\title{Worst Unstable Points of a Hilbert Scheme}
\author{Cheolgyu Lee}
\thanks{The author wants to thank his advisor, Donghoon Hyeon. The author was partially supported by the following grants funded by
the government of Korea: NRF grant 2011-0030044 (SRC-GAIA) and
 NRF grant NRF-2013R1A1A2010649.}
\address[CL]{Department of Mathematics, POSTECH, Pohang, Gyungbuk 790-784, R. O. Korea}
\email{ghost279@postech.ac.kr}
\date{\today}
\newtheorem{theorem}{Theorem}[section]
\newtheorem{lemma}[theorem]{Lemma}
\newtheorem{corollary}[theorem]{Corollary}
\newenvironment{definition}[1][Definition]{\begin{trivlist}
\item[\hskip \labelsep {\bfseries #1}]}{\end{trivlist}}
\newenvironment{example}[1][Example]{\begin{trivlist}
\item[\hskip \labelsep {\bfseries #1}]}{\end{trivlist}}
\newcommand{\sslash}{\mathbin{/\mkern-6mu/}}
\begin{document}
\begin{abstract}
In this paper, we describe the worst unstable points of a Hilbert scheme for some special Hilbert polynomials and ambient spaces using Murai's work on Gotzmann monomial sets. We investigate the geometry of the projective schemes represented by worst unstable Hilbert points and see that in certain cases that they fail to be $K$-stable or attain
maximal regularity.
\end{abstract}
\maketitle
\section{Introduction}
Let $k$ be an algebraically closed field and let $S=k[x_{0}, \ldots, x_{r}]$ be a polynomial ring over $k$ where $r\geq 1$. Let $P$ be the Hilbert polynomial of $S/I$ for some homogeneous ideal $I$ of $S$. In this paper, $g_{P}$ is the Gotzmann number associated to $P$ defined in \cite{Gotzmann}. For $d\geq g_{P}$ there are closed immersions
\begin{displaymath}
\textrm{Hilb}^{P}(\mathbb{P}^{r}_{k}) \hookrightarrow \textrm{Gr}(S_{d}, Q_{P}(d))\hookrightarrow \mathbb{P} \bigg(\bigwedge^{Q_{P}(d)}S_{d}\bigg)
\end{displaymath}
which are compatible with the canonical linear action of the general linear group $\textup{GL}_{r+1}(k)$ where
\begin{displaymath}
Q_{P}(d)=\binom{r+d}{r}-P(d).
\end{displaymath}
Consider the GIT quotient $\textrm{Hilb}^{P}(\mathbb{P}^{r}_{k})\sslash_{d}\textup{SL}_{r+1}(k)$ with respect to the above Pl\"ucker embedding corresponding to $d$ and another GIT-quotient $\mathbb{P}(\bigwedge^{Q_{P}(d)}S_{d})\sslash\textup{SL}_{r+1}(k)$. We have the Hesselink stratification of $\mathbb{P}(\bigwedge^{Q_{P}(d)}S_{d})$ described in \cite[p. 9]{Hoskins}. That is, there is a stratification of the unstable locus
\begin{equation}
\label{strata}
\mathbb{P}\bigg(\bigwedge^{b}S_{d}\bigg)^{\textup{us}}=\coprod_{[\lambda], d'}{E_{[\lambda], d'}^{d, b}}
\end{equation}
for all $d, b\in\mathbb{N}$. An unstable point $x$ belongs to a stratum $E^{d, b}_{[\lambda], d'}$ if the conjugacy class $[\lambda]$ contains a 1-parameter subgroup that is adapted to $x$ and the Kempf index \cite{Kempf} of $x$ is $d'$. Setting $b=Q_{P}(d)$ in \eqref{strata}, we obtain the Hesselink stratification
\begin{displaymath}
\textrm{Hilb}^{P}(\mathbb{P}^{r}_{k})^{\textrm{us}}_{d}=\coprod_{[\lambda], d'}{E_{[\lambda], d'}^{d, Q_{P}(d)}}\cap \textrm{Hilb}^{P}(\mathbb{P}^{r}_{k})
\end{displaymath} 
of the Hilbert scheme $\textrm{Hilb}^{P}(\mathbb{P}^{r}_{k})$ with respect to the Pl\"ucker embedding into $\mathbb{P}(\bigwedge^{Q_{P}(d)}S_{d})$. Now we are ready to define worst unstable points of a Hilbert scheme $\textrm{Hilb}^{P}(\mathbb{P}^{r}_{k})$ for an arbitrary choice of $r$ and $P$.
\begin{definition}
For $r, d\in\mathbb{N}$ and a Hilbert polynomial $P$, let $\Gamma(\textup{SL}_{r+1}(k))$ be the group of all 1-parameter subgroups of $\textup{SL}_{r+1}(k)$ and
\begin{displaymath}
\sigma_{\textup{max}}(r, d, P):=\max\big\lbrace \sigma\in\mathbb{R}_{>0}\big\vert\exists\lambda\in\Gamma(\textup{SL}_{r+1}(k))\textup{ s.t. } {E_{[\lambda], \sigma}^{d, Q_{P}(d)}}\cap \textrm{Hilb}^{P}(\mathbb{P}^{r}_{k})\neq \emptyset \big\rbrace.
\end{displaymath}
A point $x\in \textrm{Hilb}^{P}(\mathbb{P}^{r}_{k})$ is a worst unstable point of $\textup{Hilb}^{P}(\mathbb{P}^{r}_{k})$ with respect to $d$ if $x\in E_{[\lambda], \sigma_{\textup{max}}(r, d, P)}^{d, Q_{P}(d)}$ for some 1-parameter subgroup $\lambda$ of $\textup{SL}_{r+1}(k)$. If $x\in \textrm{Hilb}^{P}(\mathbb{P}^{r}_{k})$ is a worst unstable point of $\textup{Hilb}^{P}(\mathbb{P}^{r}_{k})$ with respect to all but finitely many nonnegative integers, then let us call $x$ as a worst unstable point of $\textrm{Hilb}^{P}(\mathbb{P}^{r}_{k})$ or a worst unstable Hilbert point, shortly. Also, let
\begin{displaymath}
\sigma'_{\textup{max}}(r, d, b):=\max\big\lbrace \sigma\in\mathbb{R}_{>0}\big\vert\exists\lambda\in\Gamma(\textup{SL}_{r+1}(k))\textup{ s.t. }{E_{[\lambda], \sigma}^{d, b}}\cap \textup{Gr}(S_{d},b)\neq \emptyset \big\rbrace
\end{displaymath}
where the Grassmannian $\textup{Gr}(S_{d}, b)$ is considered as the closed subscheme of $\mathbb{P}(\wedge^{b} S_{d})$ via the Pl\"ucker embedding. A point $y\in\textup{Gr}(S_{d}, b)$ is a worst unstable point of $\textup{Gr}(S_{d}, b)$ if $y \in E_{[\lambda], \sigma'_{\textup{max}}(r, d, b)}^{d, b}$ for some 1-parameter subgroup $\lambda$ of $\textup{SL}_{r+1}(k)$.
\end{definition}

Describing worst unstable Hilbert points is a first step to understand the geometric meaning of the Hesselink stratification above. When $r=1$ and $P$ is a constant polynomial, a hypersurface defined by a homogeneous polynomial $f$ of degree $d$ is unstable if and only if there is a root of multiplicity $m\geq d/2$, which is explained in \cite[p. 80]{GIT}. It is natural to ask if a projective scheme represented by a worst unstable point has a unique closed point if $P$ is a constant polynomial. We will show that this guess is true for arbitrary $r\geq 1$ (Theorem~\ref{constant}). On the other hand, there is a theorem on a semi-stable bi-canonical curve.
\begin{theorem}[{\cite[Corollary 4.5. and 2.5. on page 924]{Hyeon}}]
\label{hyeon}
Suppose that $\textup{char}$ $k=0$. If $\mathcal{C}\subset \mathbb{P}^{3g-4}_{k}$ is a bi-canonical curve of genus $g\geq 3$ and is semi-stable for all but finitely many choices of Pl\"ucker embeddings, then $\mathcal{O}_{\mathcal{C}}$ is 2-regular.
\end{theorem}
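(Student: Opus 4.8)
The plan is to turn the regularity claim into one cohomological vanishing and then prove that vanishing by contraposition, exhibiting a one-parameter subgroup that destabilizes the $m$-th Hilbert point for infinitely many $m$ whenever it fails.

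\textbf{Reduction to a single vanishing.} Since $\dim\mathcal{C}=1$, Grothendieck vanishing gives $H^{i}(\mathcal{O}_{\mathcal{C}}(k))=0$ for all $i\geq 2$, so by definition the Castelnuovo--Mumford $2$-regularity of $\mathcal{O}_{\mathcal{C}}$ is equivalent to the single vanishing $H^{1}(\mathcal{C},\mathcal{O}_{\mathcal{C}}(1))=0$, which Mumford's lemma then propagates to $H^{1}(\mathcal{O}_{\mathcal{C}}(k))=0$ for all $k\geq 1$. A bicanonical curve has $\mathcal{O}_{\mathcal{C}}(1)$ of degree $4g-4$ and arithmetic genus $g$, so $P(t)=(4g-4)t+1-g$ and $P(1)=3g-3=h^{0}(\mathbb{P}^{3g-4},\mathcal{O}(1))$. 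By Riemann--Roch the desired vanishing is equivalent to $h^{0}(\mathcal{O}_{\mathcal{C}}(1))=3g-3$, i.e.\ to the restriction map $H^{0}(\mathbb{P}^{3g-4},\mathcal{O}(1))\to H^{0}(\mathcal{C},\mathcal{O}_{\mathcal{C}}(1))$ being an isomorphism --- linear normality in degree one. Equivalently, via $0\to\mathcal{I}_{\mathcal{C}}(1)\to\mathcal{O}(1)\to\mathcal{O}_{\mathcal{C}}(1)\to 0$, I must rule out both $H^{0}(\mathcal{I}_{\mathcal{C}}(1))\neq 0$ (degeneracy) and, in the nondegenerate case, $h^{1}(\mathcal{I}_{\mathcal{C}}(1))=h^{1}(\mathcal{O}_{\mathcal{C}}(1))>0$.

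\textbf{Hilbert--Mumford setup and the degenerate case.} To a normalized one-parameter subgroup $\lambda=\mathrm{diag}(a_{0},\dots,a_{r})$, $\sum a_{i}=0$, acting on $S_{1}$, the $m$-th Hilbert point $[\bigwedge^{Q(m)}I_{m}]$ assigns the flat limit $\mathrm{in}_{\lambda}(I)$ and the Mumford weight $\mu_{m}(\lambda)$, the normalized total $\lambda$-weight read off from $(S/I)_{m}$; semistability of that point means $\mu_{m}(\lambda)\geq 0$ for every $\lambda$, and the hypothesis is that this holds for all but finitely many $m$. If $H^{0}(\mathcal{I}_{\mathcal{C}}(1))\neq 0$, then $\mathcal{C}$ lies in a hyperplane, and the coordinate one-parameter subgroup adapted to that hyperplane destabilizes, giving $\mu_{m}(\lambda)<0$ for all $m$; so $\mathcal{C}$ is unstable, contradicting the hypothesis. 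Hence $\mathcal{C}$ is nondegenerate and it remains to treat $s:=h^{1}(\mathcal{O}_{\mathcal{C}}(1))>0$.

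\textbf{The heart: the nondegenerate irregular case.} Here I would use Mumford's asymptotic expansion of the weight: for a fixed $\lambda$ the function $\mu_{m}(\lambda)$ has leading term (order $m^{2}$) the Chow weight of the cycle, together with a subleading term governed by the filtered Euler characteristics, hence by $\sum_{k}\bigl(h^{0}(\mathcal{O}_{\mathcal{C}}(k))-P(k)\bigr)=\sum_{k}h^{1}(\mathcal{O}_{\mathcal{C}}(k))$. The strategy is to choose $\lambda$ adapted to a flag of hyperplanes that realizes the $s$-dimensional cokernel of $H^{0}(\mathcal{O}(1))\to H^{0}(\mathcal{O}_{\mathcal{C}}(1))$ --- so that the order-of-vanishing filtration of the spaces $H^{0}(\mathcal{O}_{\mathcal{C}}(m))$ carries the excess produced by the extra sections --- and to show that the resulting subleading contribution is strictly negative and does not decay, forcing $\mu_{m}(\lambda)<0$ for all large $m$. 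This contradicts semistability for all but finitely many $m$ and closes the contrapositive.

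\textbf{Main obstacle.} I expect the crux to be exactly this uniform asymptotic estimate: the leading Chow term is insensitive to regularity, so everything hinges on controlling the subleading term well enough to pin its sign for infinitely many $m$ with a single $\lambda$, and on attributing that sign cleanly to $h^{1}(\mathcal{O}_{\mathcal{C}}(k))$ for small $k$. A second difficulty is uniformity over the fibre of the Hilbert scheme: $\mathcal{C}$ ranges over all flat limits with Hilbert polynomial $P$ --- singular, reducible, or nonreduced --- so the destabilizing flag and the cohomological bookkeeping must be produced without smoothness and without assuming $\mathcal{O}_{\mathcal{C}}(1)$ is the square of a genuine dualizing sheaf.
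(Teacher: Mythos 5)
There is no in-paper proof to compare against here: the paper states this theorem purely as a quotation of results from the Hassett--Hyeon reference it credits, and uses it only as motivation for its regularity questions, so your proposal has to stand entirely on its own. Its first two steps do stand. Since $\dim\mathcal{C}=1$, $2$-regularity of $\mathcal{O}_{\mathcal{C}}$ is indeed equivalent to the single vanishing $H^{1}(\mathcal{C},\mathcal{O}_{\mathcal{C}}(1))=0$, and your Riemann--Roch bookkeeping (using the bicanonical numerics $\chi(\mathcal{O}_{\mathcal{C}}(1))=3g-3=h^{0}(\mathbb{P}^{3g-4},\mathcal{O}(1))$) correctly splits the contrapositive into a degenerate case and a nondegenerate case with $s=h^{1}(\mathcal{O}_{\mathcal{C}}(1))>0$; the degenerate case is then disposed of by the standard fact that a subscheme contained in a hyperplane is Hilbert-unstable for every $m$.

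The genuine gap is the step you yourself label the heart and, candidly, the main obstacle. In the nondegenerate irregular case you never produce a destabilizing one-parameter subgroup: you announce the intention to choose $\lambda$ adapted to a flag realizing the $s$-dimensional cokernel of restriction and to show that the resulting subleading weight contribution is negative and non-decaying. Constructing that flag on an arbitrary point of the Hilbert scheme (reducible, nonreduced, with no dualizing-sheaf interpretation of $\mathcal{O}_{\mathcal{C}}(1)$), computing the weight polynomial $\mu_{m}(\lambda)$, and pinning the sign of its linear coefficient \emph{is} the content of the theorem, and none of it is carried out. Two concrete reasons this step cannot be waved through: (i) your description of the subleading term as governed by $\sum_{k}\left(h^{0}(\mathcal{O}_{\mathcal{C}}(k))-P(k)\right)$ is not what the asymptotic expansion actually gives --- the $m$-linear coefficient involves the $\lambda$-weight filtration on $H^{0}(\mathcal{O}_{\mathcal{C}}(m))$ of the degeneration, and converting that into $h^{1}$ in low degrees is exactly the missing estimate; (ii) the qualitative implication ``semistable, nondegenerate, $h^{1}(\mathcal{O}_{\mathcal{C}}(1))>0$ leads to contradiction'' is false without precise numerical input: a smooth non-hyperelliptic canonical curve $C\subset\mathbb{P}^{g-1}$ is nondegenerate and Hilbert semistable for all large $m$, yet $h^{1}(\mathcal{O}_{C}(1))=h^{1}(\omega_{C})=1$, so $\mathcal{O}_{C}$ is not $2$-regular. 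What saves the bicanonical case is exactly the quantitative interplay of degree $4g-4$ against ambient dimension $3g-4$ inside the weight estimate, and your sketch defers precisely that. As written, the proposal is a reasonable Gieseker-style plan of attack, not a proof.
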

The preceding theorem means that there is an upper bound for Castelnuovo-Mumford regularity $\textup{reg }\mathcal{O}_{\mathcal{C}}\leq 2$ for asymptotically semi-stable bi-canonical curves $\mathcal{C}\subset\mathbb{P}^{3g-4}_{k}$ of genus $g\geq 3$. Since $\textup{reg }\mathcal{O}_{C}\leq \textup{reg }\mathcal{I}_{C}$, it is reasonable to guess that every asymptotically worst unstable point attains maximal Castelnuovo-Mumford regularity, as lex-segment ideals do \cite[(2.9)]{Gotzmann}. Actually, we will show that this guess is true in the case of plane curves, which does not hold for arbitrary $r$ and $P$ (Theorem~\ref{regularity}).

We first try to describe the worst unstable points of $\textup{Gr}(S_{d}, Q_{P}(d))$ for $d\gg 0$ using the asymptotic behavior (Lemma~\ref{truncate1}) of some functions associated to the Hilbert polynomial P (Lemma~\ref{opt1}, Lemma~\ref{general1}). Also, we prove that these points have constant Hilbert polynomial. That is, a worst unstable point of $\textup{Gr}(S_{d}, Q_{P}(d))$ belongs to $\textrm{Hilb}^{P}(\mathbb{P}^{r}_{k})$ if and only if $P$ is constant (Theorem~\ref{useless}).
After that, we use \cite[Proposition 8]{Murai} to describe worst unstable Hilbert points when 
\begin{equation}
\label{goodsit}
P(d)=\binom{r+d}{r}-\binom{r+d-\gamma}{r}+p
\end{equation}
for some $p, \gamma\in\mathbb{N}$ (Theorem~\ref{constant}, Theorem~\ref{opt2}). Note that \eqref{goodsit} is always true when $r=2$. As we described above, we need to choose a Pl\"ucker embedding to define the Hesselink stratification of a Hilbert scheme $\textup{Hilb}^{P}(\mathbb{P}^{r}_{k})$. However, we will show that the set of all worst unstable Hilbert points with respect to $d$ remains unchanged for all but finitely many choices of $d$ when \eqref{goodsit} is true (Theorem~\ref{unchanged}). Furthermore, these worst unstable Hilbert points are Borel-fixed, so that it is possible to compute the Castelnuovo-Mumford regularity of an arbitrary worst unstable point of $\textup{Hilb}^{P}(\mathbb{P}_{k}^{r})$, which only depends on $r$ and $P$ (Theorem~\ref{regularity}). Under the condition \eqref{goodsit}, we will see that the Hilbert polynomial $P$ satisfies one of the following properties
\begin{itemize}
\item $r=2$ and $\gamma\neq 0$
\item $p\leq 2$
\end{itemize}
if and only if a worst unstable Hilbert point of $\textup{Hilb}^{P}(\mathbb{P}_{k}^{r})$ attains the maximal regularity $g_{P}$ in the last paragraph of this paper.

 By \cite[Section 2.3]{Donaldson}, asymptotic GIT stability and $K$-stability are closely related. The worst unstable Hilbert points described in this paper are worst points in the sense of asymptotic Hilbert-Mumford stability. The value of the equation on \cite[line 27, p. 11]{GTian} for a fixed adapted 1-parameter subgroup $\lambda$ of a worst unstable Hilbert point $x$ of $\textup{Hilb}^{P}(\mathbb{P}_{k}^{r})$ has to be {\it maximal} at $x$ for all $n\gg 0$. The dimension of the fiber of a maximal value under the function which maps each Hilbert point to corresponding $F_{0}(\lambda)$ \cite[p. 11]{GTian} can be large enough so that we can expect that the projective scheme represented by an arbitrary worst unstable Hilbert point may have the largest $F_{0}(\lambda)$ and $F_{1}(\lambda)$ (which is a Donaldson-Futaki invariant defined in \cite[p. 12]{GTian}) so that such a scheme may not be $K$-stable. We will compute some Donaldson-Futaki invariants of the projective schemes represented by the worst unstable points described in this paper and their associated 1-parameter subgroup using the asymptotic behavior of the numerical functions we have found. In this way, we will see that the projective scheme represented by a worst unstable Hilbert point is not $K$-stable if the Hilbert polynomial is in the form \eqref{goodsit} with the assumption $\gamma\neq 1$ in Theorem~\ref{Kinstability}.
\section{Preliminaries and Details in Computation}
\subsection{Castelnuovo-Mumford regularity and Gotzmann theorems}
A coherent sheaf $\mathcal{F}$ on $\mathbb{P}^{r}_{k}$ is $m$-regular if
\begin{displaymath}
\textup{H}^{i}(\mathbb{P}^{r}_{k}, \mathcal{F}(m-i))=0
\end{displaymath}
for all $i>0$. Let $\textup{reg}(\mathcal{F}):=\min\{m\in\mathbb{Z}|\mathcal{F}\textrm{ is }m\textrm{ regular}\}$. For an arbitrary graded $S$-module $M$, the regularity of $M$, denoted $\textup{reg}~M$ is defined to be the least integer $m$ satisfying
\begin{displaymath}
\textup{Ext}^{i}(M, S)_{j}=0\quad\textrm{ for all }i+j<-m.
\end{displaymath}
Equivalently, $\textup{reg}~M=\textup{max}_{j\geq 0} (b_{j}-j)$ where $b_{j}$ is the maximal degree of a minimal generator of $F_{i}$ for a minimal free resolution $\{F_{i}\}_{i=0}^{\infty}$ of $M$ \cite[20.5]{Eisenbudf}. There is an ideal sheaf $\tilde{I}$ on $\textup{Proj }S\cong \mathbb{P}_{k}^{r}$ associated to $I$ when $I$ is a graded ideal of $S$. There is an equality
\begin{displaymath}
\textup{reg}~I=\textup{reg}~\tilde{I}
\end{displaymath}
for every saturated graded ideal $I$ of $S$, as stated in \cite[Chapter 4]{Eisenbud}. The Gotzmann number $g_{P}$ (which is equal to $m(Q_{P})$ under the notation of \cite[p. 62]{Gotzmann}) is defined by the equation
\begin{displaymath}
\begin{split}
g_{P}:=&\max\left\{\textup{reg}(I)| I\textrm{ is a saturated graded ideal of }S,\right.\\
     &\qquad\qquad\quad\left.\textrm{ the Hilbert polynomial of } S/I\textrm{ is }P \right\}.
\end{split}
\end{displaymath}

Gotzmann's regularity theorem \cite[(2.9)]{Gotzmann} implies that lex-segment ideals whose Hilbert polynomial is $Q_{P}$ attain maximal regularity $g_{P}$. Actually, every Hilbert polynomial $Q_{P}$ of a graded ideal has a Macaulay representation \cite[(2.5)]{Gotzmann} of the form
\begin{displaymath}
Q_{P}(d)=\sum_{i=0}^{n}\binom{r-i+d-a_{i}}{r-i}
\end{displaymath}
for an integer $0\leq n<r$ and $n+2$ integers $\{a_{i}\}_{i=-1}^{n}$ satisfying
\begin{displaymath}
1=a_{-1}\leq a_{0}\leq \ldots \leq a_{n}.
\end{displaymath}
Let $M_{d}$ be the set of all monomials in $S_{d}$ and fix the lexicographic ordering $\leq_{\textup{lex}}$ on $M_{d}$ with respect to the term order $x_{0}>x_{1}>\ldots >x_{r}$. Let $I$ be the ideal generated by monomials greater than or equal to
\begin{displaymath}
\mu=x_{0}^{a_{0}-1}x_{1}^{a_{1}-a_{0}}\ldots x_{n}^{a_{n}-a_{n-1}+1}
\end{displaymath}
with respect to $\leq_{\textup{lex}}$. Then, $I_{d}$ is spanned by monomials greater than or equal to $\mu x_{r}^{d-a_{n}}$ with respect to $\leq_{\textup{lex}}$ for all $d\geq a_{n}$. That is,
\begin{displaymath}
I_{d}=\bigoplus_{i=0}^{n} \textup{span} \{m\in M_{d}| \textup{deg}_{x_{i}}m\geq a_{i}-a_{i-1}+1\textrm{ and }\textup{deg}_{x_{j}}m=a_{j}-a_{j-1}\textrm{ for all }j < i\}
\end{displaymath}
\begin{equation}
\label{gen}
=\bigoplus_{i=0}^{n} \left[ x_{i}^{a_{i}-a_{i-1}+1}\prod_{j=0}^{i-1}x_{j}^{a_{j}-a_{j-1}}\right] k[x_{i}, \ldots, x_{r}]_{d-a_{i}}
\end{equation}

for all $d\geq g_{P}$. Thus we can directly compute
\begin{displaymath}
Q_{P}(d)=\dim_{k}I_{d}=\sum_{i=0}^{n}\binom{r-i+d-a_{i}}{r-i}
\end{displaymath}
for all $d\geq g_{P}$. Equation \eqref{gen} implies that $I$ has a minimal generator containing $\mu$ of degree $a_{n}$. Also, $I$ is saturated. Therefore, $g_{P}\geq \textup{reg }I\geq a_{n}$. Furthermore, $g_{P}\leq a_{n}$ by \cite[(2.9)]{Gotzmann} so that $a_{n}=g_{P}$.

\begin{definition}
Let $b\in\mathbb{N}$. For $W\in\textup{Gr}(S_{d}, b)$, let $SW$ denote the ideal of $S$ generated by $W$. If $C\subset S_{d}$, we let $\textup{span}\textrm{ }C$ denote the $k$-subspace of $S_{d}$ spanned by $C$.  Define $U_{i}(n)=\{v\in M_{d}| \textrm{deg}_{x_{j}}v \leq \textrm{deg}_{x_{j}}n \textrm{ for all } j\in\{0, 1,,
\ldots, r\}\setminus\{ i\}\}$ for a monomial $n\in S_{d}$ and $i\in\{0, 1, \ldots, r\}$. A monomial ideal $J$ is said to be Borel-fixed if $\frac{x_{i}}{x_{i+1}}m$ is not a monomial or is in $J$, for all monomials $m\in J$ and $0\leq i<r$.
\end{definition}
\subsection{State polytopes and geometric invariant theory}
Let $T_{r}$ denote the maximal torus of $\textup{GL}_{r}(k)$, which consists of all diagonal matrices in $\textup{GL}_{r}(k)$. For any affine algebraic group $G$, let $X(G)$  (resp. $\Gamma(G)$) be the group of characters (resp. 1-parameter subgroups) of $G$. Consider the canonical $\textup{GL}_{r+1}(k)$-actions on a Pl\"ucker coordinate $\mathbb{P}\left(\wedge^{b} S_{d}\right)$ and its affine cone $\wedge^{b}S_{d}$, which are induced by the canonical $\textup{GL}_{r+1}(k)$-action on $S_{1}$. These actions induce a $T_{r+1}$-action on $\wedge^{b}S_{d}$, which has the character decomposition \cite[Proposition 4.14]{Mukai}
\begin{displaymath}
\wedge^{b}S_{d}=\bigoplus_{\chi\in X(T_{r+1})} \left(\wedge^{b}S_{d}\right)_{\chi}
\end{displaymath}
where
\begin{displaymath}
\left(\wedge^{b}S_{d}\right)_{\chi}=\left\{v\in \wedge^{b}S_{d}| t.v=\chi(t)v \textrm{ for all }t\in T_{r+1} \right\}.
\end{displaymath}
 Let us also fix a basis $\{\chi_{i}\}_{i=0}^{r}$ of $X(T_{r+1})$ where $\chi_{i}(t)=t_{ii}$ for all $0\leq i\leq r$ and $t\in T_{r+1}$. Then, we can easily see that $\left(\wedge^{b}S_{d}\right)_{\chi}$ is generated by
\begin{displaymath}
\left\{ \wedge_{i=1}^{b} m_{i}\Bigg\vert m_{i}\in M_{d}\textrm{ for all }i\textrm{ and }\prod_{i=1}^{b}m_{i}=\prod_{j=0}^{r} x_{j}^{d_{j}} \right\}
\end{displaymath}
if $\chi=\prod_{j=0}^{r}\chi_{j}^{d_{j}}$. It follows that $\sum_{j=0}^{r}d_{i}=db$ if $\left(\wedge^{b}S_{d}\right)_{\chi}\neq 0$. There is a basis $\{\lambda_{i}\}_{i=0}^{r}$ of $\Gamma(T_{r+1})$ which is the dual basis of $\{\chi_{i}\}_{i=0}^{r}$ with respect to the pairing $\langle ,\rangle:X(T_{r+1})\times\Gamma(T_{r+1})\rightarrow \mathbb{Z}$ which satisfies
\begin{displaymath}
\chi(\lambda(t))=t^{\langle \chi, \lambda \rangle}
\end{displaymath}
for all $t\in k\setminus \{0\}$. We can identify $X(T_{r+1})\cong \mathbb{Z}^{r+1}\cong \Gamma(T_{r+1})$ under this choice of basis. Let $\Vert\cdot\Vert$ denote the Euclidean norm on $X(T_{r+1})\cong \mathbb{Z}^{r+1}\cong \Gamma(T_{r+1})$ with respect to the basis $\{\chi_{i}\}_{i=0}^{r}$ of $X(T_{r+1})$. There is a norm $\Vert\cdot\Vert_{\mathbb{R}}$ on $X(T_{r+1})_{\mathbb{R}}=X(T_{r+1})\otimes_{\mathbb{Z}}\mathbb{R}\cong \mathbb{R}^{r+1}\cong \Gamma(T_{r+1})\otimes_{\mathbb{Z}}\mathbb{R}=\Gamma(T_{r+1})_{\mathbb{R}}$ induced by $\Vert\cdot\Vert$. There is also a pairing $\langle, \rangle_{\mathbb{R}}:X(T_{r+1})_{\mathbb{R}}\times\Gamma(T_{r+1})_{\mathbb{R}}\rightarrow \mathbb{R}$ obtained from $\langle\textrm{ },\textrm{ }\rangle$ by the base change to $\mathbb{R}$. 
\begin{definition}
An arbitrary $v\in\wedge^{b}S_{d}$ has a decomposition
\begin{displaymath}
v=\sum_{\chi\in X(T_{r+1})} v_{\chi}
\end{displaymath}
with $v_{\chi}\in \left(\wedge^{b}S_{d}\right)_{\chi}$. The state $\Xi_{[v]}$of the line $[v]\in \mathbb{P}(\wedge^{b}S_{d})$ through the origin and $v$ is the set
\begin{displaymath}
\Xi_{[v]}=\left\{\chi\in X(T_{r+1})| v_{\chi}\neq 0 \right\}.
\end{displaymath}
The state polytope $\Delta_{[v]}$ of $[v]$ is the convex hull of $\Xi_{[v]}\otimes_{\mathbb{Z}}1$ in $X(T_{r+1})_{\mathbb{R}}$.
\end{definition}

 For example, $\Delta_{[v]}$ is a point if $v$ is a wedge of monomials. Let $\xi_{d, b}=\frac{db}{r+1}\mathbbm{1}$ where $\mathbbm{1}$ is the all-$1$ vector of $X(T_{r+1})_{\mathbb{R}}$ with respect to the  basis $\{\chi_{i}\}_{i=0}^{r}$. If $L$ is the  line bundle on $\textup{Hilb}^{P}(\mathbb{P}_{k}^{r})$ defined by the Pl\"ucker embedding corresponding to $d$, $L$ admits a canonical linearization by the canonical $\textup{SL}_{r+1}(k)$-action and $L^{\otimes (r+1)}$ also can be linearized by a similar way \cite[p. 33]{GIT}. We can see that the induced $\textup{GL}_{r+1}(k)$-linearization on $L^{\otimes(r+1)}$ twisted by the $dQ_{P}(d)$'th power of the determinant function on $\textup{GL}_{r+1}(k)$ (whose restriction on $T_{r+1}$ is the character corresponding to $(r+1)\xi_{d, Q_{P}(d)}$) and $\textup{SL}_{r+1}(k)$-linearization on $L$ are equivalent in the sense of GIT; they defines the same stable locus, semi-stable locus, GIT-quotients and same numerical weight functions on $\Gamma(\textup{SL}_{r+1}(k))$ up to constant \cite[Definition 2.2, p. 49]{GIT}. Note that $\xi_{d, b}$ is the arithmetic mean of the set $\{\chi\in X(T_{r+1})| \left( \wedge^{b} S_{d} \right)_{\chi}\neq 0\}\otimes_{\mathbb{Z}}1$. For each $v\in \wedge^{b}S_{d}$ satisfying $\xi_{d, b}\notin\Delta_{[v]}$, there is a unique $\lambda_{[v]}\in\Gamma(T_{r+1})$ which satisfies the following properties.
\begin{itemize}
\item There is $u\in \mathbb{R}^{+}$ such that $\lambda_{[v]}\otimes_{\mathbb{Z}}u+\xi_{d, b}\in\Delta_{[v]}$ via the isomorphism $X(T_{r+1})\cong\Gamma(T_{r+1})$ we defined above and
$\Vert\lambda_{[v]}\otimes_{\mathbb{Z}}u\Vert_{\mathbb{R}}$ is equal to the distance from $\xi_{d, b}$ to $\Delta_{[v]}$.
\item There is no $m\in\mathbb{N}\setminus\{0, 1\}$ and $\lambda\in\Gamma(T_{r+1})$ such that $m\lambda=\lambda_{[v]}$.
\end{itemize}
The image of such a 1-parameter subgroup $\lambda_{[v]}$ is contained in $\textup{SL}_{r+1}(k)$ because the sum of all coefficients of $\lambda_{[v]}$ is 0. 
\begin{definition}
Let $|\Delta_{[v]}|_{0}$ be the distance from $\xi_{d, b}$ to $\Delta_{[v]}$. The set of state polytopes $\{\Delta_{g.[v]}|g\in \textup{GL}_{r+1}(k)\}$ determines the {\it state} of $[v]$ from the viewpoint of geometric invariant theory.
\end{definition}
\begin{theorem}
\label{basic}
For an arbitrary $v\in\wedge^{b}S_{d}$, there is $g\in\textup{GL}_{r+1}$ such that
\begin{displaymath}
|\Delta_{g.[v]}|_{0}=\max_{h\in\textup{GL}_{r+1}(k)} |\Delta_{h.[v]}|_{0}.
\end{displaymath}
For such $g$, $[v]\in E_{[\lambda_{g.[v]}], |\Delta_{g.[v]}|_{0}}^{d, b}$ if $|\Delta_{g.[v]}|_{0}>0$. Otherwise, $[v]$ is semi-stable.
\end{theorem}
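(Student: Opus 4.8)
The plan is to translate the three assertions into the language of the Hilbert--Mumford numerical function and then feed the result into Kempf's theory. The starting observation is that for a torus one-parameter subgroup $\lambda\in\Gamma(T_{r+1})$ whose image lies in $\textup{SL}_{r+1}(k)$ one has $\langle\xi_{d,b},\lambda\rangle_{\mathbb{R}}=\tfrac{db}{r+1}\langle\mathbbm{1},\lambda\rangle_{\mathbb{R}}=0$, so that $\min_{\chi\in\Xi_{[v]}}\langle\chi-\xi_{d,b},\lambda\rangle_{\mathbb{R}}$ is, up to sign, exactly the Hilbert--Mumford weight $\mu([v],\lambda)$ of \cite{GIT}. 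First I would record the purely convex-geometric identity
\begin{displaymath}
\max_{0\neq\lambda\in\Gamma(T_{r+1})_{\mathbb{R}}}\frac{\min_{\chi\in\Xi_{[v]}}\langle\chi-\xi_{d,b},\lambda\rangle_{\mathbb{R}}}{\Vert\lambda\Vert_{\mathbb{R}}}=|\Delta_{[v]}|_{0},
\end{displaymath}
together with the fact that the maximizing direction is $\lambda_{[v]}$. This is the nearest-point projection theorem: if $p^{*}\in\Delta_{[v]}$ is the point closest to $\xi_{d,b}$, the supporting inequality $\langle p-p^{*},\,p^{*}-\xi_{d,b}\rangle_{\mathbb{R}}\geq 0$ valid for all $p\in\Delta_{[v]}$ shows that the primitive integral vector in the direction $p^{*}-\xi_{d,b}$, namely $\lambda_{[v]}$, realizes the supremum with value $\Vert p^{*}-\xi_{d,b}\Vert_{\mathbb{R}}=|\Delta_{[v]}|_{0}$, while Cauchy--Schwarz bounds every other direction from above. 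This already yields the last sentence: if $\max_{h}|\Delta_{h.[v]}|_{0}=0$ then $\xi_{d,b}\in\Delta_{h.[v]}$ for every $h$, so $\mu([h.v],\lambda)\geq 0$ for every torus $\lambda$ and every $h$; conjugating an arbitrary one-parameter subgroup into $T_{r+1}$ then gives $\mu([v],\rho)\geq 0$ for all $\rho$, which by the Hilbert--Mumford criterion is precisely semistability of $[v]$.

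Next I would pass from the fixed torus to all of $\textup{GL}_{r+1}(k)$. Since every one-parameter subgroup is conjugate into $T_{r+1}$ and the Euclidean norm $\Vert\cdot\Vert_{\mathbb{R}}$ is invariant under the Weyl group, it extends to a conjugation-invariant norm on all one-parameter subgroups; combined with $\mu([g.v],\,g\lambda g^{-1})=\mu([v],\lambda)$ this gives
\begin{displaymath}
\max_{h\in\textup{GL}_{r+1}(k)}|\Delta_{h.[v]}|_{0}=\sup_{0\neq\rho}\frac{-\mu([v],\rho)}{\Vert\rho\Vert_{\mathbb{R}}},
\end{displaymath}
the right-hand side being Kempf's normalized measure of instability. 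The existence of a maximizing $g$ (the first assertion) is then exactly Kempf's existence theorem \cite{Kempf}: the supremum is attained by an optimal one-parameter subgroup, unique up to its associated parabolic, and conjugating that subgroup into $T_{r+1}$ produces the required $g$. By construction the torus-optimal direction for $g.v$ is $\lambda_{g.[v]}$.

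It then remains to match the two stratum indices. Tracing the conjugation through the previous identity, the adapted one-parameter subgroup of $[v]$ is $g^{-1}\lambda_{g.[v]}\,g$, which lies in the conjugacy class $[\lambda_{g.[v]}]$, so the first index of the Hesselink stratum is correct; and under the convention for the Kempf index adopted in \cite{Hoskins} (the normalized optimal value) the second index of $[v]$ equals $|\Delta_{g.[v]}|_{0}$. Hence $[v]\in E_{[\lambda_{g.[v]}],\,|\Delta_{g.[v]}|_{0}}$ whenever this distance is positive, which is the middle assertion.

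I expect the main obstacle to be the first assertion, the attainment of the maximum over the non-compact group $\textup{GL}_{r+1}(k)$: this is not a compactness argument but genuinely the content of Kempf's existence and uniqueness-up-to-parabolic theorem. The two delicate points in making the reduction rigorous are ensuring that the chosen Euclidean norm is conjugation invariant, so that ``distance from $\xi_{d,b}$ to the state polytope'' is a well-defined $\textup{GL}_{r+1}(k)$-invariant optimization, and checking that the sign conventions for $\mu$ and the definition of the Kempf index in \cite{Hoskins} are aligned with $|\Delta_{g.[v]}|_{0}$ as normalized here.
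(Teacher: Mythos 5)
Your proposal is correct and takes essentially the same route as the paper: the paper's entire proof is the citation ``See \cite{Kempf}, \cite{Hesselink} and \cite[p. 12]{VGIT}'', i.e.\ it delegates exactly to Kempf's existence theorem for optimal one-parameter subgroups and to the Hesselink stratification conventions that you invoke. The only difference is that you make explicit the convex-geometric dictionary (distance from $\xi_{d,b}$ to the state polytope equals the normalized instability measure, via nearest-point projection and conjugation-invariance of the norm), which the paper leaves entirely to those references.
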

\begin{proof}
A generalized version of this theorem can be found in \cite[1.1.4. and 1.1.5.]{VGIT}. See also \cite{Kempf}, \cite{Hesselink} and \cite{Ian}.
\end{proof}

\subsection{Computation of worst unstable points}
For an arbitrary $v\in\wedge^{b}S_{d}$, let $|\Delta_{[v]}|$ denote the distance from the origin of $X(T_{r+1})_{\mathbb{R}}$ to $\Delta_{[v]}$. Since $\Delta_{[v]}\subset H_{d, b}:=\{w\in X(T_{r+1})_{\mathbb{R}}|\langle w, \mathbbm{1}\otimes_{\mathbb{Z}}1\rangle=db\}$ and $\xi_{d, b}$ is the point on $H_{d, b}$ closest to the origin, $|\Delta_{[v]}|^{2}=|\Delta_{[v]}|_{0}^{2}+\Vert\xi_{d, b}\Vert^{2}_{\mathbb{R}}=|\Delta_{[v]}|_{0}^{2}+\frac{d^{2}b^{2}}{r+1}$. Therefore, it is enough to consider the optimization problem on $|\Delta_{[v]}|$ to describe worst unstable points.
\begin{definition}
Define 
\begin{displaymath}
R(d, b, S)=\{W\in \textup{Gr}(S_{d}, b)|W\textrm{ is generated by monomials}\},
\end{displaymath}
\begin{displaymath}
Z_{d}^{b}(S)=\{W\in R(d, b, S)|\textrm{ }|\Delta_{W}|\geq |\Delta_{W'}|\textup{ for all }W'\in R(d, b, S)\}
\end{displaymath}
and
\begin{displaymath}
\begin{split}
X_{d}^{P}(S)=&\left\{W\in R(d, Q_{P}(d), S)\cap \textup{Hilb}^{P}(\mathbb{P}^{r}_{k})\big\vert\right.\\
&\quad\left.\textrm{ }|\Delta_{W}|\geq |\Delta_{W'}|\textup{ for all }W'\in R(d, Q_{P}(d), S)\cap \textup{Hilb}^{P}(\mathbb{P}^{r}_{k}) \right\}.
\end{split}
\end{displaymath}
We can see that both $Z_{d}^{b}(S)$ (resp. $X_{d}^{P}(S)$) and $\textup{GL}_{r+1}(k).Z_{d}^{b}(S)$ (resp. $\textup{GL}_{r+1}(k).$ $X_{d}^{P}(S)$) are closed subschemes of $\textup{Gr}(S_{d}, b)$ (resp. $\textup{Hilb}^{P}(\mathbb{P}_{k}^{r})$) under some scheme structure using the argument explained in \cite[(6.1)(c), (6.2)(b)]{Hesselink}.  Let $\log {W}\in X(T_{r+1})_{\mathbb{R}}$ denote the lattice points satisfying $\Delta_{W}=\{\log{W}\}$ for an arbitrary $W\in R(d, b, S)$.
\end{definition}

For all $d, b\in \mathbb{N}$ and $W\in R(d, b,S)$, let $N(W)$ denote the monomial basis of $W$. Let $W^{\star}\in\textup{Gr}(S_{d}, b')$ be the $k$-subspace of $S_{d}$ generated by $M_{d}\setminus N(W)$ where $b'=\binom{r+d}{r}-b$. If $W\in R(d, b, S)$ then $\log {W}$ records the exponent of a monomial
\begin{displaymath}
\prod_{n\in N(W)}n.
\end{displaymath}

Now we are ready to state a theorem on a construction of the set of worst unstable points from from our $Z_{d}^{b}(S)$ and $X_{d}^{P}(S)$.

\begin{theorem}
\label{foundation}
Fix a Hilbert scheme $\textup{Hilb}^{P}(\mathbb{P}^{r}_{k})$ and the Pl\"ucker embedding  corresponding to an integer $d\geq g_{P}$.
Every worst unstable point of $\textup{Gr}(S_{d}, b)$ (resp. $\textup{Hilb}^{P}(\mathbb{P}^{r}_{k})$ with respect to $d$) is in the orbit of some element in $Z_{d}^{b}(S)$ (resp. $X_{d}^{P}(S)$). In particular, the set of all worst unstable points of $\textup{Gr}(S_{d}, b)$ (resp. $\textup{Hilb}^{P}(\mathbb{P}_{k}^{r})$ with respect to $d$) is a closed subscheme of $\textup{Gr}(S_{d}, b)$ (resp. $\textup{Hilb}^{P}(\mathbb{P}_{k}^{r})$) under some scheme structure.
\end{theorem}
\begin{proof}
It is clear by the construction of $Z_{d}^{b}(S)$ and Theorem~\ref{basic} that an arbitrary point in $\textup{GL}_{r+1}(k).Z^{b}_{d}(S)$ is a worst unstable point of $\textup{Gr}(S_{d}, b)$. Conversely, any worst unstable point of $\textup{Gr}(S_{d}, b)$ is in the orbit of $W\in\textup{Gr}(S_{d}, b)$ such that the cardinality of $\Delta_{W}$ is $1$. Otherwise, we can find $v(g)\in R(d, b, S)$ satisfying $|\Delta_{v(g)}|>|\Delta_{g.W}|$ for an arbitrary choice of $g\in\textup{GL}_{r+1}(k)$ and such a conclusion contradicts the maximality of $\max_{g\in\textup{GL}_{r+1}(k)}|\Delta_{g.W}|$. Fixing any total order of the monomial basis of $S_{d}$, $W$ is the wedge of all row vectors in a matrix in echelon form. It directly follows that $W\in R(d, b, S)$ if the cardinality of $\Delta_{W}$ is $1$. Therefore, $\textup{GL}_{r+1}(k).Z^{b}_{d}(S)$ is equal to the set of worst unstable points of $\textup{Gr}(S_{d}, b)$ as sets. Let $I$ be a worst unstable Hilbert point of $\textup{Hilb}^{P}(\mathbb{P}^{r}_{k})$ with respect to $d$. We may consider $I$ as a saturated graded ideal of $S$, whose Hilbert polynomial is $Q_{P}$ as an $S$-module. Without loss of generality, assume that $|\Delta_{I}|=\max_{g\in\textup{GL}_{r+1}(k)} |\Delta_{g.I}|$. If the cardinality of $\Delta_{I}$ is not equal to $1$, then we can choose a vertex $x$ of $\Delta_{I}$, satisfying $\Vert x \Vert_{\mathbb{R}}>|\Delta_{I}|$. Using the proof of \cite[Theorem 3.1.]{Bayer}, we can show that there is a monomial order $\prec$ on $S_{d}$, such that $\Delta_{\textup{in}_{\prec}I}=\{x\}$. The Hilbert polynomial of $\textup{in}_{\prec}I$ is equal to $Q_{P}$ and $|\Delta_{\textup{in}_{\prec}I}|:=\Vert x \Vert_{\mathbb{R}}>|\Delta_{I}|$ so that $I$ is not a worst unstable point of $\textup{Hilb}^{P}(\mathbb{P}_{k}^{r})$ with respect to $d$ by Theorem~\ref{basic}, which contradicts the assumption on $I$, so that the cardinality of $\Delta_{I}$ is equal to $1$. The remaining claims easily follow from the construction of $X_{d}^{P}(S)$ and Theorem~\ref{basic}.  
\end{proof}
From now on, we will concentrate on the computation of $Z_{d}^{b}(S)$ and $X_{d}^{P}(S)$. Suppose $W\in R(d, Q_{P}(d), S)$ and $\log {W}=(c_{0}, c_{1}, \ldots, c_{r})$. Then $\sum_{i=0}^{r}c_{i}=dQ_{P}(d)$ from the definition. Also, we can derive $|\Delta_{W}|^{2}=\sum_{i=0}^{r}c_{i}^{2}$ and
\begin{displaymath}
\prod_{n\in N_{W}} n=\prod_{i=0}^{r} x_{i}^{c_{i}}.
\end{displaymath}
The function
\begin{displaymath}
f(c_{0}, c_{1}, \ldots, c_{r})=\sum_{i=0}^{r}c_{i}^{2}
\end{displaymath}
defined on the set $\{\{c_{i}\}_{i=0}^{r}\in (\mathbb{R}^{+})^{r+1}|\sum_{i=0}^{r}c_{i}=dQ_{P}(d)\}$ has a unique minimum at $\frac{dQ_{P}(d)}{r+1}\mathbbm{1}$ by the convexity of $f$ and has a maximum at $(dQ_{P}(d), 0, \ldots, 0)$. Therefore, it is natural to guess that $W$ maximalizing $\max_{0\leq i\leq r} c_{i}$ also maximalizes $|\Delta_{W}|$. It is straightforward to check that all lex-segment ideals maximalize $\max_{0\leq i\leq r} c_{i}$. However, it is not true that the orbit of a lex-segment ideal is the set of all worst unstable Hilbert points. 
\begin{example}
Let $S=k[x, y, z]$, $P(d)=3$ and $d=3$ so that $r=2$ and $Q_{P}(d)=7$. It is true that the orbit of $I=\langle x^{3}, x^{2}y, x^{2}z \rangle$ is the set of all worst unstable points of the Grassmannian $\textup{Gr}(S_{3}, 3)\subset\mathbb{P}(S_{3}\wedge S_{3}\wedge S_{3})$ because such a choice of $I$ maximalizes the first coordinate of $\log{I_{3}}$. Indeed,
\begin{displaymath}
(a+1)^{2}+(b+1)^{2}+(c+1)^{2}\leq (a+b+c+1)^{2}+1+1=51
\end{displaymath}
for all $a, b, c\geq 0$ satisfying $a+b+c=6$ and equality holds if and only if two among $a$, $b$ and $c$ are zero. Therefore, $I_{3}$ is an element of the set
\begin{displaymath}
R(3, 3, S)\setminus \left(R(3, 3, k[x, y])\cup R(3, 3, k[y, z])\cup R(3, 3, k[z, x])\right),
\end{displaymath}
which maximalizes the function $|\Delta_{\cdot}|$. Also, we can check that
\begin{displaymath}
(a+1)^{2}+(b+3)^{2}\leq (a+b+1)^{2}+9 <51
\end{displaymath}
for all $a, b\geq 0$ satisfying $a+b=5$; this means that the orbit of $I$ is the unique worst unstable orbit of $\textup{Gr}(S_{3}, 3)$ by Theorem~\ref{foundation}.
 By the symmetry stated in Lemma~\ref{duality}, if 
\begin{displaymath}
J=\langle xy^{2}, xyz, xz^{2}, y^{3}, y^{2}z, yz^{2}, z^{3} \rangle
\end{displaymath}
then the orbit of $J_{3}$ is the set of all worst unstable points of $\textup{Gr}(S_{3}, 7)$. Moreover, $J$ has the minimal growth at degree $3$. Thus, $J_{3}\in \textrm{Hilb}^{3}(\mathbb{P}^{2}_{k})$ and it is a worst unstable Hilbert point.
\end{example}

In fact, we will generalize this observation; our computations so far can be generalized if we choose the Pl\"ucker embedding corresponding to an integer $d\gg0$.

\section{Worst unstable points of a Grassmannian containing a Hilbert scheme}

As we discussed in the previous section, there is a relation between worst unstable points of $\textup{Gr}(S_{d}, P(d))$ and worst unstable points of $\textup{Gr}(S_{d}, Q_{P}(d))$.
\begin{lemma}
\label{duality}
For $W\in R(d, P(d), S)$ we have that $W\in Z_{d}^{P(d)}(S)$ if and only if $W^{\star}\in Z_{d}^{Q_{P}(d)}(S)$ . 
\end{lemma}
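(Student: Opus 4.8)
The plan is to exploit the complementary relation between the monomial bases $N_{W}$ and $M_{d}\setminus N_{W}$, reducing both optimization problems to the same quadratic functional up to an additive constant that does not depend on $W$. First I would record that the assignment $W\mapsto W^{\star}$ is an involutive bijection between $R_{d, P(d)}(S)$ and $R_{d, Q(d)}(S)$: it sends the monomial subspace with basis $N_{W}$ (of cardinality $P(d)$) to the monomial subspace with basis $M_{d}\setminus N_{W}$ (of cardinality $\binom{r+d}{r}-P(d)=Q(d)$), and applying it twice recovers $W$. Since $Z_{d}^{b}(S)$ is defined as the set of elements of $R_{d, b}(S)$ of maximal $|\Delta_{(-)}|$, it suffices to compare the values of $|\Delta_{W}|$ and $|\Delta_{W^{\star}}|$ under this correspondence.

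Next I would compute the two exponent vectors. Because $\mathbf{c}^{W}$ is the exponent of $\prod_{n\in N_{W}}n$ and $\mathbf{c}^{W\star}$ is the exponent of $\prod_{n\in M_{d}\setminus N_{W}}n$, multiplying the two products yields $\prod_{n\in M_{d}}n$, so that
\[
\mathbf{c}^{W}+\mathbf{c}^{W\star}=\mathbf{e},
\]
where $\mathbf{e}$ is the exponent vector of the product of all monomials of degree $d$. The set $M_{d}$ is stable under every permutation of $x_{0},\ldots,x_{r}$, hence $\mathbf{e}$ is fixed by the symmetric group acting on coordinates and therefore equals $e\,\mathbbm{1}$ with $e=\frac{d}{r+1}\binom{r+d}{r}$, the total exponent sum $d\binom{r+d}{r}$ divided equally among the $r+1$ coordinates.

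The final step is the expansion. Recalling that $\Delta_{W}=\{\mathbf{c}^{W}\}$ is a single point, so $|\Delta_{W}|^{2}=\sum_{i=0}^{r}c_{i}^{2}$ for $\mathbf{c}^{W}=(c_{0},\ldots,c_{r})$, and using $\sum_{i=0}^{r}c_{i}=dP(d)$, I would compute
\[
|\Delta_{W^{\star}}|^{2}=\sum_{i=0}^{r}(e-c_{i})^{2}=(r+1)e^{2}-2e\,dP(d)+\sum_{i=0}^{r}c_{i}^{2}=|\Delta_{W}|^{2}+\big((r+1)e^{2}-2e\,dP(d)\big).
\]
The bracketed quantity depends only on $d$ and $P$, not on the choice of $W$. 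Consequently $|\Delta_{W^{\star}}|^{2}$ and $|\Delta_{W}|^{2}$ differ by a constant across the bijection $R_{d, P(d)}(S)\to R_{d, Q(d)}(S)$, so $W$ maximizes $|\Delta_{W}|$ over $R_{d, P(d)}(S)$ exactly when $W^{\star}$ maximizes $|\Delta_{W^{\star}}|$ over $R_{d, Q(d)}(S)$; that is, $W\in Z_{d}^{P(d)}(S)$ if and only if $W^{\star}\in Z_{d}^{Q(d)}(S)$.

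I do not expect a serious obstacle, as the argument is purely linear-algebraic once the symmetry of $\mathbf{e}$ is recorded. The only point requiring genuine care is confirming that the additive discrepancy is truly independent of $W$, which rests on the fact that $\sum_{i=0}^{r}c_{i}=dP(d)$ is constant over all of $R_{d, P(d)}(S)$; this is what prevents the two maximization problems from decoupling.
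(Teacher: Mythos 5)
Your proof is correct and takes essentially the same route as the paper: both arguments rest on the complementation identity $\mathbf{c}^{W}+\mathbf{c}^{W\star}=\frac{d}{r+1}\binom{r+d}{r}(\mathbbm{1}\otimes_{\mathbb{Z}}1)$ (the paper's \eqref{dualform}) together with the fact that $W\mapsto W^{\star}$ is a bijection between $R_{d,P(d)}(S)$ and $R_{d,Q(d)}(S)$. Your formulation (the uncentered squared norms shift by a $W$-independent constant) and the paper's (the centered distances from $\frac{dP(d)}{r+1}\mathbbm{1}$ and $\frac{dQ(d)}{r+1}\mathbbm{1}$ coincide) are equivalent via the Pythagorean relation $|\Delta_{W}|^{2}=|\Delta_{W}|_{0}^{2}+\frac{d^{2}P(d)^{2}}{r+1}$, and you additionally justify the identity by the permutation symmetry of $M_{d}$, a point the paper leaves implicit.
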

\begin{proof}
For any $V\in R(d, P(d), S)$, we have 
\begin{equation}
\label{dualform}
\log{V}+\log{V^\star}=\frac{d}{r+1}\binom{r+d}{r}(\mathbbm{1}
\otimes_{\mathbb{Z}}1).
\end{equation}
Also, we have the equalities
\begin{displaymath}
\begin{split}
\max_{V\in R(d, P(d), S)}&\bigg\Vert\log{V}-\frac{dP(d)}{r+1}(\mathbbm{1}
\otimes_{\mathbb{Z}}1)\bigg\Vert_{\mathbb{R}}^{2}\\
&=\max_{V\in R(d, P(d), S)}\bigg\Vert\log{V^\star}-\frac{dQ_{P}(d)}{r+1}(\mathbbm{1}
\otimes_{\mathbb{Z}}1)\bigg\Vert_{\mathbb{R}}^{2}\\
&=\max_{V\in R(d, Q_{P}, d)(S)}\bigg\Vert\log{V}-\frac{dQ_{P}(d)}{r+1}(\mathbbm{1}
\otimes_{\mathbb{Z}}1)\bigg\Vert_{\mathbb{R}}^{2}
\end{split}
\end{displaymath}
and
\begin{displaymath}
\bigg\Vert\log{W^{\star}}-\frac{dQ_{P}(d)}{r+1}(\mathbbm{1}
\otimes_{\mathbb{Z}}1)\bigg\Vert_{\mathbb{R}}=\bigg\Vert\log{W}-\frac{dP(d)}{r+1}(\mathbbm{1}
\otimes_{\mathbb{Z}}1)\bigg\Vert_{\mathbb{R}}
\end{displaymath}
using \eqref{dualform}. Then $W\in Z_{d}^{P(d)}(S)$ if and only if 
\begin{equation}
\label{dual}
 \bigg\Vert \log{W}-\frac{dP(d)}{r+1}(\mathbbm{1}
\otimes_{\mathbb{Z}}1)\bigg\Vert^{2}_{\mathbb{R}} = \max_{V\in R(d, P(d), S)}\bigg\Vert\log{V}-\frac{dP(d)}{r+1}(\mathbbm{1}
\otimes_{\mathbb{Z}}1)\bigg\Vert_{\mathbb{R}}^{2}
\end{equation} 
because $\Delta_{V}\subset \{\mathbf{v}\in  X(T)_{\mathbb{R}}|\langle\mathbf{v},(\mathbbm{1}
\otimes_{\mathbb{Z}}1)\rangle_{\mathbb{R}} =dP(d)\}$ for all $V\in \textup{Gr}(S_{d}, P(d))$. Similarly, $W^{\star}\in Z_{d}^{Q_{P}(d)}(S)$ if and only if
\begin{displaymath}
\bigg\Vert\log{W^{\star}}-\frac{dQ_{P}(d)}{r+1}(\mathbbm{1}
\otimes_{\mathbb{Z}}1)\bigg\Vert_{\mathbb{R}}^{2} = \max_{V\in R(d, Q_{P}(d), S)}\bigg\Vert\log{V}-\frac{dQ_{P}(d)}{r+1}(\mathbbm{1}
\otimes_{\mathbb{Z}}1)\bigg\Vert_{\mathbb{R}}^{2}.
\end{displaymath}
This completes the proof.
\end{proof}
We will examine the asymptotic behavior of some functions associated to a Hilbert polynomial $P$ in Lemma~\ref{truncate1}. This observation (i.e., Lemma 3.3) makes it possible to describe worst unstable points of $\textup{Gr}(S_{d}, Q_{P}(d))$ for $d\gg 0$. If $P=0$ or $Q_{P}=0$ then our problem becomes a trivial one. Let's assume that $P\neq 0\neq Q_{P}$.
\begin{definition}
Let $\mu(t, d)$ be the $t$'th greatest monomial of $M_{d}$ with respect to $\leq_{\textup{lex}}$. Let $L(t, d, S)$ be the subspace of $S_{d}$ generated by generated by $\{\mu(i, d)|1\leq i\leq t\}$ and $A(t, d, S)$ be the subspace of $S_{d}$ generated by $\{\mu(i, d)|t+1\leq i\leq \binom{r+d}{d}\}$ We may also consider $L(t, d, S)$ and $A(t, d, S)$ as points in the Grassmannian using Pl\"ucker embedding:
\begin{displaymath}
L(t, d, S)=\left[\bigwedge_{i=1}^{t}\mu(i, d)\right]\in \textup{Gr}(S_{d}, t)
\end{displaymath}
and
\begin{displaymath}
A(t, d, S)=\left[\bigwedge_{i=t+1}^{\binom{r+d}{r}}\mu(i, d)\right]\in \textup{Gr}\left( S_{d}, \binom{r+d}{r}-t\right).
\end{displaymath}
\end{definition}

Consider $\textrm{Hilb}^{P}(\mathbb{P}^{r}_{k})$ for a Hilbert polynomial $P\in\mathbb{Q}[t]$ where $t$ is a variable. We can define some numerical functions and constants corresponding to $P$.

\begin{definition}
There is a function $\delta:\mathbb{N}\rightarrow\mathbb{N}$ such that 
\begin{displaymath}
\binom{r+\delta(d) -1}{r}<Q_{P}(d)\leq \binom{r+\delta(d)}{r}
\end{displaymath}
for all $d\geq g_{P}$ because $\mathbb{Z}$ is well-ordered. The defining inequality of $\delta$ is equivalent to 
\begin{displaymath}
x_{0}^{d-\delta(d)}x_{r}^{\delta(d)}\leq_{\textup{lex}} \mu(Q_{P}(d), d)<_{\textup{lex}} x_{0}^{d-\delta(d)+1}x_{r}^{\delta(d)-1}.
\end{displaymath}
That is, $d-\delta(d)=\deg_{x_{0}}\mu(Q_{P}(d), d)\geq 0$ for all $d\geq g_{P}$.
Also, $x_{r}\mu(Q_{P}(d), d)=\mu(Q_{P}(d+1), d+1)$ since the Hilbert polynomial of the ideal generated by $L(Q_{P}(d), d, S)$ is $Q_{P}$ for all $d\geq g_{P}$, as we can see in \cite[(2.1), (2.5), (2.9)]{Gotzmann}. Therefore, 
\begin{displaymath}
\begin{split}
d+1-\delta(d+1)&=\deg_{x_{0}}\mu(Q_{P}(d+1), d+1)\\
&=\deg_{x_{0}}x_{r}\mu(Q_{P}(d), d)\\
&=\deg_{x_{0}}\mu(Q_{P}(d), d)\\
&=d-\delta(d)
\end{split}
\end{displaymath}
for all $d\geq g_{P}$ so that there's an integer $\gamma\geq 0$ such that $\delta(d)=d-\gamma$ for all $d\geq g_{P}$. There is a function $l:\mathbb{N}\rightarrow\mathbb{Z}$ such that
\begin{displaymath}
\begin{split}
\dim_{k}L\left(\binom{r+l(d)-1}{r}, d, S\right)&=\sum_{0\leq i\leq l(d)-1}{\binom{r+i-1}{r-1}}\\
&< P(d)\\
&\leq \sum_{0\leq i\leq l(d)}{\binom{r+i-1}{r-1}}\\
&=\dim_{k}L\left(\binom{r+l(d)}{r}, d, S\right)
\end{split}
\end{displaymath}
for $d\geq g_{P}$ because $\mathbb{Z}$ is well-ordered. Define
\begin{displaymath}
\begin{split}
e(d)&=\bigg\langle \log{L(P(d), d, S)}, \sum_{i=1}^{r} \lambda_{i}\otimes_{\mathbb{Z}}1 \bigg\rangle_{\mathbb{R}}\\
&=|\{ n\in N(L(P(d), d, S))| \deg_{x_{0}} n=d-l(d) \}|l(d)\\
&\quad +\sum_{0\leq i\leq l(d)-1}|\{ n\in N(L(P(d), d, S))| \deg_{x_{0}} n =d-i \}|i\\
&=\bigg[P(d)-\sum_{0\leq i\leq l(d)-1}{\binom{r+i-1}{r-1}}\bigg]l(d)+\sum_{0\leq i\leq l(d)-1}{\binom{r+i-1}{r-1}}i.
\end{split}
\end{displaymath}
That is, 
\begin{equation}
\label{construction}
\bigg\langle \log{L(P(d), d, S)},  \lambda_{0}\otimes_{\mathbb{Z}}1 \bigg\rangle_{\mathbb{R}}=dP(d)-e(d).
\end{equation}
Indeed, $L(P(d),d, S)$ maximalizes the function
\begin{displaymath}
\max_{0\leq i\leq r}\langle \log{\overline{\quad}},  \lambda_{i}\otimes_{\mathbb{Z}}1 \rangle_{\mathbb{R}}:R(d, P(d), S)\rightarrow \mathbb{N}.
\end{displaymath}
\end{definition}

\begin{lemma}
\label{maximality}
$dP(d)-e(d)\geq \langle \log{W},  \lambda_{0}\otimes_{\mathbb{Z}}1 \rangle_{\mathbb{R}}$ for all $W\in R(d, P(d), S)$. In the case of equality, we have $L(\binom{r+l(d)-1}{r}, d, S)\subset W$ and $W\subset L(\binom{r+l(d)}{r}, d, S)$.
\end{lemma}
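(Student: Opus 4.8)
My plan is to interpret the quantity $\langle \mathbf{c}^{W}, \lambda_0 \otimes_{\mathbb Z} 1\rangle_{\mathbb R}$ as the sum of $x_0$-degrees over all monomials in the basis $N_W$, and then to show that the lex-segment $L_{P(d),d}(S)$ maximizes this sum among all $W \in R_{d,P(d)}(S)$. Explicitly, for a monomial $n = \prod_{j} x_j^{b_j}$ we have $\langle \text{(exponent of } n), \lambda_0\rangle_{\mathbb R} = b_0 = \deg_{x_0} n$, so that
\begin{displaymath}
\langle \mathbf{c}^{W}, \lambda_0 \otimes_{\mathbb Z} 1\rangle_{\mathbb R} = \sum_{n \in N_W} \deg_{x_0} n.
\end{displaymath}
Thus the claim $dP(d) - e(d) \geq \langle \mathbf{c}^{W}, \lambda_0\otimes_{\mathbb Z}1\rangle_{\mathbb R}$ becomes the purely combinatorial assertion that, among all sets of $P(d)$ monomials of degree $d$, the lex-segment $N_{L_{P(d),d}(S)}$ has the largest total $x_0$-degree; the value $dP(d) - e(d)$ is exactly that total, by \eqref{construction}.

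First I would stratify $M_d$ by $x_0$-degree: the monomials with $\deg_{x_0} = d - i$ are in bijection with $M_{i}(k[x_1,\ldots,x_r])$, a set of size $\binom{r+i-1}{r-1}$. Choosing a monomial $n$ with $\deg_{x_0} n = d-i$ contributes $d-i$ to the total $x_0$-degree, so maximizing $\sum_{n\in N_W}\deg_{x_0} n$ is equivalent to a greedy selection: fill the blocks of smallest $i$ (largest $x_0$-degree) first. The lex order does precisely this, since $\mu \leq_{\text{lex}} \mu'$ forces $\deg_{x_0}\mu \leq \deg_{x_0}\mu'$ whenever the two have different $x_0$-degrees. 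Concretely, $L_{P(d),d}(S)$ fully occupies the blocks $i = 0, \ldots, l(d)-1$ (contributing $\sum_{i} \binom{r+i-1}{r-1}(d-i)$) and then takes $P(d) - \sum_{0\le i\le l(d)-1}\binom{r+i-1}{r-1}$ monomials from the block $i = l(d)$, each of $x_0$-degree $d - l(d)$. A rearrangement/exchange argument then gives the inequality: if some $W$ contained a monomial of $x_0$-degree $d-i$ with $i > l(d)$ while leaving a monomial of degree $d-i'$ with $i' \leq l(d)$ available, swapping would strictly increase the total, contradicting maximality.

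For the equality clause I would run the exchange argument in reverse. Equality in the greedy bound forces $W$ to saturate every block of $x_0$-degree strictly above the critical block $i = l(d)$, which is exactly the statement $L_{\binom{r+l(d)-1}{r},d}(S) \subset W$ (these are all monomials with $\deg_{x_0} \geq d - l(d) + 1$). Likewise, $W$ can contain no monomial of $x_0$-degree strictly below $d - l(d)$, for any such monomial could be exchanged for an unused one in the critical block without decreasing—indeed strictly increasing—the total unless that block is already full; tracking the counts shows $W$ is forced into the span of the first $\binom{r+l(d)}{r}$ lex monomials, i.e. $W \subset L_{\binom{r+l(d)}{r},d}(S)$.

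The main obstacle I anticipate is the bookkeeping at the boundary block $i = l(d)$, where $W$ is only partially filled, so the exchange argument must be stated carefully enough to pin down both containments simultaneously without over-claiming that $W$ equals the lex-segment itself (it need not). I would handle this by phrasing maximality as: every monomial of $N_W$ lying outside $L_{\binom{r+l(d)}{r},d}(S)$ can be traded for a lex-earlier unused monomial, and the total strictly rises unless no such monomial exists—which is precisely the two containments. The rest is the routine identification of $\langle \mathbf c^{W}, \lambda_0\rangle_{\mathbb R}$ with total $x_0$-degree and a comparison of the resulting sums against \eqref{construction}.
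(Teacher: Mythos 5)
Your proposal is correct and takes essentially the same approach as the paper: both identify $\langle \mathbf{c}^{W}, \lambda_{0}\otimes_{\mathbb{Z}}1\rangle_{\mathbb{R}}$ with the total $x_{0}$-degree of $N_{W}$, stratify $M_{d}$ by $x_{0}$-degree, and run a swap/exchange argument whose extremal value $dP-e$ is realized exactly by sets pinched between the two lex segments. The paper's proof is simply a compressed version of your argument (``by the construction of $l$'' and ``by the construction of $e$''), so no further comment is needed.
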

\begin{proof}
Fix $W\in R(d, P(d), S)$. If $L(\binom{r+l(d)-1}{r}, d, S)\not\subset W$ or $W\not\subset L(\binom{r+l(d)}{r}, d, S)$, then we can find monomials $n\in M_{d}\setminus N(W)$ and $m\in N(W)$ such that
\begin{displaymath}
\langle \log{W'},  \lambda_{0}\otimes_{\mathbb{Z}}1 \rangle_{\mathbb{R}}>\langle \log{W},  \lambda_{0}\otimes_{\mathbb{Z}}1 \rangle_{\mathbb{R}}
\end{displaymath}
by the construction of $l$, where $W'$ is generated by $N(W)\cup\{n\}\setminus \{m\}$.\newline If $L(\binom{r+l(d)-1}{r}, d, S)\subset W$ and $W\subset L(\binom{r+l(d)}{r}, d, S)$, then we have $dP(d)-e(d) = \langle \log{W},  \lambda_{0}\otimes_{\mathbb{Z}}1 \rangle_{\mathbb{R}}$ by the construction of $e$.
\end{proof}
Before we state another lemma about asymptotic behavior of numerical functions defined above in this section, let's define the first {\it discriminant} function $\Phi$.
\begin{definition}Let $\Phi:\mathbb{N}\rightarrow\mathbb{N}$ be the function satisfying
\begin{displaymath}
\Phi(d)=d^{2}[P(d)]^{2}-4dP(d)e(d)+\frac{2(r+1)}{r}[e(d)]^{2}
\end{displaymath}
for all $d\in\mathbb{N}$.
\end{definition}
Actually, $\Phi$ is the discriminant of a quadratic inequality, which will be mentioned in Lemma~\ref{opt1}.
\begin{lemma}
\label{truncate1}
There is an integer $D_{P}\geq g_{P}$ corresponding to the Hilbert polynomial $P$ such that every integer $d\geq D_{P}$ satisfies following properties.
\begin{equation}
\label{discriminant1}
\Phi(d)>0
\end{equation}
\begin{equation}
\label{lowerbound1}
\frac{dP(d)-\sqrt{\Phi(d)}}{2dP(d)}<\frac{1}{r+1}
\end{equation}
\begin{equation}
\label{upperbound1}
\bigg|dP(d)-2e(d)-\sqrt{\Phi(d)}\bigg|<2e(d)
\end{equation}
\begin{equation}
\label{asymptotic1}
\frac{e(d)}{dP(d)}\leq \frac{l(d)}{d}<\frac{1}{8}
\end{equation}
Furthermore, if $P$ is a constant polynomial, then there's $D_{P}\in\mathbb{N}$ such that $d\geq D_{P}$ implies \eqref{discriminant1}, \eqref{lowerbound1}, \eqref{asymptotic1} and
\begin{equation}
\label{upperbound2}
\bigg|dP(d)-2e(d)-\sqrt{\Phi(d)}\bigg|<2.
\end{equation}
\end{lemma}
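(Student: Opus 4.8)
The plan is to reduce all of \eqref{discriminant1}, \eqref{lowerbound1}, \eqref{upperbound1} and \eqref{asymptotic1} (and, in the constant case, \eqref{upperbound2}) to the single asymptotic fact that $x:=e/(dP)\to 0$ as $d\to\infty$, after recording the exact value of the expression under the square root. The estimates should then fall out by elementary sign analysis, so the real content is in the two growth statements below.

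First I would establish the two growth estimates that drive everything. Since $Q\neq 0$ the ideal is nonzero, its vanishing locus is a proper subscheme of $\mathbb{P}^r$, so $\deg Q=r$ and therefore $n:=\deg P\leq r-1$. Using the hockey-stick identity $\sum_{0\leq i\leq l-1}\binom{r+i-1}{r-1}=\binom{r+l-1}{r}$, the defining inequalities of $l$ read $\binom{r+l-1}{r}<P\leq\binom{r+l}{r}$; since $\binom{r+l-1}{r}\sim l^{r}/r!$ while $P$ grows like $d^{n}$ with $n\leq r-1$, this forces $l=O(d^{n/r})$ and hence $l/d\to 0$, which gives the second inequality of \eqref{asymptotic1} for large $d$. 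For the first inequality of \eqref{asymptotic1} I would rewrite $e$, via $\binom{r+i-1}{r-1}\,i=r\binom{r+i-1}{r}$ and hockey-stick, as $e=\big[P-\binom{r+l-1}{r}\big]l+r\binom{r+l-1}{r+1}$; then $e\leq lP$ is equivalent, using $\binom{r+l-1}{r+1}=\tfrac{l-1}{r+1}\binom{r+l-1}{r}$, to $\tfrac{r(l-1)}{r+1}\leq l$, which holds for every $l\geq 0$. Thus $x=e/(dP)\leq l/d$ holds for all admissible $d$, completing \eqref{asymptotic1} and showing $x\to 0$.

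The key computation is the identity
\[
d^{2}P^{2}-4dPe+\tfrac{2(r+1)}{r}e^{2}=(dP-2e)^{2}-\tfrac{2(r-1)}{r}e^{2},
\]
which lets me handle each square root cleanly; write $\Delta$ for this quantity. For \eqref{discriminant1}, reading $\Delta$ as a quadratic in $e$ with positive leading coefficient, $\Delta>0$ once $e$ lies below its smaller root, and that root divided by $dP$ is a positive constant while $x\to 0$. For \eqref{lowerbound1} I would first note $\sqrt{\Delta}\leq dP$ (equivalent to $e\leq\tfrac{2r}{r+1}dP$, true for large $d$), clear the denominator, square, and reduce to $\tfrac{4r}{(r+1)^{2}}-4x+\tfrac{2(r+1)}{r}x^{2}>0$, which holds since the constant term is positive and $x\to 0$. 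For \eqref{upperbound1}, since $dP-2e>0$ and $\Delta\geq 0$ for large $d$, the identity $A-\sqrt{A^{2}-B}=B/(A+\sqrt{A^{2}-B})$ with $A=dP-2e$ and $B=\tfrac{2(r-1)}{r}e^{2}$ gives $dP-2e-\sqrt{\Delta}=\tfrac{2(r-1)e^{2}/r}{(dP-2e)+\sqrt{\Delta}}$, and the required bound $<2e$ reduces to $\tfrac{r-1}{r}e<dP-2e+\sqrt{\Delta}$, which is implied by $e<\tfrac{r}{3r-1}dP$.

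For the constant case, $l$ and $e$ are eventually constant (say $l_{0},e_{0}$), so \eqref{discriminant1}, \eqref{lowerbound1} and \eqref{asymptotic1} are special cases of the above. For \eqref{upperbound2} the same identity gives $dP-2e_{0}-\sqrt{\Delta}=\tfrac{2(r-1)e_{0}^{2}/r}{(dP-2e_{0})+\sqrt{\Delta}}$, which tends to $0$ as $dP\to\infty$ with $e_{0}$ fixed and is therefore $<2$ for large $d$; this is exactly where constancy of $e$ is used, since for nonconstant $P$ one has $e\to\infty$ and only the weaker bound $2e$ of \eqref{upperbound1} survives (indeed the degenerate sub-case $p=1$, $e_{0}=0$ shows \eqref{upperbound1} itself can fail). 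I expect the main obstacle to be bookkeeping rather than conceptual: pinning down $\deg P\leq r-1$ and the binomial-sum rewriting of $e$, and, before every squaring step, verifying the signs $dP-2e>0$ and $\Delta\geq 0$, with $r=1$ (where $\tfrac{2(r-1)}{r}=0$ and $\Delta=(dP-2e)^{2}$) treated separately.
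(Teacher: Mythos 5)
Your proposal is correct, and its skeleton is the same as the paper's: reduce all five inequalities to the two asymptotic facts $l/d\to 0$ and $e/(dP)\to 0$. The differences are in execution, and your version is substantially more complete. For $l/d\to 0$ the paper does not invoke $\deg P\leq r-1$ directly; it uses $P(d)<\binom{r+d}{r}-\binom{r+\delta(d)-1}{r}=\sum_{i=d-\gamma}^{d}\binom{r+i-1}{r-1}$ (a polynomial of degree $r-1$ in $d$) and compares it with $\binom{r+\lceil d^{\eta}\rceil}{r}$ for a fixed $\frac{r-1}{r}<\eta<1$, concluding $l(d)\leq\lceil d^{\eta}\rceil$ --- the same growth comparison as yours. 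For $e\leq lP$ the paper just bounds $i\leq l(d)$ termwise in the defining sum for $e$, which is shorter than (but equivalent to) your hockey-stick manipulation. After establishing the two limits, however, the paper's proof ends with the bare assertion that they ``imply what we want to prove,'' gesturing only at \eqref{lowerbound1}, \eqref{upperbound2} and \eqref{discriminant1}; your identity $d^{2}P^{2}-4dPe+\frac{2(r+1)}{r}e^{2}=(dP-2e)^{2}-\frac{2(r-1)}{r}e^{2}$, the rationalization $A-\sqrt{A^{2}-B}=B/(A+\sqrt{A^{2}-B})$, and the sign checks before each squaring are precisely the missing content, so your write-up is the rigorous version of the paper's sketch. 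Finally, your parenthetical remark is a genuine catch that the paper's hand-waving obscures: $e=0$ happens exactly when $P\equiv 1$ (then $l=0$), and in that case the left-hand side of \eqref{upperbound1} equals $2e=0$, so \eqref{upperbound1} is false as stated for that Hilbert polynomial; only the constant-case substitute \eqref{upperbound2} survives there. So the first part of the lemma implicitly needs $e>0$, i.e.\ $P\not\equiv 1$, and your proof is the one that makes this visible.
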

\begin{proof}
We claim that
\begin{displaymath}
\lim_{d\rightarrow\infty}\frac{l(d)}{d}=0
\end{displaymath}
and this implies
\begin{displaymath}
\lim_{d\rightarrow\infty}\frac{e(d)}{dP(d)}=0.
\end{displaymath}
These properties imply what we want to prove. For example, the left-hand sides of \eqref{lowerbound1} and \eqref{upperbound2} tend to zero as $d\rightarrow\infty$ and the left-hand side of \eqref{discriminant1} tends to $\infty$ as $d\rightarrow\infty$ if these assumptions are true. By the definition of $l$, the value $l(d)$ is the smallest integer satisfying
\begin{displaymath}
P(d) \leq \sum_{i=0}^{l(d)}{\binom{r+i-1}{r-1}}=\binom{r+l(d)}{r}.
\end{displaymath} 
Let $\frac{r-1}{r}<\eta<1$. Note that
\begin{displaymath}
\lim_{d\rightarrow\infty}{\sum_{i=\delta(d)}^{d}\frac{\binom{r+i-1}{r-1}}{\binom{r+\lceil d^{\eta}\rceil}{r}}}= \lim_{d\rightarrow\infty}{\sum_{i=0}^{\gamma}\frac{\binom{r+d-\gamma +i-1}{r-1}}{\binom{r+\lceil d^{\eta}\rceil}{r}}} =0.
\end{displaymath}
The preceding formula means that
\begin{displaymath}
\begin{split}
P(d)&<\binom{r+d}{r}-\binom{r+\delta(d)-1}{r}\\
&=\sum_{i=\delta(d)}^{d}{\binom{r+i-1}{r-1}}\\
&< \binom{r+\lceil d^{\eta}\rceil}{r}
\end{split}
\end{displaymath}
for sufficiently large $d$. Therefore, $l(d)\leq \lceil d^{\eta}\rceil$ for $d\gg 0$. From the definition of $e$ we see that 
\begin{displaymath}
\begin{split}
e(d)&=\sum_{0\leq i\leq l(d)-1}{\binom{r+i-1}{r-1}}i+l(d)\bigg[P(d)-\sum_{0\leq i\leq l(d)-1}{\binom{r+i-1}{r-1}}\bigg]\\
&\leq \sum_{0\leq i\leq l(d)-1}{\binom{r+i-1}{r-1}}l(d)+l(d)\bigg[P(d)-\sum_{0\leq i\leq l(d)-1}{\binom{r+i-1}{r-1}}\bigg]\\
&=l(d)P(d)
\end{split}
\end{displaymath}
so that
\begin{displaymath}
0\leq\lim_{d\rightarrow\infty}{\frac{e(d)}{dP(d)}}\leq\lim_{d\rightarrow\infty}{\frac{l(d)}{d}}\leq\lim_{d\rightarrow\infty}{\frac{\lceil d^{\eta}\rceil}{d}}=0,
\end{displaymath}
as desired.
\end{proof}

We can define the number $D_{P}$ corresponding to the Hilbert polynomial $P$ using Lemma 3.3.
\begin{definition}
For each Hilbert polynomial $P$, let $D_{P}$ be the minimal integer satisfying the conditions in Lemma~\ref{truncate1}. 
\end{definition}

Now we are ready to state some properties of $Z_{d}^{P(d)}(S)$ for an arbitrary constant  Hilbert polynomial $P$. 

\begin{lemma}
\label{opt1}
Let $P$ be a constant Hilbert polynomial and suppose that $d\geq D_{P}$. Let $W\in Z_{d}^{P(d)}(S)$. If $\log{W}=(c_{0}, \ldots , c_{r})$, then $\max_{1\leq i\leq r}c_{i}=dP(d)-e(d)$. There is a permutation matrix $q\in\textup{GL}_{r+1}(k)$ such that $S(q.W^{\star})$ is a Borel-fixed monomial ideal.
\end{lemma}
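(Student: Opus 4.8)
\emph{Plan.} I would prove the two assertions in turn, obtaining the value of the largest coordinate from convexity together with the asymptotics of Lemma~\ref{truncate1}, and the Borel-fixed statement from a one-monomial exchange argument. Since $\|\mathbf{c}^{W}\|$ and $\dim W$ are invariant under permuting $x_{0},\dots,x_{r}$, the set $Z_{d}^{P(d)}(S)$ is permutation-invariant, so after permuting the variables by some $\sigma$ I may assume $c_{0}\ge c_{1}\ge\cdots\ge c_{r}$; then $\max_{0\le i\le r}c_{i}=c_{0}$, and $\sum_{i=0}^{r}c_{i}=dP$ because $N_{W}$ consists of $P$ monomials of degree $d$. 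I first record that $c_{0}$ is already maximal for the \emph{value}: by Lemma~\ref{maximality}, $c_{0}=\langle\mathbf{c}^{W},\lambda_{0}\otimes_{\mathbb{Z}}1\rangle_{\mathbb{R}}\le dP-e$ for every $W\in R_{d,P(d)}(S)$.

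To force equality I would argue by contradiction: suppose $c_{0}\le dP-e-1$. Then $\mathbf{c}^{W}$ lies in the polytope $\{c\in(\mathbb{R}^{+})^{r+1}:\sum_{i}c_{i}=dP,\ c_{i}\le dP-e-1\}$, on which the convex function $\sum_{i}c_{i}^{2}$ attains its maximum at a vertex; because $2e+2<dP$ for $d\ge D_{P}$ by \eqref{asymptotic1}, such a vertex saturates exactly one coordinate, so $\|\mathbf{c}^{W}\|^{2}\le(dP-e-1)^{2}+(e+1)^{2}$. On the other hand the lex segment $L_{P(d),d}(S)$ is realizable, has first coordinate $dP-e$ by \eqref{construction}, and by Cauchy--Schwarz $\|\mathbf{c}^{L_{P(d),d}(S)}\|^{2}\ge(dP-e)^{2}+e^{2}/r$. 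The asymptotic inequalities \eqref{discriminant1}, \eqref{lowerbound1} and \eqref{upperbound2} of Lemma~\ref{truncate1}---which for constant $P$ keep $l$ and hence $e$ bounded while $dP\to\infty$---are exactly what guarantees $(dP-e)^{2}+e^{2}/r>(dP-e-1)^{2}+(e+1)^{2}$ for $d\ge D_{P}$. Since a worst unstable $W$ maximizes $\|\mathbf{c}^{W}\|$ over $R_{d,P(d)}(S)$, it cannot satisfy $c_{0}\le dP-e-1$; hence $\max_{0\le i\le r}c_{i}=c_{0}=dP-e$.

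For the Borel-fixed statement I would use an exchange on the sorted representative $\sigma.W$. Fix $0\le i<r$ and a monomial $m'\in N_{\sigma.W}$ with $x_{i+1}\mid m'$ such that $m:=(x_{i}/x_{i+1})m'\notin N_{\sigma.W}$; then $W'$ generated by $N_{\sigma.W}\cup\{m\}\setminus\{m'\}$ lies in $R_{d,P(d)}(S)$, and its coordinate vector differs from $\mathbf{c}^{\sigma.W}$ only by $c_{i}\mapsto c_{i}+1$ and $c_{i+1}\mapsto c_{i+1}-1$, whence $\|\mathbf{c}^{W'}\|^{2}-\|\mathbf{c}^{\sigma.W}\|^{2}=2(c_{i}-c_{i+1})+2$. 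Since $c_{i}\ge c_{i+1}$ this is strictly positive, contradicting the maximality of $\|\mathbf{c}^{\sigma.W}\|$; hence no such $m'$ exists in any direction, i.e.\ $N_{\sigma.W}$ is closed under every move $x_{i+1}\mid m'\mapsto(x_{i}/x_{i+1})m'$, so $\sigma.W$ generates a Borel-fixed ideal (using the standard fact that a set closed under these moves in a single degree generates a Borel-fixed ideal). As complementation commutes with permutations, $\sigma.(W^{\star})=(\sigma.W)^{\star}$ is the complement of a Borel-closed set and hence closed under the opposite moves; reversing the variable order by $\rho:x_{j}\mapsto x_{r-j}$ turns those into standard Borel moves, so $q:=\rho\sigma$ satisfies that $S(q.W^{\star})$ is Borel-fixed. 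One may alternatively phrase this dual passage through Lemma~\ref{duality} and \eqref{dualform}.

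The exchange step is the clean part: it simultaneously yields the Borel structure and shows the maximal coordinate occupies position $0$ after sorting. The main obstacle is the quantitative comparison in the middle step---verifying that the convex extreme-point bound $(dP-e-1)^{2}+(e+1)^{2}$ is strictly beaten by the saturated value $(dP-e)^{2}+e^{2}/r$. This is precisely the content of the finely calibrated estimates \eqref{discriminant1}--\eqref{upperbound2}, and locating the correct threshold $D_{P}$---crucially using that $P$ is constant, so that $e$ stays bounded---is where the real work lies.
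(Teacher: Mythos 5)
Your proposal is correct, and its skeleton matches the paper's: bound the top coordinate by $dP-e$ via Lemma~\ref{maximality}, rule out $c_{0}\le dP-e-1$ by comparison with the lex segment $L_{P(d),d}(S)$, and obtain Borel-fixedness from a one-monomial exchange that strictly increases $\Vert\mathbf{c}\Vert^{2}$. Where you genuinely depart from the paper is the quantitative middle step. The paper bounds $\sum_{i}c_{i}^{2}\le c_{0}^{2}+(dP-c_{0})^{2}$ and analyzes the right side as a quadratic in $c_{0}$ on the interval $[\tfrac{dP}{r+1},\,dP-e-1]$, which uses the full strength of \eqref{discriminant1}, \eqref{lowerbound1} and \eqref{upperbound2}: positivity of the discriminant, the lower root lying below $\tfrac{dP}{r+1}$, and the upper root lying within $1$ of $dP-e$. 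You instead bound $\Vert\mathbf{c}^{W}\Vert^{2}$ by the value at a vertex of the polytope $\{c\ge 0,\ \sum_{i} c_{i}=dP,\ c_{i}\le dP-e-1\}$, namely $(dP-e-1)^{2}+(e+1)^{2}$, so everything reduces to the single inequality $(dP-e)^{2}+e^{2}/r>(dP-e-1)^{2}+(e+1)^{2}$, equivalently $\tfrac{r-1}{r}e^{2}<2dP-4e-2$. This is cleaner, and your attribution of it to Lemma~\ref{truncate1}, which you assert but do not verify, is in fact exact: \eqref{upperbound2} gives $\sqrt{d^{2}P^{2}-4dPe+\tfrac{2(r+1)}{r}e^{2}}>dP-2e-2$, and squaring (legitimate since \eqref{asymptotic1} forces $dP-2e-2>0$ for $d\ge D_{P}$) yields precisely $\tfrac{r-1}{r}e^{2}<2dP-4e-2$; it also follows more crudely from the boundedness of $e$ when $P$ is constant. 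For the Borel statement, you run the exchange on the sorted $W$ and then pass to $W^{\star}$ by complementation plus the order-reversing permutation, while the paper sorts $\mathbf{c}^{W^{\star}}$ and exchanges there directly, citing Lemma~\ref{duality}; by \eqref{dualform} sorting $\mathbf{c}^{W}$ descending sorts $\mathbf{c}^{W^{\star}}$ ascending, so these are mirror images of the same argument, and both rely on the same standard fact that a Borel-closed set of monomials in a single degree generates a Borel-fixed ideal.
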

\begin{proof}
We can prove that $c_{i}\leq dP(d)-e(d)$ for all $0\leq i\leq r$ using Lemma~\ref{maximality}. Let us apply an action on $W$ by a permutation matrix to assume that $c_{i}\geq c_{i+1}$ for every $0\leq i\leq r-1$, if necessary. We claim that if $c_{0} \leq dP(d)-e(d)-1$ then $W\notin Z^{P(d)}_{d}(S)$. If $c_{0}=dP(d)-e(d)$ then $|\Delta_{W}|^{2}$ is at least $(dP(d)-e(d))^{2}+\frac{1}{r}[e(d)]^{2}$ by the convexity of the square sum function defined on a simplex defined by the equation $\sum_{1\leq i\leq r}c_{i}=e(d)$. Therefore it suffices to show that
\begin{displaymath}
\begin{split}
\sum_{i=0}^{r}c_{i}^{2}&\leq c_{0}^{2}+(dP(d)-c_{0})^{2}= 2c_{0}^{2}-2dP(d)c_{0}+d^{2}[P(d)]^{2}\\
&<(dP(d)-e(d))^{2} +\frac{1}{r}[e(d)]^{2}
\end{split}
\end{displaymath}
if $\frac{dP(d)}{r+1}\leq c_{0}\leq dP(d)-e(d)-1$. The inequality $c_{0}\geq \frac{dP(d)}{r+1}$ holds under the condition $\sum_{0\leq i\leq r}c_{i}=dP(d)$ because of the pigeonhole principle. The first inequality is trivial. Being equivalent to a quadratic inequality in $c_{0}$ whose discriminant is $\Phi(d)$ as mentioned before Lemma~\ref{truncate1}, the second inequality is equivalent to
\begin{displaymath}
\frac{dP(d)-\sqrt{\Phi(d)}}{2}<c_{0}<\frac{dP(d)+\sqrt{\Phi(d)}}{2}
\end{displaymath}
by \eqref{discriminant1}. The equation \eqref{lowerbound1} means that
\begin{displaymath}
\frac{dP(d)-\sqrt{\Phi(d)}}{2}<\frac{dP(d)}{r+1}.
\end{displaymath}
Since $r\geq 1$, we get
\begin{displaymath}
\begin{split}
\frac{dP(d)+\sqrt{\Phi(d)}}{2} &\leq \frac{dP(d)+\sqrt{d^{2}[P(d)]^{2}-4dP(d)e(d)+4[e(d)]^{2}}}{2}\\
&=dP(d)-e(d).
\end{split}
\end{displaymath}
Using \eqref{upperbound2}, we see that the difference between the both sides of the preceding inequality is sufficiently small; that is,
\begin{displaymath}
\bigg|dP(d)-e(d)-\frac{dP(d)+\sqrt{\Phi(d)}}{2}\bigg|<1
\end{displaymath}
so that 
\begin{displaymath}
dP(d)-e(d)-1<\frac{dP(d)+\sqrt{\Phi(d)}}{2} \leq dP(d)-e(d).
\end{displaymath}
Thus, the first statement is true. There is a permutation matrix $q\in\textup{GL}_{r+1}(k)$  such that $\log{q.W^{\star}}=(c_{0}', \ldots, c_{r}')$ satisfies $c_{i}'\geq c_{i+1}'$ for all $0\leq i\leq r-1$. If $S(q.W^{\star})$ is not Borel-fixed, then there are $n_{1}\in N(q.W^{\star})$ and $n_{2}\in S_{d}\setminus N(q.W^{\star})$ such that $x_{i}n_{1}=x_{j}n_{2}$ for some $0\leq i < j \leq r$. If $W'\in\textup{Gr}(S_{d}, Q_{P}(d))$ is generated by $N(W)\cup\{n_{2}\}\setminus\{n_{1}\}$, then 
\begin{displaymath}
|\Delta_{W'}|^{2}-|\Delta_{W^{\star}}|^{2}=(c_{i}'+1)^{2}+(c_{j}'-1)^{2}-(c_{i}')^{2}-(c_{j}')^{2}>0,
\end{displaymath}
which contradicts the maximality of $|\Delta_{W}|$ by Lemma~\ref{duality}.
\end{proof}
There are some properties satisfied by an arbitrary worst unstable point of $\textup{Gr}(S_{d}, P(d))$ for an arbitrary choice of $P\in \mathbb{Q}[t]$, which are weaker than the properties stated in Lemma~\ref{opt1}.
\begin{lemma}
\label{general1}
If $d\geq D_{P}$ then every $W\in Z_{d}^{P(d)}(S)$ satisfies the following properties.
\begin{itemize}
\item $\log{W}=(c_{0}, \ldots, c_{r})$ satisfies $dP(d)-2e(d) <\max_{0\leq i\leq r}c_{i}\leq dP(d)-e(d)$.
\item $W$ is a $P(d)$-dimensional subspace of $S_{d}$ generated by monomials and $x_{\beta}^{\lfloor \frac{d}{2} \rfloor}$ divides any monomial in $W$ where $c_{\beta}=\max_{0\leq i\leq r}c_{i}$.
\item For any monomial $n\in W$, $U_{\beta}(n)\subset W$.
\item There is a permutation matrix $q\in\textup{GL}_{r+1}(k)$ such that $S(q.W^{\star})$ is a Borel-fixed ideal.
\end{itemize}
\end{lemma}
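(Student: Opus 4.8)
The plan is to take the four bullets in the order: the two-sided estimate on $\max_i c_i$ first, then the Borel-fixedness and the upward-closure statements, and finally the divisibility statement, which I expect to be the crux. Throughout write $\mathbf{c}^W=(c_0,\ldots,c_r)$, recall $|\Delta_W|^2=\sum_i c_i^2$ and $\sum_i c_i=dP$, and use that permuting $x_0,\ldots,x_r$ preserves membership in $R_{d,P(d)}(S)$ and in $Z_d^{P(d)}(S)$, so I may relabel to put the largest coordinate wherever convenient.

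For the first bullet, the upper bound $\max_i c_i\le dP-e$ is immediate from Lemma~\ref{maximality} together with this permutation symmetry. For the lower bound I would run the proof of Lemma~\ref{opt1} essentially verbatim, the only change being that the sharp estimate \eqref{upperbound2}, available only for constant $P$, is replaced by its general counterpart \eqref{upperbound1}. Concretely, writing $c_\beta$ for the largest coordinate, the elementary inequality $\sum_{i\ne\beta}c_i^2\le(\sum_{i\ne\beta}c_i)^2$ gives $\sum_i c_i^2\le c_\beta^2+(dP-c_\beta)^2$, and comparing this with the lower bound $(dP-e)^2+\tfrac1r e^2$ for $\max_W|\Delta_W|^2$ (coming from the lexicographic segment $L_{P(d),d}(S)$, whose largest coordinate is $dP-e$ by \eqref{construction}) reduces matters to a quadratic inequality in $c_\beta$ whose discriminant is exactly the quantity in \eqref{discriminant1}. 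Then \eqref{lowerbound1} pushes the smaller root below $\tfrac{dP}{r+1}$, which $c_\beta$ cannot drop beneath by the pigeonhole principle, while \eqref{upperbound1} forces the larger root above $dP-2e$; hence $c_\beta>dP-2e$, giving the first bullet.

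For the fourth bullet I would copy the second half of the proof of Lemma~\ref{opt1} word for word, since it never used constancy of $P$: choosing a permutation $q$ so that $\mathbf{c}^{q.W^\star}$ is weakly decreasing, any violation of the Borel condition yields monomials $n_1\in N_{q.W^\star}$ and $n_2\notin N_{q.W^\star}$ with $x_i n_1=x_j n_2$, $i<j$, and the resulting exchange strictly increases $|\Delta_{q.W^\star}|$, contradicting Lemma~\ref{duality} and the maximality of $|\Delta_W|$. For the third bullet I would use the same exchange idea but push degree toward the largest coordinate: replacing $n$ by $n x_\beta/x_j$ (for $x_j\mid n$, $j\ne\beta$) changes $|\Delta_W|^2$ by $2(c_\beta-c_j)+2>0$, so optimality forbids this move from leaving $W$. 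Thus for every $n\in N_W$ and every $x_j\mid n$ with $j\ne\beta$ the monomial $nx_\beta/x_j$ again lies in $N_W$; iterating this closure yields precisely the containment $U_\beta(n)\subseteq W$ asserted in the third bullet.

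The second bullet is where the real work lies. Using the third bullet, $W$ is determined by the order ideal $D=\{\,m': x_\beta^{\,d-\deg m'}m'\in N_W\,\}$ of monomials in the remaining $r$ variables, and the first bullet reads $\Sigma:=\sum_{m'\in D}\deg m'=dP-c_\beta<2e$. A monomial of $x_\beta$-degree below $\lfloor d/2\rfloor$ corresponds to an element $m'\in D$ of degree exceeding $\lceil d/2\rceil$, and since $D$ is an order ideal it contains a full chain of divisors of $m'$, one in each degree $0,1,\ldots,\deg m'$. The difficulty is that the resulting bound $\Sigma\ge\binom{\lceil d/2\rceil+1}{2}$ is only of order $d^2$, whereas $e$ can grow faster than $d^2$ when $r$ is large, so a crude chain count does not by itself contradict $\Sigma<2e$. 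The point I would have to make precise is that square-distance maximality forbids $D$ from spending its degree budget on a few high-degree, nearly pure-power monomials: concentrating degree in one non-$\beta$ variable $x_j$ inflates $c_j$, but the gain $\sim c_j^2$ in the secondary part of $\sum_i c_i^2$ is dwarfed by the loss $\sim 2\,dP\,(\Sigma-\Sigma_{\min})$ suffered by the dominant term $c_\beta^2=(dP-\Sigma)^2$ as $\Sigma$ moves away from the compressed, degree-minimal order ideal. Quantifying this trade-off, i.e. showing that replacing any high-degree element of $D$ by low-degree monomials strictly raises $|\Delta_W|$, is the main obstacle, and it is exactly here that the asymptotics of Lemma~\ref{truncate1}, in particular $l/d<\tfrac18$ and the consequent $e=o(dP)$, become indispensable: they guarantee that for $d\ge D_P$ the compressed order ideal already has all degrees $o(d)$, comfortably below $\lfloor d/2\rfloor$, and that no degree-raising exchange can pay for itself.
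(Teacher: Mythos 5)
Your handling of the first, third, and fourth bullets is correct and is essentially the paper's own proof. The first bullet is argued exactly as you propose: compare $\sum_i c_i^2\le c_\beta^2+(dP-c_\beta)^2$ with the lower bound $(dP-e)^2+\frac{1}{r}e^2\le\Vert\mathbf{c}^{L_{P(d),d}(S)}\Vert_{\mathbb{R}}^2$, and use \eqref{discriminant1}, \eqref{lowerbound1} and \eqref{upperbound1} (in place of \eqref{upperbound2}) to trap the interval $[\frac{dP}{r+1},\,dP-2e]$ strictly between the two roots of the resulting quadratic. The fourth bullet is the paper's exchange argument on $q.W^{\star}$ via Lemma~\ref{duality}, word for word. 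For the third bullet the paper performs a single exchange with an arbitrary $m\in U_{\beta}(n)\setminus N_{W}$, obtaining $|\Delta_{W'}|^{2}-|\Delta_{W}|^{2}=\sum_i(2c_ia_i+a_i^2)\ge a_\beta^2>0$ from $a_i\le 0$ for $i\ne\beta$ and $c_i\le c_\beta$; your iterated elementary moves $n\mapsto nx_\beta/x_j$ generate all of $U_\beta(n)$, so this is an equivalent variant.

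The genuine gap is the second bullet, which you yourself label ``the main obstacle'' and never close. The idea you are missing is that no compression or chain-counting analysis of the order ideal $D$ is needed: the first bullet already caps the \emph{aggregate} of the non-$\beta$ coordinates, $\sum_{i\ne\beta}c_i=dP-c_\beta<2e$, and this bounds the loss of any exchange uniformly, no matter how that degree is distributed among the variables. Concretely, the paper's proof is one swap. If $n'\in N_W$ has $\deg_{x_\beta}n'<\lfloor d/2\rfloor$, choose $m'\in M_d\setminus N_W$ with $x_\beta^{\,d-l}\mid m'$; such $m'$ exists by pigeonhole, since there are $\binom{r+l}{r}\ge P(d)$ monomials divisible by $x_\beta^{\,d-l}$ and $n'\in N_W$ is not one of them because $d-l>\lfloor d/2\rfloor$ (a point your sketch also omits). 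Let $W''$ be spanned by $N_W\cup\{m'\}\setminus\{n'\}$ and write $\mathbf{c}^{W''}=\mathbf{c}^{W}+(b_0,\ldots,b_r)$. Then $b_\beta>\lceil d/2\rceil-l>\frac{d}{3}$ by \eqref{asymptotic1}, and $b_i\ge -d$ for $i\ne\beta$, hence
\begin{displaymath}
|\Delta_{W''}|^{2}-|\Delta_{W}|^{2}\ \ge\ 2c_\beta b_\beta+\sum_{i\ne\beta}2c_ib_i\ >\ 2(dP-2e)\frac{d}{3}-2d\sum_{i\ne\beta}c_i\ >\ 2(dP-2e)\frac{d}{3}-4de\ =\ \frac{2d}{3}(dP-8e)\ >\ 0,
\end{displaymath}
the last inequality again by \eqref{asymptotic1}. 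This contradicts $W\in Z_d^{P(d)}(S)$ in two lines. Your proposed route --- comparing against the degree-minimal order ideal and weighing a gain of order $c_j^2$ against a loss of order $2dP(\Sigma-\Sigma_{\min})$ --- could plausibly be completed, but as written it is an unproven assertion at precisely the step the lemma needs, so the proposal does not constitute a proof of the second bullet.
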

\begin{proof}
It is possible to modify the proof of Lemma~\ref{opt1} in order to prove that the first bullet is true for any $d\geq D_{P}$; indeed, $\frac{dP(d)}{r+1}\leq c_{0}$ if we assume that $c_{0}=\max_{0\leq i\leq r}c_{i}$ after using an action by a permutation matrix. $c_{0}\leq dP(d)-2e(d)$ implies that
\begin{displaymath}
\frac{dP(d)-\sqrt{\Phi(d)}}{2}<\frac{dP(d)}{r+1}\leq c_{0}\quad (\because \eqref{lowerbound1}, \eqref{discriminant1})
\end{displaymath}
\begin{displaymath}
\leq dP(d)-2e(d)<\frac{dP(d)+\sqrt{\Phi(d)}}{2}\quad (\because \eqref{upperbound1}, \eqref{discriminant1})
\end{displaymath}
so that
\begin{displaymath}
\begin{split}
\sum_{i=0}^{r}c_{i}^{2}&\leq c_{0}^{2}+(dP(d)-c_{0})^{2}\\
&= 2c_{0}^{2}-2dP(d)c_{0}+d^{2}[P(d)]^{2}\\
&<(dP(d)-e(d))^{2} +\frac{1}{r}[e(d)]^{2}\\
&\leq \Vert\log{L(P(d), d, S)}\Vert_{\mathbb{R}}^{2},
\end{split}
\end{displaymath}
which contradicts the maximality of $\Vert\log{W}\Vert_{\mathbb{R}}$. If there is a $W\in Z_{d}^{P(d)}(S)$ which does not satisfy the third bullet, then there's a monomial $n\in W$ satisfying $U_{\beta}(n)\not\subset W$. Choose a monomial $m\in U_{\beta}(n)\setminus W$. For the $P(d)$-dimensional subspace $W'$ of $S_{d}$ spanned by $N(W)\cup \{m\}\setminus \{n\}$, there is a sequence $\{a_{i}\}_{i=0}^{r}$ of integers satisfying 
\begin{displaymath}
\bigg\langle \log{W'}, \lambda_{i}\otimes_{\mathbb{Z}}1 \bigg\rangle_{\mathbb{R}} =c_{i}+a_{i}\quad\textrm{for all }0\leq i\leq r
\end{displaymath}
so that $a_{\beta}>0$, $a_{i}\leq 0$ for all $i\neq \beta$ and $\sum_{i=0}^{r}a_{i}=0$ by the definition of $U_{\beta}(n)$. From these hypotheses, we can derive an inequality
\begin{displaymath}
|\Delta_{W'}|^{2}-|\Delta_{W}|^{2}=\sum_{i=0}^{r} 2c_{i}a_{i} + a_{i}^{2}\geq \sum_{i=0}^{r} 2c_{\beta}a_{i} + a_{i}^{2} \geq a_{\beta}^{2} >0
\end{displaymath}
by the maximality of $c_{\beta}$, which leads us to a contradiction. Therefore, the third bullet holds for $d\geq D_{P}$. If there is $W\in Z_{d}^{P(d)}(S)$ which does not satisfy the second bullet, there are monomials $m'\in M_{d}\setminus W$ and $n'\in W$ such that $x_{\beta}^{d-l(d)}\mid m'$ and $x_{\beta}^{\lfloor\frac{d}{2}\rfloor}\nmid n'$. Consider $W''$ spanned by $N(W)\cup\{m'\}\setminus \{n'\}$. There is a sequence  $\{b_{i}\}_{i=0}^{r}$ of integers satisfying 
\begin{displaymath}
\bigg\langle \log{W''}, \lambda_{i}\otimes_{\mathbb{Z}}1 \bigg\rangle_{\mathbb{R}} =c_{i}+b_{i}\quad\textrm{for all }0\leq i\leq r.
\end{displaymath}
By the defining property of $m'$ and \eqref{asymptotic1}, $b_{\beta}>\lceil \frac{d}{2} \rceil -l(d)>\frac{d}{3}$ and $b_{i}\geq -d$ for all $i\neq \beta$. Also, $\sum_{0\leq i\leq r, i\neq \beta}c_{i}<2e(d)$ by the first bullet we have proved. We see that 
\begin{displaymath}
\begin{split}
|\Delta_{W''}|^{2}-|\Delta_{W}|^{2}&=\sum_{i=0}^{r} 2c_{i}b_{i} + b_{i}^{2}\\
&\geq 2c_{\beta}b_{\beta}+ \sum_{0\leq i\leq r,i\neq \beta} 2c_{i}b_{i}\\
&>2(dP(d)-2e(d))\cdot \frac{d}{3}-4de(d)\\
&=2(dP(d)-8e(d))\cdot \frac{d}{3}>0
\end{split}
\end{displaymath}
by \eqref{asymptotic1}; this leads us to another contradiction so that the second bullet holds for all $d\geq D_{P}$. We can prove the fourth bullet as we did in the proof of Lemma~\ref{opt1}.
\end{proof}

\section{Worst Unstable Hilbert Points for a Constant Hilbert Polynomial}
We would like to investigate whether Lemma~\ref{opt1} and Lemma~\ref{general1} are useful for finding worst unstable points (that is, whether, we have $Z_{d}^{Q_{P}(d)}(S)=X_{d}^{P}(S)$ for all $d\geq g_{P}$). In this section, we compute the Hilbert polynomial of an arbitrary $W\in Z_{d}^{P(d)}(S)$ to answer this question.
\begin{lemma}
\label{sharpness}
Suppose $W\in R(d, Q_{P}(d), S)$. Assume that $W^{\star}\in R(d, P(d), S)$ satisfies all bullets in Lemma~\ref{general1} and $d\geq D_{P}$. Let $J$ be the graded ideal of $S$ generated by $W$. Then $\dim_{k}(S/SW)_{t}=P(d)$ for all $t\geq d$. In particular, If $W^{\star}\in Z_{d}^{P(d)}(S)$ then the Hilbert polynomial of $S/SW$ is constant, whose value at an arbitrary integer is equal to $P(d)$.
\end{lemma}
\begin{proof}
The set $L=N(W^{\star})$ satisfies
$1\leq\min_{m\in L}\deg_{x_{0}}m$ by the second bullet of Lemma~\ref{general1} and
\begin{displaymath}
L=\bigcup_{m\in L} U_{0}(m)=\bigcup_{m\in L}\{n|n\in k[x_{1}, \ldots , x_{r}], n \textrm{ divides } m\upharpoonright_{x_{0}=1}\}\cap M_{d}
\end{displaymath}
by the third bullet of Lemma~\ref{general1}. By definition, $J=S(M_{d}\setminus N(W^{\star}))=S(M_{d}\setminus L)$. We claim that the Hilbert polynomial of $S/J$ is constant. We have the inclusion
\begin{displaymath}
 M_{d+1}\setminus J_{d+1}=M_{d+1}\setminus (S_{1}(M_{d}\setminus L))=M_{d+1}\setminus M_{1}\bigg[\bigcap_{m\in L}M_{d}\setminus U_{0}(m)\bigg]
\end{displaymath}
\begin{displaymath}
\supset M_{d+1}\setminus \bigg[\bigcap_{m\in L} M_{1}(M_{d}\setminus U_{0}(m))\bigg].
\end{displaymath}
It's easy to check that for every $m\in M_{d}$, $M_{1}(M_{d}\setminus U_{0}(m))=M_{d+1}\setminus U_{0}(x_{0}m)$ if $x_{0}|m$. Note that $x_{0}|m$ for every $m\in L$; therefore,
\begin{displaymath}
M_{d+1}\setminus J_{d+1}\supset \bigcup_{m\in L} U_{0}(x_{0}m).
\end{displaymath}
If there is a monomial $n\in M_{d+1}$ such that $n\notin \bigcup_{m\in L} U_{0}(x_{0}m)$ and $n\notin J_{d+1}$, then $\deg_{x_{0}}n=0$; otherwise, $\frac{n}{x_{0}} \notin \bigcup_{m\in L} U_{0}(m)$ so that $n/x_{0}\in J_{d}$, which implies that $n\in J_{d+1}$. Such a conclusion contradicts our assumption $n\notin J_{d+1}$. Note that $\deg m|_{x_{0}=1}\leq d-1$ for every $m\in L$ by the second bullet of Lemma~\ref{general1} so that $n/x_{j}\in J_{d}$ for every $x_{j}$ dividing $n$ because $\deg{n/x_{j}|_{x_{0}=1}}=d>\deg m|_{x_{0}=1}$; this leads us to another contradiction against the assumption $n\notin J_{d+1}$. Therefore,
\begin{displaymath}
M_{d+1}\setminus J_{d+1} = \bigcup_{m\in L} U_{0}(x_{0}m).
\end{displaymath}
Note that this set and $L$ have the same cardinality. We can dehomogenize both sides of the preceding equation by the first variable $x_{0}$ to compare their cardinality; that is,
\begin{displaymath}
\begin{split}
&\left|\bigcup_{m\in L} U_{1}(x_{1}m)\right|\\
&=\left| \bigcup_{m\in L}\{nx_{0}^{d+1-\deg n}|n\in k[x_{1}, \ldots , x_{r}], n \textrm{ divides } m\upharpoonright_{x_{0}=1}\}\cap M_{d+1}\right|\\
&=\left|\{nx_{0}^{d+1-\deg n}|n\in k[x_{1}, \ldots , x_{r}], n \textrm{ divides } m|_{x_{0}=1}\textrm{ for some }m\in L\}\cap M_{d+1} \right|\\
&=\left|\{nx_{0}^{d-\deg n}|n\in k[x_{1}, \ldots , x_{r}], n \textrm{ divides } m|_{x_{0}=1}\textrm{ for some }m\in L\}\cap M_{d} \right|\\
&=|L|.
\end{split}
\end{displaymath}
Thus 
\begin{displaymath}
\dim_{k}(S/J)_{d+1}=\dim_{k}(S/J)_{d}=P(d).
\end{displaymath}
Using the same argument, we can show that
\begin{displaymath}
M_{d+i+1}\setminus J_{d+i+1} = \bigcup_{m\in L} U_{0}(x_{0}^{i+1}m)
\end{displaymath}
if
\begin{displaymath}
M_{d+i}\setminus J_{d+i} = \bigcup_{m\in L} U_{0}(x_{0}^{i}m)
\end{displaymath}
and thus $\dim_{k}(S/J)_{d+i+1}=\dim_{k}(S/J)_{d+i}$ for all $i\in\mathbb{N}$. By induction, $\dim_{k}(S/J)_{t}=P(d)$ for all $t\geq d$.
\end{proof}
Now we explain when Lemma~\ref{general1} is useful to find worst unstable Hilbert points.
\begin{theorem}
\label{useless}
Suppose that $d\geq D_{P}$. Let us fix a Pl\"ucker embedding of $\textup{Hilb}^{P}(\mathbb{P}^{r}_{k})$ corresponding to $d$. The following statements are equivalent.
\begin{itemize}
\item[i)] A worst unstable point of $\textup{Gr}(Q_{P}(d), S_{d})$ is in $\textup{Hilb}^{P}(\mathbb{P}^{r}_{k})$.
\item[ii)] Every worst unstable point of $\textup{Gr}(Q_{P}(d), S_{d})$ is in $\textup{Hilb}^{P}(\mathbb{P}^{r}_{k})$.
\item[iii)] $P$ is a constant polynomial.
\end{itemize}
\end{theorem}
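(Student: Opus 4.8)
The plan is to prove the cycle of implications $\text{iii)} \Rightarrow \text{ii)} \Rightarrow \text{i)} \Rightarrow \text{iii)}$, where the two middle implications are essentially free and the real content lies in the two outer ones. The implication $\text{ii)} \Rightarrow \text{i)}$ is immediate, since the set of worst unstable points of a Grassmannian is always nonempty (every unstable locus is stratified and the top stratum is witnessed by some point), so "every" trivially yields "some". For $\text{i)} \Rightarrow \text{iii)}$ I would argue contrapositively: assuming $P$ is non-constant, I want to show that no worst unstable point lands in $\textup{Hilb}^{P}(\mathbb{P}^{r}_{k})$, which is exactly where Lemma~\ref{sharpness} does the work.

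**First I would** handle $\text{iii)} \Rightarrow \text{ii)}$. Suppose $P$ is constant. By Theorem~\ref{foundation}, every worst unstable point of $\textup{Gr}(S_{d}, Q(d))$ lies in the orbit of some $W \in Z_{d}^{Q(d)}(S)$, and by Lemma~\ref{duality} such $W$ are precisely the $\star$-duals of elements of $Z_{d}^{P(d)}(S)$. Now Lemma~\ref{opt1} applies (this is the branch of Lemma~\ref{truncate1} requiring constant $P$): it tells us that $W^{\star} \in Z_{d}^{P(d)}(S)$ satisfies a Borel-fixity condition and the sharp bound $\max c_i = dP - e$. I would then verify that $W^{\star}$ satisfies all bullets of Lemma~\ref{general1} (Lemma~\ref{opt1} gives the strong form, so the weaker bullets follow), which lets me invoke Lemma~\ref{sharpness}: the ideal $J = SW$ has $\dim_k(S/J)_t = P(d) = P$ for all $t \geq d$. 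Since $P$ is constant and equals the value $P(t)$ for all large $t$, this says precisely that the Hilbert polynomial of $S/J$ is $P$, so $[W] \in \textup{Hilb}^{P}(\mathbb{P}^{r}_{k})$. As this holds for every orbit representative, $\text{ii)}$ follows.

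**The core of $\text{i)} \Rightarrow \text{iii)}$** runs the same machinery but reads Lemma~\ref{sharpness} as an obstruction. Suppose a worst unstable point lies in $\textup{Hilb}^{P}(\mathbb{P}^{r}_{k})$; by Theorem~\ref{foundation} and Lemma~\ref{duality} it is (up to orbit) some $W$ with $W^{\star} \in Z_{d}^{P(d)}(S)$, and Lemma~\ref{general1} guarantees $W^{\star}$ satisfies all its bullets. Then Lemma~\ref{sharpness} forces $\dim_k(S/SW)_t = P(d)$, a \emph{constant} in $t$, for all $t \geq d$. But membership in $\textup{Hilb}^{P}(\mathbb{P}^{r}_{k})$ means the saturated ideal generated by $W$ has Hilbert polynomial $Q$, equivalently $S/SW$ has Hilbert polynomial $P$; comparing the constant value $P(d)$ against the genuine polynomial $P(t)$ for large $t$ shows $P$ must be constant.

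**The main obstacle** I anticipate is bookkeeping the difference between the literal dimension $\dim_k(S/J)_t$ supplied by Lemma~\ref{sharpness} and the Hilbert \emph{polynomial} of the saturated ideal defining a point of $\textup{Hilb}^{P}(\mathbb{P}^{r}_{k})$. A worst unstable Hilbert point is represented by a saturated ideal $I$ with $I_d = W$, and one must check that $\dim_k(S/J)_t$ agreeing with $P(d)$ for all $t \geq d$ genuinely pins down the Hilbert \emph{polynomial} as the constant $P(d)$ rather than merely matching it in a bounded range — this is where the "for all $t \geq d$" clause in Lemma~\ref{sharpness}, rather than a single degree, is indispensable, and where I would be most careful to ensure $d \geq D_P \geq g_P$ so that saturation and the Gotzmann regularity bound let me identify the eventual dimension with the Hilbert polynomial value.
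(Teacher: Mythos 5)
Your proposal is correct and follows essentially the same route as the paper's own proof: Theorem~\ref{foundation} and Lemma~\ref{duality} reduce to monomial subspaces $W$ with $W^{\star}\in Z_{d}^{P(d)}(S)$, Lemma~\ref{general1} supplies the hypotheses of Lemma~\ref{sharpness}, and the resulting constancy of $\dim_{k}(S/SW)_{t}$ for all $t\geq d$ (together with $d\geq D_{P}\geq g_{P}$) drives both iii)$\Rightarrow$ii) and i)$\Rightarrow$iii), exactly as in the paper. One minor correction: your parenthetical claim that the bullets of Lemma~\ref{general1} follow from Lemma~\ref{opt1} is not accurate for the second and third bullets (the divisibility and $U_{k}(n)$ conditions are not part of its conclusion), but this is harmless because Lemma~\ref{general1} applies directly to any element of $Z_{d}^{P(d)}(S)$ once $d\geq D_{P}$, which is all the argument needs.
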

\begin{proof}
ii)$\Longrightarrow$i) is trivial. Suppose $P$ is a constant polynomial and $W\in Z_{d}^{Q_{P}(d)}(S)$. Lemma~\ref{sharpness} and Lemma~\ref{general1} imply that $W\in\textup{Hilb}^{P}(\mathbb{P}^{r}_{k})$ under the Pl\"ucker embedding corresponding to $d\geq D_{P}\geq g_{P}$. Theorem~\ref{foundation} implies iii)$\Longrightarrow$ii). If i) is true, there is $W\in Z_{d}^{Q_{P}(d)}(S)$ such that $W\in\textup{Hilb}^{P}(\mathbb{P}^{r}_{k})$. Then the Hilbert polynomial of $S/SW$ is equal to $P$ by the definition.  By Lemma~\ref{duality}, $W^{\star}\in Z^{P(d)}_{d}(S)$ so that the Hilbert polynomial of $S/SW$ is constant so that $P$ is a constant polynomial by Lemma~\ref{sharpness}. As a result, i)$\Longrightarrow$iii) is true.
\end{proof}
Consequently, we cannot use both Lemma~\ref{general1} and Lemma~\ref{opt1} to describe worst unstable points of $\textup{Hilb}^{P}(\mathbb{P}^{r}_{k})$ when $P$ is non-constant while Lemma~\ref{opt1} is still useful when $P$ is constant.
\begin{definition}
Let $S'=k[x_{1}, \ldots, x_{r}]$. Define $\rho:\mathbb{N}\rightarrow\mathbb{N}$
\begin{displaymath}
\rho(d)=\binom{r+l(d)}{r}-P(d)=Q_{P}(d)+\binom{r+l(d)}{r}-\binom{r+d}{r}
\end{displaymath}
for all $d\geq g_{P}$. Note that $\rho$ is constant for $d\geq g_{P}$ if $P$ is constant.
\end{definition}
\begin{theorem}
\label{constant}
Suppose that $P$ is a constant polynomial and $d\geq D_{P}$. For an arbitrary worst unstable point $W$ of $\textup{Hilb}^{P}(\mathbb{P}^{r}_{k})$ with respect to $d$, there are $g\in \textup{GL}_{r+1}(k)$ and $W'\in Z_{l(d)}^{\rho(d)}(S')$ such that
\begin{displaymath}
g.W=x_{0}^{d-l(d)}W'+A\left(\binom{r+l(d)}{r}, d, S\right)
\end{displaymath}
and $S(g.W)$ is a Borel-fixed ideal. In particular, the algebraic equation defined by the graded ideal $SW$ has a unique solution in $\mathbb{P}_{k}^{r}$.
\end{theorem}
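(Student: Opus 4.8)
The plan is to transport the whole problem to the dual subspace $W^{\star}$ and then feed it into the lexicographic maximality already proved. First I would invoke Theorem~\ref{foundation} together with Theorem~\ref{useless}: since $P$ is constant and $d\geq D_{P}\geq g_{P}$, every worst unstable point of $\textup{Gr}(S_{d},Q(d))$ lies in $\textup{Hilb}^{P}(\mathbb{P}^{r}_{k})$, so $X_{d}^{P}(S)=Z_{d}^{Q(d)}(S)$; hence after replacing $W$ by $g_{0}.W$ for a suitable $g_{0}$ (to be absorbed into the final $g$) I may assume $W\in Z_{d}^{Q(d)}(S)$ is spanned by monomials. By Lemma~\ref{duality} its complement $W^{\star}$ lies in $Z_{d}^{P(d)}(S)$, and Lemma~\ref{opt1} applies to $W^{\star}$: after a further permutation (again absorbed into $g$) I may assume that the largest coordinate of $\mathbf{c}^{W^{\star}}$ equals $dP-e$ and is attained at $x_{0}$, and that the generated ideal is Borel-fixed.

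Next I would extract the lexicographic sandwich. Because $\langle\mathbf{c}^{W^{\star}},\lambda_{0}\otimes_{\mathbb{Z}}1\rangle_{\mathbb{R}}=dP-e$ realizes the maximum in Lemma~\ref{maximality}, that lemma forces
\[
L_{\binom{r+l-1}{r},d}(S)\subset W^{\star}\subset L_{\binom{r+l}{r},d}(S).
\]
Reading this off the monomial basis, every monomial of $W^{\star}$ has $\deg_{x_{0}}\geq d-l$, all monomials with $\deg_{x_{0}}\geq d-l+1$ already belong to $W^{\star}$, and the remaining ones lie in the single layer $\deg_{x_{0}}=d-l$, so they form $x_{0}^{d-l}T$ for a monomial set $T\subset S'_{l}$ with $|T|=P-\binom{r+l-1}{r}=\dim_{k}S'_{l}-\rho$, the last equality coming from Pascal's identity and the definition of $\rho$. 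Taking complements in $S_{d}$ then yields exactly $g.W=x_{0}^{d-l}W'+A_{\binom{r+l}{r},d}(S)$ with $W'=S'_{l}\setminus T$ of dimension $\rho$, which is the asserted shape.

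The substantive step is to identify $W'$ as a worst unstable point over the smaller ring $S'=k[x_{1},\dots,x_{r}]$, and I expect this optimization-reduction to be the main obstacle. Here I would write $|\Delta_{W^{\star}}|^{2}=\sum_{i=0}^{r}c_{i}^{2}$ and note that $c_{0}=dP-e$ is fixed, while for $i\geq 1$ one has $c_{i}=f+\mathbf{c}^{T}_{i}$, where $f$ is the common contribution of the symmetric layer $L_{\binom{r+l-1}{r},d}(S)=\{\deg_{x_{0}}\geq d-l+1\}$ to each coordinate $x_{1},\dots,x_{r}$ (this set is invariant under permuting $x_{1},\dots,x_{r}$, which is what makes the $f$ uniform) and $\mathbf{c}^{T}$ is the exponent vector of $T$ read inside $S'$. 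Since $\sum_{i\geq 1}\mathbf{c}^{T}_{i}=l|T|$ is constant, expanding the square shows that the cross term $2f\sum_{i\geq1}\mathbf{c}^{T}_{i}$ and the term $rf^{2}$ are both constant, so maximizing $|\Delta_{W^{\star}}|^{2}$ collapses to maximizing $|\Delta_{T}|^{2}$ over monomial subsets of $S'_{l}$ of cardinality $|T|$. Thus $T\in Z_{l}^{|T|}(S')$, and applying Lemma~\ref{duality} inside $S'$ (with $l$ in place of $d$) converts this into $W'=T^{\star}\in Z_{l}^{\rho}(S')$. The whole argument therefore hinges on first securing the sandwich of the previous paragraph, since it is precisely the fixed symmetric layer that decouples the optimization.

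Finally, Borel-fixedness of $S(g.W)$ is exactly the last clause of Lemma~\ref{opt1} applied to $W^{\star}$. For the scheme-theoretic statement I would use the shape of $g.W$ directly: the ideal $S(g.W)$ contains every degree-$d$ monomial with $\deg_{x_{0}}<d-l$, hence it contains $x_{j}^{d}$ for each $j\geq 1$ (as $l<d$), so its zero locus is cut out set-theoretically by $x_{1}=\dots=x_{r}=0$ and equals the single point $[1:0:\dots:0]\in\mathbb{P}^{r}_{k}$. Pulling back by $g^{-1}$ shows that the scheme defined by $SW$ is likewise supported at one point, completing the proof.
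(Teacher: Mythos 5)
Your proposal follows essentially the same route as the paper's proof: reduce to $Z_{d}^{Q(d)}(S)$ via Theorem~\ref{foundation} and Theorem~\ref{useless}, apply Lemma~\ref{opt1} (through Lemma~\ref{duality}) to get the extremal coordinate $dP-e$ and Borel-fixedness, use Lemma~\ref{maximality} to obtain the lex sandwich $L_{\binom{r+l-1}{r},d}(S)\subset W^{\star}\subset L_{\binom{r+l}{r},d}(S)$ and hence the shape $x_{0}^{d-l}W'+A_{\binom{r+l}{r},d}(S)$, and then decouple the squared-norm maximization to conclude $W'\in Z_{l}^{\rho}(S')$. Your decoupling is carried out on the complementary set $T\subset S'_{l}$ and then dualized inside $S'$, while the paper expands $\Vert\mathbf{c}^{g.W}\Vert_{\mathbb{R}}^{2}$ directly in terms of $\mathbf{c}^{W'}$, but the two computations are identical in substance (both rest on the cross terms being constant because the coordinate sums $\sum_{i\geq 1}\mathbf{c}^{T}_{i}=l|T|$, respectively $l\rho$, are fixed), and your treatment of the zero locus matches the paper's conclusion as well.
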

\begin{proof}
By Theorem~\ref{foundation}, there is $g_{0}\in \textup{GL}_{r+1}(k)$ such that $g_{0}.W\in Z^{Q_{P}(d)}_{d}(S)$. By Lemma~\ref{opt1} and Theorem~\ref{useless}, there is a permutation matrix $q\in \textup{GL}_{r+1}(k)$ such that $qg_{0}.W$ is Borel-fixed and $\langle \log (qg_{0}.W)^{\star}, \lambda_{0}\otimes_{\mathbb{Z}}1\rangle_{\mathbb{R}}=dP(d)-e(d)$. Let $g=qg_{0}$. By Lemma~\ref{maximality}, 
\begin{displaymath}
L\left(\binom{r+l(d)-1}{r}, d, S\right)\subset (g.W)^{\star}\subset L\left(\binom{r+l(d)}{r}, d, S\right).
\end{displaymath}
Applying $^{\star}$ to each subspace of $S_{d}$ in the preceding equation, we have
\begin{displaymath}
A\left(\binom{r+l(d)-1}{r}, d, S\right)\supset g.W \supset A\left(\binom{r+l(d)}{r}, d, S\right).
\end{displaymath}
By the construction of $A(t, d, S)$, $\deg_{x_{0}}{m}=d-l(d)$ for all $m\in N(g.W)\setminus N\left(A\left(\binom{r+l(d)}{r}, d, S\right)\right)$. Therefore, there is $W'\in R(l(d), \rho(d), S')$ such that 
\begin{displaymath}
g.W=x_{0}^{d-l(d)}W'+A\left(\binom{r+l(d)}{r}, d, S\right).
\end{displaymath}
We can prove that $W'\in Z_{l(d)}^{\rho(d)}(S)$ using the maximality of $\Vert\log{g.W}\Vert_{\mathbb{R}}$; indeed, we can express each component $\langle \log{g.W}, \chi_{i}\otimes_{\mathbb{Z}}1\rangle_{\mathbb{R}}$ of $\log{g.W}$ in terms of $\langle \log{W'}, \chi_{i}\otimes_{\mathbb{Z}}1\rangle_{\mathbb{R}}$ for all $0\leq i\leq r$ as follows:
\begin{displaymath}
\begin{split}
\Vert\log{g.W}\Vert_{\mathbb{R}}^{2}&=\left\Vert\log{x_{0}^{d-l(d)}W'}+\log{A\left(\binom{r+l(d)}{r}, d, S\right)} \right\Vert_{\mathbb{R}}^{2}\\
&=\left[\rho(d)(d-l(d))+\sum_{j=l+1}^{d}\binom{r+j-1}{r-1}(d-j)\right]^{2}\\
&\quad +\sum_{i=1}^{r}\left[\langle \log{W'}, \chi_{i}\otimes_{\mathbb{Z}}1\rangle_{\mathbb{R}}+\frac{1}{r}\sum_{j=l+1}^{d}\binom{r+j-1}{r-1}j\right]^{2}\\
&=\Vert\log{W'}\Vert_{\mathbb{R}}^{2}+\left[\rho(d)(d-l(d))+\sum_{j=l+1}^{d}\binom{r+j-1}{r-1}(d-j)\right]^{2}\\
&\quad +\frac{1}{r}\left[\sum_{j=l+1}^{d}\binom{r+j-1}{r-1}j\right]^{2}+\frac{2l(d)\rho(d)}{r}\sum_{j=l+1}^{d}\binom{r+j-1}{r-1}j.
\end{split}
\end{displaymath}
Let's prove the last sentence of the theorem. Lemma~\ref{general1} implies that $x_{0}$ divides every monomial in $W\in Z_{d}^{P(d)}(S)$ if $d\geq D_{P}$, so that $x_{i}^{d}\in A\left(\binom{r+l(d)}{r}, d, S\right)$ for all $1\leq i\leq r$. Therefore, the ideal generated by $W$ has a unique solution $[1:0:\ldots : 0]$.
\end{proof}
 As mentioned in the introduction, Theorem~\ref{constant} means that every zero dimensional projective scheme which is represented by a worst unstable Hilbert point has a unique closed point. If we set $r=2$, we get the following consequence.
\begin{corollary}
\label{planepoint}
Let $c$ be a positive integer, which we view as a constant Hilbert polynomial. For every worst unstable point $x$ of $\textup{Hilb}^{c}(\mathbb{P}^{2}_{k})$, there is $h\in\textup{GL}_{3}(k)$ such that $h.x$ represents a projective scheme corresponding to the ideal generated by $A\left(c, g_{c}, S\right)$. In particular, the solution of the algebraic equation defined by the ideal of $S$ generated by $A\left(c, g_{c}, S\right)$ is unique so that the projective scheme represented by $x$ has a unique closed point.
\end{corollary}
\begin{proof}
We see that $h. x=A\left(c, d, S\right)$ for some $h\in\textup{GL}_{3}(k)$ if $x$ is a worst unstable point of $\textup{Hilb}^{c}(\mathbb{P}^{2}_{k})$ with respect to $d\geq D_{P}$ by Theorem~\ref{constant}. We also see that $A\left(c, g_{c}, S\right)$ and $A\left(c, d, S\right)$ define the same projective scheme by \cite[(1.2)(iii)]{Gotzmann}.
\end{proof}

\section{An Application of Murai's Result}
We described the form of almost all monomials in a point $W\in Z_{d}^{Q_{P}(d)}(S)$ when the Hilbert polynomial $P$ is constant in Theorem~\ref{constant}. For non-constant Hilbert polynomials, however, we have to make use of the \textit{minimal growth condition} of Hilbert points described in  \cite[(1) on page 844]{Murai}.
\begin{theorem}
\label{murai}
Suppose $M\subset M_{d}$ generates an ideal $I$. Let $P$ and $Q_{P}$ be the Hilbert polynomials of $S/I$ and $I$, respectively. Assume $d\geq g_{P}$. If $[\wedge^{Q_{P}(d)}I_{d}]\in \textup{Hilb}^{P}(\mathbb{P}^{r}_{k})$ and $\binom{r+\delta(d) -1 }{r}< Q_{P}(d)\leq\binom{r+\delta(d) }{r}$ for some $0\leq\delta(d)\leq d$ then there is a monomial $m$ of degree $d-\delta(d)$ such that $m$ divides any monomial in $M$ and there is $0\leq i\leq r$ satisfying $U_{i}(n)\subset M_{d}\setminus M$ for all $n\in M_{d}\setminus M$. The converse of the preceding sentence is true when $r=2$.
\end{theorem}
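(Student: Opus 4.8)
The plan is to bridge the geometric membership condition with a purely combinatorial minimal-growth condition, invoke Murai's structural result for the forward implication, and then treat the converse by a direct shadow count that is special to the planar case.

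First I would reduce the hypothesis ``$[\wedge^{Q(d)}I_{d}]\in\textup{Hilb}^{P}(\mathbb{P}^{r}_{k})$'' to the Gotzmann minimal-growth condition on the monomial set $M$. Since $d\geq g_{P}$, the embedding $\textup{Hilb}^{P}(\mathbb{P}^{r}_{k})\hookrightarrow\textup{Gr}(S_{d},Q(d))$ from the introduction cuts out the Hilbert scheme by the requirement that the ideal $I=SM$ grow minimally in one step, namely $\dim_{k}(S_{1}M)=\binom{r+d+1}{r}-P(d+1)=Q(d+1)$; by Gotzmann's persistence theorem (\cite{Gotzmann}) this single equality already forces the Hilbert function of $S/I$ to agree with $P$ in every degree $\geq d$. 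Thus the condition $[\wedge^{Q(d)}I_{d}]\in\textup{Hilb}^{P}(\mathbb{P}^{r}_{k})$ is equivalent to saying that $M$ is a Gotzmann monomial set in degree $d$, and the theorem becomes a structural statement about such sets.

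For the forward implication, valid for all $r$, I would apply \cite[Proposition 8]{Murai} directly to this Gotzmann set. The index $\delta$ fixed by $\binom{r+\delta-1}{r}<Q(d)\leq\binom{r+\delta}{r}$ is exactly Murai's parameter, and his description of Gotzmann sets supplies both a common monomial factor $m$ of degree $d-\delta$ dividing every element of $M$ and, for each standard monomial $n\in M_{d}\setminus M$, a coordinate direction $i$ with the entire cone $U_{i}(n)$ contained in $M_{d}\setminus M$. It is worth recording that the degree $d-\delta$ of $m$ is forced by the Macaulay inequality bounding $Q(d)$: from $|M|=Q(d)\leq\binom{r+\delta}{r}=|m\cdot S_{\delta}|$ the set $M$ fits inside the degree-$d$ monomials divisible by $m$, while $Q(d)>\binom{r+\delta-1}{r}$ excludes any common factor of strictly larger degree, since such a factor $m'$ of degree $d-\delta'$ with $\delta'<\delta$ would give $|M|\leq\binom{r+\delta'}{r}\leq\binom{r+\delta-1}{r}<Q(d)$, a contradiction.

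The main work is the converse for $r=2$, where I must show that the two structural conditions force minimal growth, hence membership in the Hilbert scheme. I would prove this by computing $\dim_{k}(S_{1}M)=|x_{0}M\cup x_{1}M\cup x_{2}M|$ directly and verifying that it attains the Macaulay lower bound $\binom{d+3}{2}-P(d+1)$. The $U_{i}$-cone conditions are precisely what make the three shadows $x_{0}M,x_{1}M,x_{2}M$ overlap maximally: in three variables the complement $M_{d}\setminus M$ is a downward-closed region in the triangular array of degree-$d$ monomials whose chosen directions can be organised into a single monotone staircase, so the degree-$(d+1)$ standard monomials are themselves cut out by cones of the form $U_{i}(x_{i}n)$ and are counted with no excess, while the common factor $m$ fixes the global position. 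The hard part is exactly this overlap count, and it is where $r=2$ is essential: in three variables the planar complement is rigid enough that its one-step shadow is determined by its boundary, whereas for $r\geq 3$ the local $U_{i}$ conditions leave room in the higher-dimensional staircase for the shadows $x_{i}M$ to fail to overlap maximally, so minimal growth can break down and the converse is false.
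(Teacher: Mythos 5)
Your forward implication is fine, and it is exactly what the paper does: after the reduction you describe (for $d\geq g_{P}$, membership of $[\wedge^{Q(d)}I_{d}]$ in $\textup{Hilb}^{P}(\mathbb{P}^{r}_{k})$ is, by Gotzmann persistence, equivalent to $M$ being a Gotzmann monomial set of size $Q(d)$), the statement is precisely \cite[Proposition 8]{Murai}, and the paper's entire proof is that citation --- for \emph{both} directions, the converse for $r=2$ included. Your supplementary remark pinning down the degree $d-\delta$ of the common factor via the Macaulay bounds is correct but not needed once Murai is invoked.

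The genuine gap is in your converse for $r=2$, where you depart from the paper (which cites Murai here too) and attempt a direct proof that you do not complete. Everything reduces to the count $|x_{0}M\cup x_{1}M\cup x_{2}M|=Q(d+1)$, and your justification for it consists of assertions: that the $U_{i}$-cone conditions make the three shadows ``overlap maximally,'' that the chosen directions for the points of $M_{d}\setminus M$ ``can be organised into a single monotone staircase,'' and that the degree-$(d+1)$ standard monomials ``are counted with no excess.'' None of these is proved, and the staircase claim is the crux: condition (b) only provides, for each $n\in M_{d}\setminus M$, \emph{some} direction $i$ with $U_{i}(n)\subset M_{d}\setminus M$, and different $n$ may demand different, a priori incompatible directions. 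Showing that these local choices cohere globally in three variables --- and that coherence forces the shadow count to hit the Gotzmann bound --- is exactly the nontrivial combinatorial content of Murai's three-variable characterization, so as written your sketch assumes what it must prove. To repair the proposal, either carry out this counting argument in full (it is a real piece of work, not a routine inclusion-exclusion), or simply cite \cite[Proposition 8]{Murai} for the converse as well, as the paper does.
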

 The preceding theorem is \cite[Proposition 8]{Murai}. Although Lemma~\ref{general1} cannot be used to describe worst unstable points for general $P$ by Theorem~\ref{useless}, such a fact means that describing the {\it shape} of a general Gotzmann set of monomials \cite{Murai} helps us to find general worst unstable Hilbert points. A property of an arbitrary Hilbert point which is a wedge of monomials has been described in Theorem~\ref{murai}. In this section, we modify Theorem~\ref{sharpness} to describe general worst unstable Hilbert points for some non-constant $P$. 
\begin{definition}
Let us introduce some notation. Put $p(d)=\binom{r+\delta(d)}{r}-Q_{P}(d)$ and $\alpha(d)=P(d)-p(d)=\binom{r+d}{r}-\binom{r+\delta(d)}{r}$. The polynomial $Q_{P}$ is the Hilbert polynomial of the ideal $S(L(Q_{P}(d), d, S))$ for all $d\geq g_{P}$, as explained in \cite{Gotzmann}. By the construction of $\delta$, there is a relation $x_{0}^{d-\delta(d)}x_{r}^{\delta(d)}\leq_{\textup{lex}} \mu(Q_{P}(d), d)<_{\textup{lex}} x_{0}^{d-\delta(d)+1}x_{r}^{\delta(d)-1}$. The monomial $\nu=\mu(Q_{P}(d), d)\upharpoonright_{x_{0}=1}\in S'$ is the $Q_{P}(d)-\binom{r+\delta(d)-1}{r}$'th largest monomial in $S'_{\delta(d)}$, with respect to the lexicographic order of monomials in $S'$ induced by the inclusion $S'\subset S$. Thus, for any $t\geq 0$, 
\begin{displaymath}
\begin{split}
\dim_{k} &\left(S'L\left(Q_{P}(d)-\binom{r+\delta(d)-1}{r}, \delta(d), S'\right)_{\delta(d+t)}\right)\\
&=|\{m\in S'_{\delta(d+t)}|m\textrm{ is a monomial and } m\geq_{\textup{lex}} x_{r}^{t}\nu \}|\\
&=\left|\{m\in M_{d+t}| x_{r}^{t}\mu(Q_{P}(d), d)\leq_{\textup{lex}}m<_{\textup{lex}} x_{0}^{d-\delta(d)+1}x_{r}^{\delta(d+t)-1}\}\right|\\
&=Q_{P}(d+t)-\binom{r+\delta(d+t)-1}{r}\\
&=\binom{r+\delta(d+t)-1}{r-1}-p(d+t).
\end{split}
\end{displaymath}
Therefore, $\binom{r+t'-1}{r-1}-p(t'+\gamma)$ is the Hilbert polynomial of the ideal $J$ of $S'$ generated by $L\left(Q_{P}(d)-\binom{r+\delta(d)-1}{r}, \delta(d), S'\right)$. 
Since $p(t+\gamma)\in\mathbb{Q}[t]$ is a Hilbert polynomial, we can define $l$ and $e$ for $p(t+\gamma)$ in $S'$ as we did for $P$ in $S$ before. Let us denote these functions by $l'$ and $e'$ respectively. That is, $l'(d)$ is the integer satisfying
\begin{displaymath}
\sum_{0\leq i\leq l'(d)-1}{\binom{r+i-2}{r-2}} < p(d+\gamma) \leq \sum_{0\leq i\leq l'(d)}{\binom{r+i-2}{r-2}}
\end{displaymath}
and
\begin{displaymath}
e'(d)=\bigg[p(d+\gamma)-\sum_{0\leq i\leq l'(d)-1}{\binom{r+i-2}{r-2}}\bigg]l'(d)+\sum_{0\leq i\leq l'(d)-1}{\binom{r+i-2}{r-2}}i.
\end{displaymath}
On the other hand, define
\begin{displaymath}
\epsilon(d)=\sum_{i=\delta(d)+1}^{d}{\binom{r+i-1}{r-1}i}.
\end{displaymath}
We can easily see that
\begin{displaymath}
\log{A\left(\binom{r+\delta(d)}{r},d, S\right)}=\bigg(d\alpha(d) -\epsilon(d), \frac{\epsilon(d)}{r}, \ldots, \frac{\epsilon(d)}{r} \bigg).
\end{displaymath}
If $\gamma =0$ then $P=p$ and $\alpha=0=\epsilon$. Also, worst unstable points of $\textup{Gr}(Q_{P}(d), S_{d})$ satisfy the conclusion of Theorem~\ref{murai}, as we have seen in Lemma~\ref{general1}. Let's assume that $\gamma\geq 1$. Define the second \textit{discriminant} $\Omega:\mathbb{N}\rightarrow\mathbb{Z}$ and the \textit{sum} $C:\mathbb{N}\rightarrow\mathbb{Z}$ as follows:
\begin{displaymath}
\Omega(d)=\bigg(p(d)\gamma +d\alpha(d) -\epsilon(d) -\frac{\epsilon(d)}{r} -p(d)\delta(d)\bigg)^{2}-8p(d)\delta(d)e'(d) +8[e'(d)]^{2}
\end{displaymath}
and
\begin{displaymath}
C(d)=-p(d)\gamma-d\alpha(d)+\epsilon(d)+\frac{\epsilon(d)}{r}+p(d)\delta(d).
\end{displaymath}
\end{definition}

In Theorem~\ref{opt2}, we will see that $\Omega$ and $C$ are actually a discriminant and the sum of the roots of some quadratic equation, respectively. These functions have the following asymptotic behavior.
\begin{lemma}
\label{truncate2}
If $\gamma\neq 0$ then 
\begin{equation}
\lim_{d\rightarrow\infty}{\frac{p(d)\delta(d)}{C(d)+\sqrt{\Omega(d)}}}=0,
\end{equation}
\begin{equation}
\lim_{d\rightarrow\infty}{\frac{p(d)}{P(d)}}=0
\end{equation}
and
\begin{equation}
\label{e2}
\lim_{d\rightarrow\infty}\frac{e'(d)}{dp(d+\gamma)}=0.
\end{equation}
Furthermore, if $p$ is a constant,
\begin{equation}
\lim_{d\rightarrow\infty}\frac{C(d)-\sqrt{\Omega(d)}}{2}=0^{+}.
\end{equation}
\end{lemma}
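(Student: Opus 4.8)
The plan is to reduce every assertion to a comparison of the degrees, as polynomials in $d$, of the functions $p$, $\alpha$, $\epsilon$, $e'$ and $C$, exactly in the spirit of the proof of Lemma~\ref{truncate1}. The decisive input, and the step I expect to be the main obstacle, is the sharp bound $\deg p\leq r-2$. The naive estimate $0\leq p(d)<\binom{r+\delta(d)-1}{r-1}$, which comes straight from the defining inequality $\binom{r+\delta-1}{r}<Q(d)\leq\binom{r+\delta}{r}$ of $\delta$, only gives $\deg p\leq r-1$, and that is not enough. To obtain the extra drop I would invoke the identification, established just before the lemma, of $p(t+\gamma)$ with the Hilbert polynomial of $S'/J$, where $J\subset S'=k[x_{1},\ldots,x_{r}]$ is generated by the $\binom{r+\delta-1}{r-1}-p(d)$ smallest monomials of degree $\delta$. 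Since $p(d)<\binom{r+\delta-1}{r-1}$ this count is positive, so $J\neq 0$; hence $V(J)$ is contained in a hypersurface of $\mathbb{P}^{r-1}$, and $\deg p=\dim\textup{Proj}(S'/J)\leq r-2$. (For $r=1$ one has $p\equiv 0$ and the substantive content is vacuous, so I work with $r\geq 2$.)

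Once $\deg p\leq r-2$ is in hand, statement $\lim p/P=0$ is immediate: writing $P=\alpha+p$ and noting that $\alpha(d)=\binom{r+d}{r}-\binom{r+d-\gamma}{r}$ has degree exactly $r-1$ with positive leading coefficient $\gamma/(r-1)!$ (this uses $\gamma\geq 1$), we get $\deg P=r-1>\deg p$, whence $p/P\to 0$. For the limit $e'(d)/\big(d\,p(d+\gamma)\big)\to 0$ I would run the argument of Lemma~\ref{truncate1} verbatim in the ring $S'$: the definition of $e'$ yields $e'(d)\leq l'(d)\,p(d+\gamma)$, so it suffices to prove $l'(d)/d\to 0$. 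This is again where $\deg p\leq r-2$ is essential: choosing $\tfrac{r-2}{r-1}<\eta<1$ one checks $p(d+\gamma)<\binom{r-1+\lceil d^{\eta}\rceil}{r-1}$ for $d\gg 0$, whence $l'(d)\leq\lceil d^{\eta}\rceil=o(d)$. This delivers both $e'(d)/\big(d\,p(d+\gamma)\big)\to 0$ and the cruder bound $e'=o(d^{r-1})$ that I will need below.

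For the limit $p\delta/(C+\sqrt{\Delta})\to 0$ the main computation is the leading behaviour of $C$. The two degree-$r$ contributions are $-d\alpha\sim-\frac{\gamma}{(r-1)!}d^{r}$ and $\frac{r+1}{r}\epsilon\sim\frac{(r+1)\gamma}{r!}d^{r}$, using $\epsilon(d)=\sum_{i=\delta+1}^{d}\binom{r+i-1}{r-1}i\sim\frac{\gamma}{(r-1)!}d^{r}$, and they combine to $C\sim\frac{\gamma}{r!}d^{r}$; in particular $C>0$ for $d\gg 0$. Since $p\delta=O(d^{r-1})$ and $e'=o(d^{r-1})$, the correction terms in $\Delta=C^{2}-8p\delta e'+8(e')^{2}$ are $o(d^{2r-2})=o(C^{2})$, so $\Delta=C^{2}(1+o(1))$, $\sqrt{\Delta}\sim C$, and $C+\sqrt{\Delta}\sim 2C\sim\frac{2\gamma}{r!}d^{r}$. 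As the numerator $p\delta$ has degree at most $r-1$, the ratio $p\delta/(C+\sqrt{\Delta})$ is $O(d^{-1})\to 0$.

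Finally, for the last assertion I assume $p\equiv p_{0}$ constant, so $l'$ and $e'$ are eventually constant, say $e'=e_{0}'$. Rationalizing,
\[
\frac{C-\sqrt{\Delta}}{2}=\frac{C^{2}-\Delta}{2(C+\sqrt{\Delta})}=\frac{4e_{0}'\big(p_{0}\delta-e_{0}'\big)}{C+\sqrt{\Delta}},
\]
whose numerator has degree at most $1$ while the denominator is $\sim\frac{2\gamma}{r!}d^{r}$; hence the quotient is $O(d^{1-r})\to 0$ for $r\geq 2$. For the sign, note $e_{0}'\geq 0$ and $p_{0}\delta\geq e_{0}'$ for $d\gg 0$ (indeed $e_{0}'>0$ forces $p_{0}\geq 1$, so $p_{0}\delta\to\infty$), so $C-\sqrt{\Delta}\geq 0$ throughout and the limit is approached from above, giving the $0^{+}$ claim. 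The only genuinely new ingredient in the whole argument is the dimension drop $\deg p\leq r-2$; everything else is the same asymptotic bookkeeping already carried out for Lemma~\ref{truncate1}.
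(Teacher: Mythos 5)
Your proof is correct, and it reaches the decisive input by a genuinely different route than the paper. The paper never isolates the bound $\deg p\leq r-2$ as such; instead it proves the equivalent statement that $P$ has leading term $\frac{\gamma}{(r-1)!}d^{r-1}$ (equation \eqref{leadt}) by combinatorics of the lex-segment ideal: writing its last minimal generator $\mu=x_{0}^{a_{0}-1}x_{1}^{a_{1}-a_{0}}\cdots x_{n}^{a_{n}-a_{n-1}+1}$, it deduces $a_{0}=\gamma+1$ from the defining inequality of $\delta$, then reads the leading term of $P=\binom{r+d}{r}-Q$ off the Macaulay representation $Q(d)=\sum_{i=0}^{n}\binom{r-i+d-a_{i}}{r-i}$; after that it reduces all four limits to $\lim\alpha/P=1$ and $\lim\epsilon/(dP)=1$ and leaves the $C$, $\Delta$ bookkeeping implicit. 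You instead obtain $\deg p\leq r-2$ from dimension theory: $p(t+\gamma)$ is the Hilbert polynomial of $S'/J$ for the nonzero ideal $J\subset S'$ identified just before the lemma, so $\textup{Proj}(S'/J)$ lies in a hypersurface of $\mathbb{P}^{r-1}$. Since $\alpha$ has leading term exactly $\frac{\gamma}{(r-1)!}d^{r-1}$ and $P=\alpha+p$, the two pivots are equivalent, but your derivation is shorter and more conceptual, while the paper's yields the explicit leading coefficient \eqref{leadt}, which it reuses later in the Donaldson--Futaki computation; your route also makes explicit why running the Lemma~\ref{truncate1} argument in $S'$ needs the degree drop (choice of $\eta\in(\tfrac{r-2}{r-1},1)$), which the paper's citation of that proof glosses over, and your estimates $C\sim\frac{\gamma}{r!}d^{r}$, $\Delta=C^{2}(1+o(1))$, and the rationalization of $C-\sqrt{\Delta}$ are exactly the omitted bookkeeping, and they check out. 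Two quibbles, neither a gap: $J$ is generated by the lex-\emph{largest} (first), not smallest, monomials of degree $\delta$ in $S'$, though your argument only uses $J\neq 0$; and when $p\equiv 0$ the limit \eqref{e2} degenerates to $0/0$ and is vacuous, a defect of the lemma's statement that your proof shares with the paper's.
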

\begin{proof}
Note that (\ref{e2}) follows from the proof of lemma~\ref{truncate1}. \\It suffices to show that

\begin{displaymath}
\lim_{d\rightarrow\infty}\frac{\alpha(d)}{P(d)}=1 
\end{displaymath}
and 
\begin{displaymath}
\lim_{d\rightarrow\infty}\frac{\epsilon(d)}{dP(d)}=1.
\end{displaymath}
Recall that there is a lex-segment ideal $J$ generated by $J_{g_{P}}$, whose Hilbert polynomial is $Q_{P}$. Suppose that $\mu$ is the last monomial of $J_{g_{P}}$ with respect to the lexicographic order we fixed. We can write $\mu$ as follows:
\begin{displaymath}
\mu=x_{0}^{a_{0}-1}x_{1}^{a_{1}-a_{0}}\ldots x_{n}^{a_{n}-a_{n-1}+1}
\end{displaymath}
where
\begin{displaymath}
1=a_{-1}\leq a_{0}\leq a_{1}\leq\ldots\leq a_{n}=g_{P}
\end{displaymath}
and $n<r$. From the definition of $\gamma$, we can prove that $x_{0}^{\gamma}x_{r}^{d-\gamma}\leq_{\textup{lex}}x_{0}^{d-g_{P}}\mu<_{\textup{lex}}x_{0}^{\gamma+1}x_{r}^{d-\gamma-1}$. Therefore, $a_{0}=\gamma +1$. Recall that $Q_{P}$ has a Macaulay representation
\begin{displaymath}
Q_{P}(d)=\sum_{i=0}^{n}\binom{r-i+d-a_{i}}{r-i}.
\end{displaymath}

As a result, the leading term of $P(d)=\binom{r+d}{r}-Q_{P}(d)$ is
\begin{equation}
\label{leadt}
\frac{\gamma}{(r-1)!}d^{r-1}.
\end{equation}

Note that for all $d\geq g_{P}$,
\begin{displaymath}
\alpha(d)=\binom{r+d}{r}-\binom{r+\delta(d)}{r}=\sum_{i=1}^{\gamma}{\binom{r+d-\gamma+i-1}{r-1}}
\end{displaymath}
so that $\alpha$ and $P$ have the same leading term. This proves that $\lim_{d\rightarrow\infty}\alpha(d)/P(d)=1$. By the definition, $\epsilon(d)$ is a sum of $\gamma$ monic polynomials in $d$ of non-homogeneous degree $r$ so that $\lim_{d\rightarrow\infty}\epsilon(d)/(dP(d))=1$.
\end{proof}
\begin{definition}
Suppose that $p$ is a constant polynomial and $\gamma\neq 0$. , Let $D^{P}\in\mathbb{N}$ be the minimal integer satisfying $\Omega(d)>0$, $D^{P}\geq g_{P}$, $D^{P}\geq D_{p}+\gamma$ and
\begin{equation}
\label{region2}
0<\frac{C(d)-\sqrt{\Omega(d)}}{2}<1 < p(d)\delta(d)<\frac{C(d)+\sqrt{\Omega(d)}}{2}
\end{equation}
for all $d\geq D^{P}$. Such $D^{P}$ exists by Lemma~\ref{truncate2}. Let $b'(d)=\binom{r+\delta(d)-1}{r-1}-p$ and let $I(W')$ denote the graded ideal of $S$ generated by
\begin{displaymath}
x_{0}^{\gamma}\left(L\left(\binom{r+\delta(d)-1}{r}, \delta(d), S\right)+W'\right)
\end{displaymath}
for an arbitrary $W'\in Z_{\delta(d)}^{b'(d)}(S')$.
\end{definition}
\begin{lemma}
\label{Hpolynomial}
Suppose that $d\geq D^{P}$ and \eqref{goodsit} holds. For all $W'\in Z_{\delta(d)}^{b'(d)}(S')$, the Hilbert polynomial of $I(W')$ is $Q_{P}$.
\end{lemma}
\begin{proof}
It is clear that
\begin{equation}
\label{lexseg}
L\left(\binom{r+t-1}{r}, t, S\right)=\left\{f\in S_{t}|x_{0}\textup{ divides }f \right\}
\end{equation}
for all $t\in\mathbb{N}$ by the construction of $L(t, d, S)$. Also, 
\begin{displaymath}
I(W')_{d+1}\supset x_{0}^{\gamma}\left(L\left(\binom{r+\delta(d+1)-1}{r}, \delta(d+1), S\right)+(S'W')_{\delta(d)+1}\right)
\end{displaymath}
by the construction of $I(W')$. 

If $f\in x_{0}^{\gamma}L\left(\binom{r+\delta(d)-1}{r}, \delta(d), S\right)$, then  $x_{j}f\in x_{0}^{\gamma}L\left(\binom{r+\delta(d+1)-1}{r}, \delta(d+1), S\right)$ for all $0\leq j\leq r$ by \eqref{lexseg}. If $f\in x_{0}^{\gamma}W'$, then 
$x_{0}f\in x_{0}^{\gamma}L\left(\binom{r+\delta(d+1)-1}{r}, \delta(d+1), S\right)$ by \eqref{lexseg} and $x_{j}f\in (S'W')_{d+1}$ for all $1\leq j\leq r$ by the definition. Therefore,
\begin{displaymath}
I(W')_{d+1}= x_{0}^{\gamma}\left(L\left(\binom{r+\delta(d+1)-1}{r}, \delta(d+1), S\right)+(S'W')_{\delta(d)+1}\right).
\end{displaymath}
By \eqref{lexseg}, we see that
\begin{displaymath}
L\left(\binom{r+\delta(d+1)-1}{r}, \delta(d+1), S\right)\cap(S'W')_{\delta(d)+1}=\{0\}
\end{displaymath}
and
\begin{displaymath}
\dim_{k} L\left(\binom{r+\delta(d+1)-1}{r}, \delta(d+1), S\right)=\binom{r+\delta(d+1)-1}{r}.
\end{displaymath}
By Lemma~\ref{sharpness}, $\dim_{k}(S'W')_{\delta(d)+1}=b'(d+1)$ since $\delta(d)\geq D_{p}$ by the defining property of $D^{P}$. Therefore, 
\begin{displaymath}
\begin{split}
\dim_{k} I(W')_{d+1}&=\dim_{k}(S'W')_{\delta(d)+1}+\dim_{k} L\left(\binom{r+\delta(d+1)-1}{r}, \delta(d+1), S\right)\\
&=b'(d+1)+\binom{r+\delta(d+1)-1}{r}\\
&=\binom{r+\delta(d+1)-1}{r-1}+\binom{r+\delta(d+1)-1}{r}-p\\
&=\binom{r+\delta(d+1)}{r}-p\\
&=Q_{P}(d+1)
\end{split}
\end{displaymath}
by the construction of $p$. Similarly, we can show that $\dim_{k}I(W')_{d}=Q_{P}(d)$ by the definition of $I(W')$ and \eqref{lexseg}. Now we can use the Gotzmann persistence theorem \cite[Satz on p.61]{Gotzmann} to show that the Hilbert polynomial of $I(W')$ is equal to $Q_{P}$.
\end{proof}

\begin{theorem}
\label{opt2}
Suppose that the Hilbert polynomial $P$ satisfies \eqref{goodsit} for some $\gamma\neq 0$ and a natural number $p$. If $W\in \textup{Gr}(S_{d}, Q_{P}(d))$ is a worst unstable point of $\textup{Hilb}^{P}(\mathbb{P}^{r}_{k})$ with respect to $d\geq D^{P}$, then there are $g\in \textup{GL}_{r+1}(k)$ and $W'\in Z_{\delta(d)}^{b'(d)}(S')$ such that
\begin{displaymath}
g.W=I(W')
\end{displaymath}
and the ideal $S(g.W)$ of $S$ is Borel-fixed.
\end{theorem}
\begin{proof}
There is $g\in \textup{GL}_{r+1}(k)$ such that $g.W\in X_{d}^{P}(S)$ by Theorem~\ref{foundation}. Let $\log{g.W}=(c_{0}', c_{1}', \ldots, c_{r}')\in X(T)_{\mathbb{R}}$. After $g.W$ is acted by a suitable permutation matrix, we can assume that $c_{i}'\geq c_{i+1}'$ for all $0\leq i \leq r-1$. By Theorem~\ref{murai}, there is $m\in M_{\gamma}$ such that $m$ divides every monomial in $g.W$ because $g.W\in\textup{Hilb}^{P}(\mathbb{P}^{r}_{k})$. There is $V\in R_{\delta(d), Q_{P}(d)}(S)$ such that $g.W=mV$. If $m\neq x_{0}^{\gamma}$ then $|\Delta_{g.W}|<|\Delta_{x_{0}^{\gamma}V}|$ by the maximality of $c_{0}'$. Therefore $m= x_{0}^{\gamma}$. Let $\log{V^\star}=(c_{0}, c_{1}, \ldots, c_{r})$. We can derive a relation between $\log{g.W^\star}$ and $\log{V^\star}$ as follows:
\begin{displaymath}
\begin{split}
&\log{g.W^\star}\\
&=\log\left[ g.W^{\star}\cap  L\left(\binom{r+\delta(d)}{r},d, S\right)\right] +\log\left[ g.W^{\star}\cap A\left(\binom{r+\delta(d)}{r},d, S\right)\right]\\
&=\log\left[g.W^{\star}\cap x_{0}^{\gamma}S_{\delta(d)}\right]+\log {A\left(\binom{r+\delta(d)}{r},d, S\right)}\\
&=\log{x_{0}^{\gamma}V^{\star}}+\log{A\left(\binom{r+\delta(d)}{r},d, S\right)}\\
&=\log{V^\star}+\bigg(p\gamma +d\alpha(d) -\epsilon(d), \frac{\epsilon(d)}{r}, \ldots, \frac{\epsilon(d)}{r} \bigg).
\end{split}
\end{displaymath}
Thus, we can derive a relation between $\left\Vert \log{g.W^\star}\right\Vert_{\mathbb{R}}^{2}$ and $\left\Vert \log{V^\star}\right\Vert_{\mathbb{R}}^{2}$:
\begin{equation}
\label{relation}
\begin{split}
&\left\Vert \log{g.W^\star}\right\Vert_{\mathbb{R}}^{2}\\
&=\left\Vert \log{V^\star}\right\Vert_{\mathbb{R}}^{2}+2\left(p\gamma+d\alpha(d)-\epsilon(d)-\frac{\epsilon(d)}{r}\right)c_{0}\\
&\quad+2\frac{\epsilon(d)}{r}p\delta(d)+\left(p\gamma +d\alpha(d) -\epsilon(d)\right)^{2}+ \frac{[\epsilon(d)]^{2}}{r}.
\end{split}
\end{equation}
If $c_{0}=0$, then $V^{\star}\in Z^{p}_{\delta(d)}(S')$ by the maximality of $\Vert \log g.W^{\star}\Vert$ (which is given by \eqref{dual} and Lemma~\ref{Hpolynomial}) so that $W'$ mentioned in the statement exists by the construction of $I(W')$ and Lemma~\ref{duality}.
To prove that $c_{0}=0$, we can show that
\begin{displaymath}
2\bigg(\gamma p(d) +d\alpha(d) -\epsilon(d)-\frac{\epsilon(d)}{r} \bigg)c_{0}+\sum_{i=0}^{r}c_{i}^{2}-(p(d)\delta(d) -2e'(d))^{2}> 0
\end{displaymath}
first. We can prove that
\begin{displaymath}
\begin{split}
0&\leq \left\Vert \log{g.W^\star}\right\Vert_{\mathbb{R}}^{2}-\left\Vert\log\left[x_{0}^{\gamma}(W')^{\star}+A\left(\binom{r+\delta(d)}{r},d, S\right)\right]\right\Vert_{\mathbb{R}}^{2}\\
&=\sum_{i=0}^{r}c_{i}^{2}+2\bigg(p(d)\gamma +d\alpha(d) -\epsilon(d)-\frac{\epsilon(d)}{r} \bigg)c_{0}-\left\Vert\log{(W')^{\star}}\right\Vert_{\mathbb{R}}^{2}
\end{split}
\end{displaymath}
for all $W'\in Z_{\delta(d)}^{b'(d)}(S')$ from the maximality of $\Vert g.W^{\star}\Vert$ using Lemma~\ref{Hpolynomial} and that
\begin{displaymath}
\Vert\log{(W')^\star}\Vert^{2}_{\mathbb{R}}> (p(d)\delta(d)-2e'(d))^{2}
\end{displaymath}
for all $W'\in Z_{\delta(d)}^{b'(d)}(S')$ by Lemma~\ref{general1}. We have the inequality
\begin{displaymath}
\begin{split}
0&<2\bigg(p(d)\gamma +d\alpha(d) -\epsilon(d)-\frac{\epsilon(d)}{r} \bigg)c_{0}+\sum_{i=0}^{r}c_{i}^{2}-(p(d)\delta(d) -2e'(d))^{2}\\
&\leq 2c_{0}^{2}-2C(d)c_{0}+4p(d)\delta(d) e'(d)-4[e'(d)]^{2}.
\end{split}
\end{displaymath}
Being a quadratic inequality in $c_{0}$, the preceding inequality is equivalent to
\begin{displaymath}
\frac{C(d)-\sqrt{\Omega(d)}}{2}>c_{0} \quad\textrm{  or  }\quad c_{0}>\frac{C(d)+\sqrt{\Omega(d)}}{2}.
\end{displaymath}
By \eqref{region2}, the second condition is redundant because $c_{0}\leq p(d)\delta(d)$. That is, $c_{0}=0$ by \eqref{region2} again.\\
If $S(g.W)$ is not Borel-fixed, then there are $n_{1}\in N(g.W)$ and $n_{2}\in S_{d}\setminus N(g.W)$ such that $x_{i}n_{1}=x_{j}n_{2}$ for some $1\leq i< j\leq r$. The existence of $W'$, Lemma~\ref{opt1} and Lemma~\ref{duality} means that $i\neq 1$ and $j\neq r$. Therefore, consider the subspace $W''\in\textup{Gr}(S_{d}, Q_{P}(d))$ generated by $N(g.W)\cup \{n_{2}\}\setminus \{n_{1}\}$ satisfies $(SW'')_{t}=(S(g.W))_{t}$ for all $t\geq d$ by Theorem~\ref{sharpness}. That is, $W''\in \textup{Hilb}^{P}(\mathbb{P}^{r}_{k})$. Furthermore, $\Vert\log{W''}\Vert^{2}_{\mathbb{R}}>\Vert\log{g.W}\Vert^{2}_{\mathbb{R}}$ because $c_{i}\geq c_{j}$, which cannot happen by the maximality of $\Vert\log{g.W}\Vert_{\mathbb{R}}$.
\end{proof}
Theorem~\ref{constant} and Theorem~\ref{opt2} describes worst unstable Hilbert points when $d\gg 0$. Observe that these worst unstable Hilbert points remain unchanged for all but finitely many choices of $d$. We state this as a theorem.
\begin{theorem}
\label{unchanged}
If $P$ satisfies \eqref{goodsit}, then $X^{P}_{d}=X^{P}_{d'}$ for all $d, d'\gg 0$.
\end{theorem}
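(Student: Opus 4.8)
The plan is to read off $X^P_d$ from the explicit descriptions in Theorem~\ref{constant} and Theorem~\ref{opt2}, translate each worst unstable Hilbert point into the \emph{saturated} monomial ideal it represents, and then verify that this finite collection of saturated ideals does not move once $d$ is large. Throughout I identify $W\in X^P_d$ with the unique saturated ideal $I=(SW)^{\mathrm{sat}}$ having $I_d=W$ (possible since $d\geq g_P\geq\textup{reg }I$); under this identification the assertion $X^P_d=X^P_{d'}$ means that the two sets of saturated ideals coincide. Condition \eqref{goodsit} splits into two cases according to whether $\gamma=0$, so that $P\equiv p$ is constant, or $\gamma\geq 1$, and these are governed by Theorem~\ref{constant} and Theorem~\ref{opt2} respectively.

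First I would dispose of the constant case $\gamma=0$. Here $l(d)$ is the least integer with $\binom{r+l(d)}{r}\geq p$ and $\rho(d)=\binom{r+l(d)}{r}-p$; since $p$ is a fixed constant, both $l$ and $\rho$ are \emph{independent of $d$} for $d\geq g_P$, and hence so is the finite set $Z_{l}^{\rho}(S')$. By Theorem~\ref{constant} every element of $X^P_d$ is, up to the $d$-independent group $\textup{GL}_{r+1}(k)$, of the shape $x_0^{\,d-l}W'+A_{\binom{r+l}{r},d}(S)$ with $W'\in Z_{l}^{\rho}(S')$. Dehomogenizing by $x_0$ turns each such point into a length-$p$ Artinian quotient of $k[x_1/x_0,\ldots,x_r/x_0]$ determined by $W'$ alone, so the corresponding saturated ideal, a length-$p$ fat point at $[1:0:\cdots:0]$ and its $\textup{GL}$-translates, depends only on $W'$ and not on $d$. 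Thus the set of saturated ideals is constant in $d$.

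For $\gamma\geq1$ I would use Theorem~\ref{opt2}: up to $\textup{GL}_{r+1}(k)$ each worst unstable Hilbert point equals $x_0^{\gamma}\bigl(L_{\binom{r+\delta-1}{r},\delta}(S)+W'\bigr)=x_0^{\gamma+1}S_{\delta-1}\oplus x_0^{\gamma}W'$, where $\delta=d-\gamma$ and $W'\in Z_\delta^{\binom{r+\delta-1}{r-1}-p}(S')$. The monomials surviving in the quotient are exactly those of $x_0$-degree $\leq\gamma-1$ together with the $x_0^\gamma$-multiples of a complement of $W'$, which exhibits the scheme as the $(\gamma-1)$-st infinitesimal neighbourhood of the hyperplane $\{x_0=0\}$ carrying an extra length-$p$ piece at the point cut out by $W'$. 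The crucial observation is that $W'$ is precisely a worst unstable point of the Grassmannian $\textup{Gr}\bigl(\binom{r+\delta-1}{r-1}-p,\,S'_\delta\bigr)$ for the \emph{constant} polynomial $p$ in the $r$-variable ring $S'$; by Theorem~\ref{useless}, Theorem~\ref{constant} applied to $S'$, and Lemma~\ref{duality}, its associated fat point is again built from data ($l',\rho',e'$ and the bottom-level worst unstable spaces in $k[x_2,\ldots,x_r]$) that are constant in $\delta$ once $\delta$ is large. Hence the saturated ideal of $W'$ stabilizes, and since $\gamma$ is fixed and the reassembly $(x_0^{\gamma+1})+x_0^\gamma(S'W')$ into a saturated homogeneous ideal is manifestly $d$-independent, so is the saturated ideal of the original point. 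Finally, because the monomial representatives in $X^P_d$ differ from these Borel-fixed models only by the $d$-independent action of permutation matrices (Theorem~\ref{foundation}), the entire set $X^P_d$ is unchanged for $d\gg0$.

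The main obstacle I anticipate is the stabilization of the auxiliary datum $W'$: one must check that the recursion $S\rightsquigarrow S'=k[x_1,\ldots,x_r]\rightsquigarrow k[x_2,\ldots,x_r]\rightsquigarrow\cdots$ coming from the constant-polynomial case of Theorem~\ref{constant} terminates with combinatorial data ($l,\rho,l',e'$ and the finitely many monomials generating the bottom-level worst unstable spaces) that are \emph{eventually constant} in $d$, and that passing from the degree-$d$ truncation to its saturation reintroduces no $d$-dependence. This is exactly where the hypothesis that $p$ is constant, forced by \eqref{goodsit}, is used, since it is what makes $l$ and $\rho$ and their primed analogues independent of $d$.
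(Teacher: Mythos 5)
Your proposal is correct and follows essentially the same route as the paper: the paper states Theorem~\ref{unchanged} without a separate proof, presenting it as a direct observation from the canonical forms established in Theorem~\ref{constant} and Theorem~\ref{opt2}, which is exactly the observation you elaborate. Your identification of each element of $X^{P}_{d}$ with its saturated ideal, together with the check that all the defining data ($\gamma$, $l$, $\rho$, the fixed finite set $Z_{l}^{\rho}(S')$, and the stabilized set of $W'$ coming from the constant-polynomial case in $S'$) are independent of $d$ for $d\gg 0$, is precisely the intended content of that observation.
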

Compared with \cite[0.2.3]{VGIT} and \cite[Theorem 3.1.]{M3}, we can expect that this is true for an arbitrary Hesselink strata.

When $r=2$, it is easy to check that any worst unstable point of $\textup{Gr}(S'_{d}, b'(d))$ is in the $\textup{GL}_{r+1}(k)$-orbit of a lex-segment ideal. We state this as a corollary of Theorem~\ref{opt2}.
\begin{corollary}
\label{planecurve}
For an arbitrary Hilbert scheme $\textup{Hilb}^{P}(\mathbb{P}^{2}_{k})$ of 1-dimensional closed subschemes of $\mathbb{P}^{2}_{k}$, every worst unstable point of $\textup{Hilb}^{P}(\mathbb{P}^{2}_{k})$ is in $\textup{GL}_{r+1}(k)$-orbit of a lex-segment ideal.
\end{corollary}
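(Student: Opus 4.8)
The plan is to combine the structural description of worst unstable Hilbert points furnished by Theorem~\ref{opt2} (together with Theorem~\ref{constant} and Corollary~\ref{planepoint} in the constant case) with the elementary fact, asserted just before the statement, that in two variables every worst unstable point of a Grassmannian is a lex-segment. Since $r=2$, the expansion \eqref{goodsit} always holds and its associated $p$ is a genuine constant, so exactly one of two regimes occurs: $\gamma=0$, in which $P$ is constant, or $\gamma\geq 1$, in which $p$ is constant and Theorem~\ref{opt2} applies. In either regime the problem is pushed onto the subring $S'=k[x_1,x_2]$, so the first task is to settle the two-variable statement and the second is to reassemble the pieces into a lex-segment of $S$.

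First I would verify the two-variable claim. For a monomial subspace $W'\in R_{\delta,b}(S')$ the state point is $\mathbf{c}^{W'}=(0,c_1,c_2)$ with $c_1=\sum_{n\in N_{W'}}\deg_{x_1}n$ and $c_1+c_2=\delta b$ fixed, so that $|\Delta_{W'}|^2=c_1^2+(\delta b-c_1)^2$ is a convex function of the single variable $c_1$, symmetric about $\delta b/2$. Because each degree contributes exactly one monomial $x_1^ix_2^{\delta-i}$, the attainable values of $c_1$ range over an integer interval whose endpoints $\binom{b}{2}$ and $\delta b-\binom{b}{2}$ are symmetric about $\delta b/2$; by convexity the maximum of $|\Delta_{W'}|$ is attained exactly at these two endpoints, and each endpoint is realized by a unique monomial set, namely the $b$ monomials of largest (resp.\ smallest) $x_1$-degree. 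These are the lex-segment and its image under the transposition $x_1\leftrightarrow x_2$, whence $Z_\delta^b(S')$ is precisely the permutation orbit of a lex-segment ideal of $S'$.

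Then I would feed this into the ambient description. For $\gamma\geq 1$, Theorem~\ref{opt2} gives $g.W=x_0^\gamma\big(L_{\binom{r+\delta-1}{r},\delta}(S)+W'\big)$ with $W'\in Z_\delta^{\binom{r+\delta-1}{r-1}-p}(S')$ and $S(g.W)$ Borel-fixed. I would observe that $L_{\binom{r+\delta-1}{r},\delta}(S)=x_0S_{\delta-1}$ is the span of all degree-$\delta$ monomials divisible by $x_0$, so $g.W$ is spanned by the degree-$d$ monomials of $x_0$-degree at least $\gamma+1$ together with $x_0^\gamma N_{W'}$. Borel-fixedness of $S(g.W)$ forces the two-variable factor $W'$ to be Borel-fixed in $k[x_1,x_2]$, which in two variables means an honest lex-segment; absorbing the possible transposition $x_1\leftrightarrow x_2$ into $g$, I may take $W'$ to be the standard lex-segment. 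A short count via Pascal's identity, $\binom{\delta+1}{2}+\big((\delta+1)-p\big)=\binom{\delta+2}{2}-p=Q(d)$, together with the fact that the monomials of $x_0$-degree at least $\gamma+1$ form the initial lex-block and that $x_0^\gamma N_{W'}$ supplies the immediately following lex-monomials, identifies $N_{g.W}$ with the first $Q(d)$ monomials of $M_d$, i.e.\ $g.W=L_{Q(d),d}(S)$. Hence $W$ lies in the $\textup{GL}_3(k)$-orbit of the lex-segment ideal. For $\gamma=0$ the Hilbert polynomial is constant, and Corollary~\ref{planepoint} already places every worst unstable point in the orbit of the ideal generated by $A_{P(d),d}(S)$; since $A_{P(d),d}(S)$ is the span of the last $Q(d)$ lex-monomials, it is the lex-segment for the reversed order $x_2>x_1>x_0$, hence a permutation image of $L_{Q(d),d}(S)$ and again a lex-segment up to $\textup{GL}_3(k)$.

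The genuine content has already been carried by Theorem~\ref{opt2} and Theorem~\ref{constant}; what remains for the corollary is bookkeeping, and the only point requiring care is the reassembly step, namely confirming that the block $x_0S_{\delta-1}$ of $x_0$-divisible monomials occupies the initial lex-segment and that the two-variable lex-segment $W'$, scaled by $x_0^\gamma$, fills exactly the next lex-positions, so that no gap or overlap in lex order obstructs $N_{g.W}$ from being a true lex-segment. Once the dimension identity and the interleaving of $x_0$-degrees are checked, the conclusion is immediate, so I expect no serious obstacle beyond this combinatorial verification.
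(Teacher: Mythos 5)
Your argument for the case that the corollary actually covers, namely $\gamma\geq 1$ (a curve in $\mathbb{P}^{2}_{k}$ has Hilbert polynomial of degree one), is correct and is essentially the paper's own route: the paper asserts without proof that in two variables every worst unstable point of a Grassmannian is a lex-segment up to the group action, and then invokes Theorem~\ref{opt2}; you supply the convexity/endpoint argument for that assertion and the bookkeeping identifying $x_{0}^{\gamma}\bigl(x_{0}S_{\delta-1}+W'\bigr)$ with $L_{Q(d),d}(S)$, both of which are sound.

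However, your closing treatment of $\gamma=0$ is wrong, and you should delete it rather than rely on it. First, $A_{P(d),d}(S)$, the span of the $Q(d)$ lex-smallest monomials, is not the lex-segment for the reversed order $x_{2}>x_{1}>x_{0}$: the complement of a lex-segment is not a permuted lex-segment. For $d=3$, $P=3$, the set $N_{A_{3,3}(S)}$ omits $x_{0}^{2}x_{2}$ but contains $x_{0}x_{1}^{2}$, while the reversed-order lex-segment of size $7$ contains $x_{0}^{2}x_{2}$ and omits $x_{0}x_{1}^{2}$. Worse, the conclusion itself fails for constant $P$: the saturation of $S\,A_{3,3}(S)$ is $\langle x_{1},x_{2}\rangle^{2}$, whose zero scheme has a two-dimensional Zariski tangent space at its support point, whereas the lex-segment ideal with Hilbert polynomial $3$ is $\langle x_{0},x_{1}^{3}\rangle$, which defines a curvilinear scheme; these are not in the same $\textup{GL}_{3}(k)$-orbit, so worst unstable points of $\textup{Hilb}^{3}(\mathbb{P}^{2}_{k})$ are genuinely not in lex-segment orbits (consistent with Theorem~\ref{constant} and Corollary~\ref{planepoint}). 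This does not damage the corollary, which concerns only curves, but the $\gamma=0$ paragraph as written is a false claim rather than a harmless extra case.
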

\begin{proof}
When $r=2$, $S'=k[x_{1}, x_{2}]$ so that  
\begin{displaymath}
Z_{\delta(d)}^{b'(d)}(S')=\left\{\left[\bigwedge_{i=0}^{b'(d)-1}x_{1}^{i}x_{2}^{\delta(d)-i}\right], \left[\bigwedge_{i=0}^{b'(d)-1}x_{1}^{\delta(d)-i}x_{2}^{i}\right]\right\}
\end{displaymath}
by the definition of $Z_{\delta(d)}^{b(d)}(S')$. Whichever element of $Z_{\delta(d)}^{b(d)}(S')$ has been chosen as $W'$, $I(W')$ is equal to a lex-segment ideal after an action by a suitable permutation matrix is applied to $I(W')$. Now the claimed statement is clear by Theorem~\ref{opt2}.
\end{proof}

For all $d\geq g_{P}$, 
\begin{displaymath}
P(d)=\sum_{i=1}^{\gamma}\binom{r+d-i}{d-i+1}+\sum_{i=1}^{p}\binom{d-\gamma-i+1}{d-\gamma-i+1}
\end{displaymath}
if \eqref{goodsit} holds. This is the Macaulay representation of $P$ at $d$. Let us define
\begin{displaymath}
P_{0}(d)=\binom{r+d}{r}-\binom{r+d-\gamma}{r}=\sum_{i=1}^{\gamma}\binom{r+d-i}{d-i+1}\quad\textrm{and}\quad P_{1}=p=P-P_{0}.
\end{displaymath}

 As we have seen, $\gamma$ and $p$ can be computed from the Macaulay representation of $P$. Furthermore, we can decompose worst unstable point into other worst unstable points of {\it minor} Hilbert polynomials.
\begin{corollary}
Suppose that $P$ is in the form \eqref{goodsit} and $\gamma\neq 0$. For every worst unstable point $z$ of $\textup{Hilb}^{P}(\mathbb{P}^{r}_{k})$ and each $0\leq i\leq 1$ there is a worst unstable point $z_{i}$ of $\textup{Hilb}^{P_{i}}(\mathbb{P}^{r-i}_{k})$ satisfying the following conditions.
\begin{itemize}
\item The projective scheme represented by $z$ is a union of two closed subschemes $H$ and $c$ of $\mathbb{P}^{r}_{k}$, where $H$ is represented by $z_{0}$ and $c$ is a zero dimensional scheme.
\item The reduced scheme $H_{\textup{red}}$ associated to the scheme $H$ is isomorphic to $\mathbb{P}^{r-1}_{k}$. Furthermore, the corresponding closed immersion $H_{\textup{red}}\hookrightarrow H\hookrightarrow \mathbb{P}^{r}_{k}$ is a hyperplane embedding.
\item With above closed immersions, $c\cap H_{\textup{red}}$ is a closed subscheme of $H_{\textup{red}}\cong\mathbb{P}^{r-1}_{k}$, whose Hilbert polynomial is $P_{1}$. Also, $z_{1}$ represents $c\cap H_{\textup{red}}$.
\end{itemize}
\end{corollary}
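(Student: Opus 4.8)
The plan is to feed the normal form of Theorem~\ref{opt2} through the quotient $S\twoheadrightarrow S/(x_{0})=S'$ and read off the two \emph{minor} Hilbert points. First I would invoke Theorem~\ref{opt2} to choose $g\in\textup{GL}_{r+1}(k)$ so that, after replacing $z$ by $g.z$, the Borel-fixed ideal $I=S(g.z)$ satisfies $I_{d}=x_{0}^{\gamma}(L_{\binom{r+\delta-1}{r},\delta}(S)+W')$ with $W'\in Z_{\delta}^{\binom{r+\delta-1}{r-1}-p}(S')$. Since the first $\binom{r+\delta-1}{r}$ monomials of $S_{\delta}$ under $\leq_{\textup{lex}}$ are exactly those divisible by $x_{0}$, this reads $I_{d}=x_{0}^{\gamma+1}S_{\delta-1}\oplus x_{0}^{\gamma}W'$, equivalently $I=x_{0}^{\gamma}\tilde{I}$ for the ideal $\tilde{I}=S(x_{0}S_{\delta-1}+W')$. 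I would then define $z_{0}$ to be the point of $\textup{Hilb}^{P_{0}}(\mathbb{P}^{r}_{k})$ carrying the ideal $(x_{0}^{\gamma})$, whose degree-$d$ piece is $x_{0}^{\gamma}S_{\delta}$. Applying Theorem~\ref{opt2} (or Theorem~\ref{constant}) to $P_{0}$, which has the same $\gamma$ but constant part $0$, forces $W'_{0}=S'_{\delta}$ in its normal form and collapses it to $x_{0}^{\gamma}S_{\delta}$, identifying $(x_{0}^{\gamma})$ as a worst unstable point. Thus $H:=V(x_{0}^{\gamma})$ is represented by $z_{0}$, $H_{\textup{red}}=V(x_{0})\cong\mathbb{P}^{r-1}_{k}$, and $H_{\textup{red}}\hookrightarrow H\hookrightarrow\mathbb{P}^{r}_{k}$ is the hyperplane $\{x_{0}=0\}$, which settles the second bullet.

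Next I would extract $c$. Because $I_{d}\subseteq x_{0}^{\gamma}S_{\delta}$ and $I$ is generated in degree $d$, we get $I\subseteq(x_{0}^{\gamma})$, hence $H\subseteq X:=\textup{Proj}(S/I)$; and since $x_{0}^{d}=x_{0}^{\gamma+1}x_{0}^{\delta-1}\in I_{d}$, the reduced scheme is $X_{\textup{red}}=V(x_{0})=H_{\textup{red}}$. The residual structure appears in
\[
0\to (x_{0}^{\gamma})/I\to S/I\to S/(x_{0}^{\gamma})\to 0,
\]
where multiplication by the nonzerodivisor $x_{0}^{\gamma}$ identifies $(x_{0}^{\gamma})/I\cong (S/\tilde{I})(-\gamma)$. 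Setting $c:=\textup{Proj}(S/\tilde{I})$, I would check that $c$ is zero-dimensional of degree $p$: from $x_{0}^{\delta}\in\tilde{I}$ we get $\textup{Supp}(c)\subseteq V(x_{0})$, on which $c$ is cut out by $W'$ defining a zero-dimensional scheme (as $P_{1}=p$ is constant), and the exact sequence gives $\textup{HP}(S/\tilde{I})=\textup{HP}((x_{0}^{\gamma})/I)=P-P_{0}=p$. Since $I\subseteq\tilde{I}$ and $I\subseteq(x_{0}^{\gamma})$, both $H$ and $c$ are closed subschemes of $X$, and the exact sequence exhibits $X$ as the thickened hyperplane $H$ carrying the embedded zero-dimensional scheme $c$; this is the content of the first bullet.

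For the third bullet I would compute $c\cap H_{\textup{red}}=\textup{Proj}(S/(\tilde{I}+(x_{0})))$, regarded inside $V(x_{0})=\textup{Proj}(S')$. Its ideal is the image of $\tilde{I}$ under $S\twoheadrightarrow S'$: the generators $x_{0}m$ vanish while $W'\subseteq S'_{\delta}$ survives, so the image is $S'W'$ and $c\cap H_{\textup{red}}=\textup{Proj}(S'/S'W')$. Applying Theorem~\ref{useless} to the polynomial ring $S'$ (the coordinate ring of $\mathbb{P}^{r-1}_{k}$) with the constant polynomial $P_{1}=p$, the point $W'\in Z_{\delta}^{\binom{r+\delta-1}{r-1}-p}(S')$, being a worst unstable point of its Grassmannian, lies in $\textup{Hilb}^{p}(\mathbb{P}^{r-1}_{k})$ and hence is a worst unstable Hilbert point $z_{1}$, while $S'/S'W'$ has Hilbert polynomial $p=P_{1}$. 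Therefore $z_{1}$ represents $c\cap H_{\textup{red}}$, of Hilbert polynomial $P_{1}$, finishing the third bullet.

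The hard part will be making the word \emph{union} precise. Because $c\subseteq V(x_{0})=H_{\textup{red}}\subseteq H$ set-theoretically, the naive scheme-theoretic union $V((x_{0}^{\gamma})\cap\tilde{I}^{\textup{sat}})$ has Hilbert polynomial $P_{0}<P$, so the embedded length $p$ is not recovered by intersecting saturated ideals; the decomposition must instead be argued through the module $(x_{0}^{\gamma})/I\cong(S/\tilde{I})(-\gamma)$, viewing $c$ as an embedded rather than a transverse component, with careful bookkeeping of the saturations relating $I$, $\tilde{I}$ and $\tilde{I}^{\textup{sat}}$. A secondary point is threshold compatibility: one needs $d\geq D^{P_{0}}$ over $\mathbb{P}^{r}_{k}$ and $\delta=d-\gamma\geq D^{P_{1}}$ over $\mathbb{P}^{r-1}_{k}$, which is arranged by enlarging $D^{P}$ since all three bounds are asymptotic.
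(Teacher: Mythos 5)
Your proposal is correct and is essentially the paper's own proof: the paper likewise takes the normal form $g.W=x_{0}^{\gamma}(L_{\binom{r+\delta-1}{r},\delta}(S)+W')$ from Theorem~\ref{opt2}, sets $z_{0}=x_{0}^{\gamma}L_{\binom{r+\delta}{r},\delta}(S)$ (i.e.\ the ideal $(x_{0}^{\gamma})$), $z_{1}=W'$, $H=V(x_{0}^{\gamma})$, and $c$ the subscheme cut out by the ideal generated by $L_{\binom{r+\delta-1}{r},\delta}(S)+W'$, citing Theorem~\ref{opt2} for worst-unstability and dismissing everything else as ``clear.'' Your closing concern about the word \emph{union} is well-founded rather than a defect of your argument: since $c\subseteq H_{\textup{red}}\subseteq H$, the scheme-theoretic union of $H$ and $c$ is just $H$ (Hilbert polynomial $P_{0}\neq P$), so the first bullet can only be read through the embedded-component description $(x_{0}^{\gamma})/I\cong (S/\tilde{I})(-\gamma)$ that you supply — a point on which the paper's proof is silent.
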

\begin{proof}
The projective scheme represented by $x$ is defined by the ideal generated by $g.W$ in Theorem~\ref{opt2}, after applying a change of coordinates if necessary. Let
\begin{displaymath}
z_{0}=x_{0}^{\gamma}\left(L\left(\binom{r+\delta(d)}{r}, \delta(d), S\right)\right)\in R\left(d, \binom{r+\delta(d)}{r}, S\right)
\end{displaymath}
and $z_{1}=W'\in Z_{\delta(d)}^{b'(d)}(S')$. Then $H$ is defined by the graded ideal generated by $x_{0}^{\gamma}$. Let $c$ be the closed subscheme of $\mathbb{P}_{k}^{r}$ which is defined by the homogeneous ideal generated by $L\left(\binom{r+\delta(d)-1}{r}, \delta(d), S\right)+W'$. Theorem~\ref{opt2} means that $z_{i}$'s are worst unstable points. Then the remaining claims easily follow.
\end{proof}

\section{$K$-stability and Castelnuovo-Mumford regularity of worst unstable points}
Throughout this section, let's assume \eqref{goodsit}. For these Hilbert polynomials, we described worst unstable Hilbert points. In Theorem~\ref{Kinstability}, we will prove that a {\it worst unstable projective scheme} (that is, the projective scheme represented by a worst unstable Hilbert point) whose Hilbert polynomial is \eqref{goodsit} fails to be $K$-stable if $\gamma\neq 1$. Suppose that $I$ is a saturated monomial ideal of $S$ and $\lambda\in\Gamma(\textup{SL}_{r+1})$. Assume that $S/I$ has Hilbert polynomial $P$. Define
\begin{displaymath}
F_{I, \lambda}(d)=\frac{\langle\log{I_{d}^\star}, \lambda\otimes_{\mathbb{Z}}\mathbbm{1}\rangle_{\mathbb{R}}}{dP(d)}.
\end{displaymath}
Actually, $F_{I, \lambda}$ admits the expansion \cite[p.293]{Donaldson}
\begin{displaymath}
F_{I, \lambda}(d)=\sum_{i=0}^{\infty}B^{I, \lambda}_{i}\frac{1}{d^{i}}
\end{displaymath}
for sufficiently large $d$.
Let $X_{I}$ denote the projective scheme defined by the ideal sheaf $\tilde{I}$ of $\mathbb{P}^{r}_{k}$. The Donaldson-Futaki invariant of the polarized scheme $(X_{I}, \mathcal{O}_{X}(1))$ and 1-parameter subgroup $\lambda$ defined in \cite[p.294]{Donaldson} and \cite[Definition 6.]{GTian} is $B^{I, \lambda}_{1}$. 
\begin{theorem}
\label{Kinstability}
Suppose that a Hilbert polynomial $P$ satisfies \eqref{goodsit} and $\gamma\neq 1$. If $x$ is a worst unstable point of $\textup{Hilb}^{P}(\mathbb{P}_{k}^{r})$, then the projective scheme represented by $x$ is not $K$-stable.
\end{theorem}
\begin{proof}
Fix $\lambda_{r}=(r, -1, \ldots, -1)\in\Gamma(T_{r+1})\cong \mathbb{Z}^{r+1}$. If $I$ is the saturation of the ideal generated by $g.W$ in Theorem~\ref{constant}, we can compute that
\begin{displaymath}
\begin{split}
F_{I, -\lambda_{r}}(d)&=\frac{\langle\log{I_{d}^\star}, -\lambda_{r}\otimes_{\mathbb{Z}}\mathbbm{1}\rangle_{\mathbb{R}}}{dP(d)}\\
&=\frac{-rdP(d)+(r+1)e(d)}{dP(d)}\\
&=-r+\frac{(r+1)e(d)}{P(d)}\frac{1}{d}
\end{split}
\end{displaymath}
and functions $e$ and $P$ are constant for $d\geq g_{P}$ so that $B^{I, -\lambda_{r}}_{1}>0$. This means that $X_{I}$ is not $K$-stable in this case \cite[p. 294]{Donaldson}. If $I$ is the saturation of the ideal generated by $g.W$ in Theorem~\ref{opt2}, then
\begin{displaymath}
F_{I, \lambda_{r}}(d)=\frac{\langle\log{I_{d}^\star}, \lambda_{r}\otimes_{\mathbb{Z}}\mathbbm{1}\rangle_{\mathbb{R}}}{dP(d)}=\frac{rp(d)\gamma+rd\alpha(d)-(r+1)\epsilon(d)-p(d)\delta(d)}{dP(d)}.
\end{displaymath}
In the proof and the statement of Lemma~\ref{truncate2}, we have shown that
\begin{displaymath}
\lim_{d\rightarrow\infty}\frac{\alpha(d)}{P(d)}=1=\lim_{d\rightarrow\infty}\frac{\epsilon(d)}{dP(d)}
\end{displaymath}
so that
\begin{displaymath}
B_{0}^{I, \lambda_{r}}=\lim_{d\rightarrow\infty}{F_{I,\lambda_{r}}(d)}=-1
\end{displaymath}
and
\begin{displaymath}
\begin{split}
B_{1}^{I, \lambda_{r}}&=\lim_{d\rightarrow\infty}{d\left(F_{I,\lambda_{r}}(d)-B_{0}^{I, \lambda_{r}}\right)}\\
&=\lim_{d\rightarrow\infty}\frac{rp(d)\gamma+r(d\alpha-dP(d))-(r+1)(\epsilon(d)-dP(d))-p(d)\delta(d)}{P(d)}\\
&=\lim_{d\rightarrow\infty}\frac{(r+1)p(d)\gamma-(r+1)(\epsilon(d)-d\alpha(d))}{P(d)}\\
&=\lim_{d\rightarrow\infty} \frac{(r+1)p(d)\gamma}{P(d)}+\frac{r+1}{P(d)}\sum_{i=1}^{\gamma}\binom{r+i+d-\gamma-1}{r-1}(\gamma-i).
\end{split}
\end{displaymath}
Recall that the leading term of $P$ is \eqref{leadt} so that
\begin{displaymath}
B_{1}^{I, \lambda_{r}}=\frac{(r+1)(\gamma-1)}{2}> 0
\end{displaymath}
if $\gamma\neq 1$. Such a conclusion means that the projective scheme represented by $x$ is not $K$-stable.
\end{proof}
As pointed out in the introduction, it is natural to ask if there is a positive relation between Castelnuovo-Mumford regularity and GIT stability. We will simply compute the regularities of each worst unstable Hilbert points we have described in this paper and compare it with $g_{P}$, the sharp upper bound for the regularity of an arbitrary Hilbert point in $\textup{Hilb}^{P}(\mathbb{P}^{r}_{k})$ \cite{Gotzmann}.

If $p\neq 0$ and $\gamma=0$, let $l_{P}$ be an integer satisfying
\begin{displaymath}
\sum_{0\leq i\leq l_{P}-1}{\binom{r+i-1}{r-1}} < p \leq \sum_{0\leq i\leq l_{P}}{\binom{r+i-1}{r-1}}.
\end{displaymath}
If $p\neq 0$ and $\gamma\neq 0$, let $l_{P}$ be an integer satisfying
\begin{displaymath}
\sum_{0\leq i\leq l_{P}-1}{\binom{r+i-2}{r-2}} < p \leq \sum_{0\leq i\leq l_{P}}{\binom{r+i-2}{r-2}}.
\end{displaymath}
If $p=0$, let $l_{P}=-1$.
Now we are ready to compute regularity of worst unstable points in $\textup{Hilb}^{P}(\mathbb{P}^{r}_{k})$.
\begin{theorem}
\label{regularity}
Given a Hilbert polynomial $P$ satisfying \eqref{goodsit}, the Castelnuovo-Mumford regularity of the ideal sheaf on $\mathbb{P}_{k}^{r}$ corresponding to $x$ is $l_{P}+\gamma+1$ for every worst unstable point $x$ of $\textup{Hilb}^{P}(\mathbb{P}^{r}_{k})$.
\end{theorem}
\begin{proof}
Note that there is $g\in \textup{GL}_{r+1}(k)$ such that the saturated ideal represented by $g.x$ has a minimal generator of maximal degree $l_{P}+\gamma+1$ and is Borel-fixed by Theorem~\ref{opt2}, Lemma~\ref{opt1} and Lemma~\ref{duality}. By \cite[Proposition 2.11 in the part of Mark L. Green]{Elias}, the regularity of $x$ is $l_{P}+\gamma+1$.
\end{proof}
Consider the lex-segment ideal $I$ generated by
\begin{displaymath}
\{m\in M_{\gamma+p}| x_{0}^{\gamma}x_{r-1}^{p}\leq_{\textup{lex}} m \}.
\end{displaymath}
For all $d\geq \gamma+p$, we can derive
\begin{displaymath}
I_{d}=\left[\bigoplus_{i=0}^{r-2} x_{0}^{\gamma}x_{i}k[x_{i}, x_{i+1}, \ldots, x_{r}]_{d-\gamma-1}\right] \oplus x_{0}^{\gamma}x_{r-1}^{p}k[x_{r-1}, x_{r}]_{d-p-\gamma}
\end{displaymath}
so that
\begin{displaymath}
Q_{P}(d)=\dim_{k}I_{d}=\sum_{i=0}^{r-2}\binom{r-i+d-\gamma-1}{r-i}+\binom{r-(r-1)+d-p-\gamma}{1}.
\end{displaymath}
This implies that $g_{P}=p+\gamma$ by \cite[(2.9)]{Gotzmann}. On the other hand,
\begin{displaymath}
\begin{split}
P(d)&=\dim_{k}(S/I)_{d}\\
&=\left\vert\{m\in M_{d}\vert m<_{\textup{lex}}x_{0}^{\gamma}x_{r-1}^{p}x_{r}^{d-\gamma-p}  \}\right\vert\\
&=\left\vert\{m\in M_{d}\vert x_{0}^{\gamma}x_{r}^{d-\gamma}\leq_{\textup{lex}} m<_{\textup{lex}}x_{0}^{\gamma}x_{r-1}^{p}x_{r}^{d-\gamma-p}  \}\right\vert\\
&\quad + \left\vert\{m\in M_{d}\vert m<_{\textup{lex}}x_{0}^{\gamma}x_{r}^{d-\gamma}  \}\right\vert\\
&=p+\binom{r+d}{r}-\binom{r+d-\gamma}{r},
\end{split}
\end{displaymath}
for all $d\geq g_{P}$. Therefore, $g_{P}=\gamma+p$ if \eqref{goodsit} holds. If $g_{P}=l_{P}+\gamma+1$ and $r\geq 2$, then one of the following  statements is true.
\begin{itemize}
\item $r=2$ and $\gamma\neq 0$.
\item $p\leq 2$.
\end{itemize}
In other words, there is no general positive relationship between the Castelnuovo-Mumford regularity and the Kempf index of a Hilbert point. However, we can still expect that there is a positive relationship between the Kempf index and the Castelnuovo-Mumford regularity of 1-dimensional projective scheme by the first bullet above.
\bibliographystyle{acm}
\bibliography{gotzmann}

\begin{thebibliography}{10}

\bibitem{Bayer}
{\sc Bayer, D., and Morrison, I.}
\newblock Standard bases and geometric invariant theory. {I}. {I}nitial ideals
  and state polytopes.
\newblock {\em J. Symbolic Comput. 6}, 2-3 (1988), 209--217.
\newblock Computational aspects of commutative algebra.

\bibitem{VGIT}
{\sc Dolgachev, I.~V., and Hu, Y.}
\newblock Variation of geometric invariant theory quotients.
\newblock {\em Inst. Hautes \'Etudes Sci. Publ. Math.}, 87 (1998), 5--56.
\newblock With an appendix by Nicolas Ressayre.

\bibitem{Donaldson}
{\sc Donaldson, S.~K.}
\newblock Scalar curvature and stability of toric varieties.
\newblock {\em J. Differential Geom. 62}, 2 (2002), 289--349.

\bibitem{Eisenbudf}
{\sc Eisenbud, D.}
\newblock {\em Commutative algebra}, vol.~150 of {\em Graduate Texts in
  Mathematics}.
\newblock Springer-Verlag, New York, 1995.
\newblock With a view toward algebraic geometry.

\bibitem{Eisenbud}
{\sc Eisenbud, D.}
\newblock {\em The geometry of syzygies}, vol.~229 of {\em Graduate Texts in
  Mathematics}.
\newblock Springer-Verlag, New York, 2005.
\newblock A second course in commutative algebra and algebraic geometry.

\bibitem{Elias}
{\sc Elias, J., Giral, J.~M., Mir{\'o}-Roig, R.~M., and Zarzuela, S.}, Eds.
\newblock {\em Six lectures on commutative algebra}.
\newblock Modern Birkh\"auser Classics. Birkh\"auser Verlag, Basel, 2010.
\newblock Papers from the Summer School on Commutative Algebra held in
  Bellaterra, July 16--26, 1996, Reprint of the 1998 edition.

\bibitem{Gotzmann}
{\sc Gotzmann, G.}
\newblock Eine {B}edingung f\"ur die {F}lachheit und das {H}ilbertpolynom eines
  graduierten {R}inges.
\newblock {\em Math. Z. 158}, 1 (1978), 61--70.

\bibitem{Hyeon}
{\sc Hassett, B., and Hyeon, D.}
\newblock Log minimal model program for the moduli space of stable curves: the
  first flip.
\newblock {\em Ann. of Math. (2) 177}, 3 (2013), 911--968.

\bibitem{Hesselink}
{\sc Hesselink, W.~H.}
\newblock Uniform instability in reductive groups.
\newblock {\em J. Reine Angew. Math. 303/304\/} (1978), 74--96.

\bibitem{Hoskins}
{\sc Hoskins, V.}
\newblock Stratifications associated to reductive group actions on affine
  spaces.
\newblock {\em Q. J. Math. 65}, 3 (2014), 1011--1047.

\bibitem{Kempf}
{\sc Kempf, G.~R.}
\newblock Instability in invariant theory.
\newblock {\em Ann. of Math. (2) 108}, 2 (1978), 299--316.

\bibitem{M3}
{\sc Lee, C.}
\newblock Instability and singularity of projective hypersurfaces.
\newblock {\em Proc. Amer. Math. Soc. 146}, 12 (2018), 5015--5023.

\bibitem{Ian}
{\sc Morrison, I., and Swinarski, D.}
\newblock Gr\"obner techniques for low-degree {H}ilbert stability.
\newblock {\em Exp. Math. 20}, 1 (2011), 34--56.

\bibitem{Mukai}
{\sc Mukai, S.}
\newblock {\em An introduction to invariants and moduli}, vol.~81 of {\em
  Cambridge Studies in Advanced Mathematics}.
\newblock Cambridge University Press, Cambridge, 2003.
\newblock Translated from the 1998 and 2000 Japanese editions by W. M. Oxbury.

\bibitem{GIT}
{\sc Mumford, D., Fogarty, J., and Kirwan, F.}
\newblock {\em Geometric invariant theory}, third~ed., vol.~34 of {\em
  Ergebnisse der Mathematik und ihrer Grenzgebiete (2) [Results in Mathematics
  and Related Areas (2)]}.
\newblock Springer-Verlag, Berlin, 1994.

\bibitem{Murai}
{\sc Murai, S.}
\newblock Gotzmann monomial ideals.
\newblock {\em Illinois J. Math. 51}, 3 (2007), 843--852.

\bibitem{GTian}
{\sc {Paul}, S.~T., and {Tian}, G.}
\newblock {CM Stability and the Generalized Futaki Invariant I}.
\newblock {\em ArXiv Mathematics e-prints\/} (May 2006).

\end{thebibliography}
\end{document}